\theoremstyle{theorem}
\newtheorem{theorem}{Theorem}[section]
\newtheorem{proposition}[theorem]{Proposition}
\newtheorem{corollary}[theorem]{Corollary}
\theoremstyle{definition}
\newtheorem{definition}[theorem]{Definition}
\newtheorem{remark}[theorem]{Remark}
	\newif\ifSuppressEndOfProva\SuppressEndOfProvafalse
\newcommand{\defin}{\ensuremath{~\stackrel{\text{{\tiny def }}}{=}~}}
\newcommand{\noi}{\noindent}
\newcommand{\cpl}{\text{\it CPL}}
\newcommand{\lfis}{\textit{LFI}s}
\newcommand{\lfi}{\textit{LFI}}
\newcommand{\letfp}{\ensuremath{LET_{F}^+}}
\newcommand{\letf}{$LET_{F}$}
\newcommand{\letk}{\ensuremath{LET_{K}}}
\newcommand{\letj}{\ensuremath{LET_{J}}}
\newcommand{\letkp}{\ensuremath{LET_{K}^+}}
\newcommand{\letfm}{\ensuremath{LET_{F}^-}}
\newcommand{\fde}{\ensuremath{FDE}}
\newcommand{\fdeto}{\ensuremath{FDE^{\to}}}
\newcommand{\bo}{\textsf{b}}
\newcommand{\nei}{\textsf{n}}
\newcommand{\nel}{\textit{N4}}
\newcommand{\lets}{\textit{LET}s}
\newcommand{\imp}{$\rightarrow$}
\newcommand{\cons}{\ensuremath{{\circ}}}
\newcommand{\con}{\ensuremath{{\circ}}}
\newcommand{\toot}{\leftrightarrow}
\newcommand{\wneg}{\ensuremath{\lnot}}
\newcommand{\sneg}{\ensuremath{{\sim}}}
\newcommand{\defi}{\stackrel{\text{\tiny def}}{=} }
\newcommand{\conj}{$\land$}
\newcommand{\disj}{$\lor$}
\newcommand{\mh}{\noindent}
\newcommand{\m}{\vspace{1mm}}
\newcommand{\mm}{\vspace{2mm}}
\newcommand{\mmm}{\vspace{3mm}}
\newcommand{\mmmm}{\vspace{4mm}}
\newcommand{\setl}{\setlength\itemsep{-0.2em}}
\begin{document}

\title{\textbf{On  six-valued logics of evidence and truth expanding Belnap-Dunn four-valued logic} 
\thanks{The definitive version of this article will appear in {\em Studia Logica} as ``From Belnap-Dunn four-valued logic to six-valued logics of evidence and truth''.}
}

\author{Marcelo E. Coniglio$^1$ and Abilio Rodrigues$^2$ \\
[4mm]
$^1$Institute of Philosophy and the Humanities (IFCH), and\\
Centre for Logic, Epistemology and the History of Science (CLE)\\
University of Campinas (UNICAMP) \\
coniglio@unicamp.br\\
$^2$Department of Philosophy\\
Federal University of Minas Gerais (UFMG) \\ abilio.rodrigues@gmail.com \\ }
 
\date{}

\maketitle

\abstract{The main aim of this paper is to introduce the logics of evidence and truth \letkp\ and \letfp\ together with a sound, complete, and decidable  
six-valued deterministic semantics for them. 
These logics extend the logics \letk\ and \letfm\  with rules 
of propagation of classicality, which are  inferences   
that express how the classicality operator \con\ is transmitted from less complex to more complex sentences, and vice-versa. 
The six-valued semantics here proposed  extends the 4 values of Belnap-Dunn logic  with 2 more values that intend to represent   
(positive and negative) reliable information. 
A six-valued non-deterministic semantics for \letk\ is obtained by means of Nmatrices based on swap structures, and the six-valued semantics for \letkp\  is then obtained by 
imposing restrictions on the semantics of \letk. These restrictions correspond exactly to the rules of propagation of classicality that extend  \letk. The logic \letfp\ is obtained as the  implication-free fragment of \letkp. We also show that the 6 values of \letkp\ and \letfp\ define a lattice structure that extends the lattice \textbf{L4} defined by the Belnap-Dunn four-valued logic with the 2  
additional  values mentioned above, intuitively interpreted as positive and negative reliable information. 
 Finally, we also show that \letkp\ is Blok-Pigozzi algebraizable and that its implication-free fragment \letfp\ coincides with the degree-preserving logic of the involutive Stone algebras. }

\maketitle

\section*{Introduction} \label{sec:introduction}

Logics of evidence and truth  (\lets) are paradefinite (i.e.~paraconsistent and paracomplete)  logics that  extend  the logic of first-degree entailment (\fde), also known as Belnap-Dunn four-valued logic   \cite{belnap.1977.how, belnap.1977.useful,  dunn76}, with a unary operator \con\ that
recovers excluded middle and explosion  for sentences in its scope by means of 
the following inferences: 
\begin{enumerate}
\item $\con A, A, \neg A \vdash B$,   \label{prop.exp.con}
\item $\con A \vdash   A \lor  \neg A $. \label{prop.pem.con}  
\end{enumerate}  
\mh    \lets\  stemmed from the logics of formal inconsistency 
    (\lfis, see e.g.~\cite{car.con.2016.book,car.con.mar.2007}), which, in turn, are a further development of the seminal work of da Costa  on paraconsistency (see e.g. \cite{costa1963,costa1974}).     
    In the latter, a sentence $\con A$ means that $A$ is `well-behaved';  in \lfis, that $A$ is `consistent'.\footnote{In da 
    Costa's work, the notation is a bit different, it is written as $A^\con$     instead of $\con A$. 
    The notation $\con A$ has been introduced  by Carnielli and Marcos  in \citep{car.mar.2002.tax}. } 
 \lets, however,  have been conceived with a specific purpose, viz., 
 to formalize the deductive behavior of positive and negative evidence, 
which   can be conclusive or non-conclusive (see e.g. \citep{qletf,letj,rod.ant.lu,letf}). 
    Thus, according to the intended intuitive interpretation,  a sentence  
$\con A $ means that there is conclusive evidence for $A$. It is assumed that sentences for which the evidence 
available is considered conclusive behave classically, and \con\ is called a classicality operator. 
 \lets\  can    also be interpreted as   information-based logics, which are     
     logics suitable for processing
information in the sense of taking a database as a set of premises and drawing
conclusions from these premises in a sensible way. 
 In this case, a
sentence $\con A$  means that the information $A$, positive or negative,
is reliable. 

The inferences \ref{prop.exp.con} and  \ref{prop.pem.con} above added to \fde\ define the logic 
\letfm, which is a sort of minimal logic of evidence and truth.  Kripke models for a  first-order version of \letfm\ 
have been investigated in \citep{rod.ant.lu}.   
The logic \letk, introduced in \citep{kolkata}, extends  the 
logic \fdeto\ with \ref{prop.exp.con} and  \ref{prop.pem.con}. \fdeto\  is \fde\ with a classical implication \citep{hazen, omo.wan.2017}. 
Sound and complete non-deterministic  valuation semantics for sentential \lets\ 
 can be found in \citep{kolkata, letj, letf}, and Kripke-style   semantics in \citep{axioms, handbook.2022}. 
It can be proven that none of the \lets\ studied so far  can be characterized by a single finite logical matrix -- 
that is, they are not  deterministically  finitely-valued. 
  Our main aim here is to obtain a finite-valued non-deterministic semantics for \letk, as well as to introduce the sentential logics \letkp\ and \letfp\ together with a sound, complete, and decidable  
six-valued semantics for them. 
These logics extend, respectively, \letk\ and \letfm\ with  rules 
 of propagation of clasicality, which are  inference rules 
that express how the 
operator \con\ is transmitted from less complex to more complex sentences, and vice-versa. 

\mm

The remainder  of this paper is structured as follows. 
In Section~\ref{sec.lets.fde} we explain the motivation for extending \fde\ with the operator \con\ and the respective 
rules, adding thus two more scenarios to the four scenarios of \fde.\footnote{Parts of Section 
\ref{sec.lets.fde} have already appeared in \citep{rod.ant.lu}. } 
In Section~\ref{sec.letk}, a natural deduction  system for \letk\ is presented, together 
with a sound, complete, and decidable  six-valued semantics based on Nmatrices. 
Section~\ref{sec.letkp} introduces the  logic \letkp\  together 
with a sound, complete, and decidable \textit{deterministic} six-valued semantics.  
The latter is obtained by means of restrictions imposed on the non-deterministic six-valued 
semantics for \letk, and such restrictions correspond exactly to the rules of 
propagation of classicality added to \letk. In this section we also show 
 that \letkp\ is algebraizable in the   sense of  
 Blok and  Pigozzi  \cite{blok:pig:89}. 
  In Section~\ref{sect-lattice}  
 we will show  how  
 the underlying algebra of the  logical matrices of \letkp,   presented as a three-dimensional twist-structure,  
can be  generalized to twist-algebras based on arbitrary Boolean algebras.  
 In this section we also discuss the lattice  \textbf{L6} and the semi-lattice \textbf{A6} as expansions of the 
well known  lattices \textbf{L4} and \textbf{A4}, defined by the the four values of \fde\ in Belnap \cite{belnap.1977.how}. 
 In Section~\ref{sec.letfp}  we turn to the logic  \letfp, and show how the results obtained up to that point 
 can be extended to \letfp. This analysis will raise an interesting relation between \letfp\  and the degree-preserving logic associated with  a variety of algebras known as involutive Stone algebras.

\section{Six scenarios instead of four} 
\label{sec.lets.fde}

Belnap in  \citep{belnap.1977.how} introduced a four-valued semantics for \fde\ designed to 
 represent the information stored in a possibly inconsistent and incomplete database.  
 The semantic values of \fde, represented here by $T_0$, $F_0$, $\bo$, and $\nei$,  
   allow the following four scenarios to be expressed with respect to a given sentence $A$: 

\begin{enumerate}[resume]
 \item[(i)] $v(A) = T_0$: $A$ holds and $\neg A$ does not hold (only positive information $A$); 
 \item[(ii)]  $v(A) = F_0$: $\neg A$ holds and $A$ does not hold (only negative information $A$); 
  \item[(iii)]  $v(A) = \bo$: both $A$ and $\neg A$ hold (contradictory information about $A$);  
  \item[(iv)]  $v(A) = \nei$: neither $A$ nor $\neg A$ holds (no information about $A$).\footnote{Note that the 
   semantics introduced by  
Dunn \citep{dunn76} in terms of subsets of  $\{ 0,1\} $ contains essentially the same idea. Although the interpretation in terms of information was spelled out by 
Belnap in \citep{belnap.1977.how}, the corresponding conceptual and technical ideas were worked out by Dunn as well.  
On  the historical background of \fde\ and its interpretation in terms of information, from Dunn's dissertation \cite{dunn66} to the papers published in the 1970s \cite{belnap.1977.how, belnap.1977.useful,  dunn76},  see Dunn \cite{dunn2019}.} 
\end{enumerate}

As already said,   \lets\   admit an interpretation 
in terms of positive and negative information, and in this case 
 $\con A$  means that the information $A$, positive or negative,
is reliable,  
 and when $\con A$ does not hold, it means 
 that there is no reliable information about $A$.\footnote{For a more detailed discussion of the notions of evidence and 
 information,  see \citep{rod.car.llp}. Concerning the intuitive interpretation of \lets\ in terms of information, see 
\citep{rod.ant.lu}.} 
     As a consequence,  \lets\ are  
    are able to express  six scenarios with respect to a sentence $A$: (i) to (iv) above when $\con A$ does not hold, 
    plus the following two, represented here by the semantic values $T$ and $F$: 

\begin{enumerate}[resume]
 \item[(v)] $v(A) = T$: $\con A$ and $A$ hold: reliable information that $A$ is true;  
 \item[(vi)] $v(A) = F$: $\con A$ and $\neg A$ hold: reliable information that $A$ is false.
 \end{enumerate}

\noi 
\lets\  thus establish a distinction that cannot be established within \fde, 
 for when the values $T_0$  or  $F_0$ are assigned to $A$  in \fde\ this does not specify  
 whether such information is or is not reliable, which is precisely the
difference between scenarios (i) and (v), and (ii) and (vi) above.

\section{The logic \letk}    \label{sec.letk}

We start with  the logic \letk, which extends the logic \fdeto\ with the operator \con\  and 
 the respective  rules. 
\fdeto\  is \fde\  with a classical implication added, and  can also be defined just by adding $A\lor (A\to B)$ to Nelson's logic \nel. 
 From the semantical point of view, \fdeto\ admits a four-valued semantics 
that extends in a natural way the semantics of \fde\ (see \cite[p.~1036]{omo.wan.2017}). 
{As far as we know, the logic \fdeto\ 
appeared for the first time in Pynko \cite[p.~70]{pynko.1999}, under the name $\mathcal{IDM}4$. It 
 appears in  Hazen et al. \cite[p.~165]{hazen} under the name \fdeto, which we adopt here. }

As already mentioned in the Introduction, \letk\ is not (deterministically) finitely-valued.  It admits a Kripke-style semantics \cite{handbook.2022} and a valuation semantics  \citep{kolkata}.\footnote{Valuation semantics are  bivalued and possibly non-deterministic semantics introduced and investigated in the 1970s onwards by da Costa, Loparic, and Alves (see e.g. \cite{alves.1976,costa1974,loparic.1986,loparic.alves,loparic.costa.1984}).     
The underlying idea of valuation semantics is to `mirror' the axioms and rules in terms of the semantic values 1 and 0. 
  On the historical and conceptual features of valuation semantics, see  \cite[Sect.~6]{qletf}.}
 Since \letk\ is paradefinite, the semantic values of literals $p$ and $\neg p$ are totally independent of each other, 
which is precisely the point of the four scenarios of \fde.  

 In what follows we will see a natural deduction system for \letk, 
 together with an adequate valuation semantics.  
 Then we will propose a six-valued semantics for \letk, based on Nmatrices,  
 with the semantic values $T, T_0, \bo, \nei, F_0, \mbox{ and } F$,  which 
  correspond to the six scenarios expressed by the \lets\   just seen above.  
 We will see that the six-valued semantics combines these six scenarios 
in a sensible way but,  as expected, it  is non-deterministic.  

From here on, consider a denumerable set $\mathcal{V}$ of propositional variables, and let $For(\Theta)$ be the algebra of formulas freely generated by $\mathcal{V}$ over a propositional signature $\Theta$.

 \begin{definition} \label{def.ND.letk} 
Consider the propositional signature  $\Sigma=\{\land,\lor,\to,\neg,\cons\}$.
A natural deduction system 
over $\Sigma$ for the logic \letk\ is given by the following inference rules:

\mm

\begin{center}
 $\infer [{\land I}] {A \land B} {A & B}$ \hspace{1.6cm}
$\infer [{\land E}]  {A} {A \land B}  \hspace{2 mm} \infer []  {B} {A \land B}  $ \hspace{6mm}

\

$\infer [{\lor I}] {A \lor B} {A} \hspace{2 mm} \infer [] {A \lor B} {B}$ \hspace{1cm}
$\infer [{\lor E}] {C} {A \lor B & \infer*{C} {[A]} & \infer*{C} {[B]}}$\hspace{6mm}

\

$\infer [{\neg {\land} I}] {\neg(A \land B)} {\neg A } \hspace{2 mm} \infer [] {\neg(A \land B)} {\neg B}$\hspace{1cm}
$\infer [{\neg {\land} E}] {C} {\neg(A \land B) & \infer*{C} {[\neg A]} & \infer*{C} {[\neg B]}}$\hspace{6mm}

\

$\infer [{\neg {\lor} I}] {\neg(A \lor B)} {\neg A & \neg B}$ \hspace{1.2cm}
$\infer [{\neg {\lor} E}]  {\neg A} {\neg (A \lor B)}  \hspace{2 mm} \infer []  {\neg B} {\neg (A \lor B)}  $ \hspace{1mm}

\ 

$\infer [{DN}] {\neg \neg A} {A}  \hspace{2 mm} \infer [] {A} {\neg \neg A} $

\

		$\infer [{{\rightarrow} I}] {A \rightarrow B} {\infer*{B} {[A]}}$ \hspace{1cm}
		$\infer [\rightarrow E] {B} {A \rightarrow B & A}$ \hspace{7mm}
		$\infer [{\to_{CL}}] {A \lor (A \to B)} {} $

		\

		\m

		$\infer [{\neg {\rightarrow} I}] {\neg(A \rightarrow B)} {A & \neg B}$\hspace{1.5cm}
		$\infer [{\neg {\rightarrow E}}] {A} {\neg (A \rightarrow B)} \hspace{2 mm} \infer [] {\neg B} {\neg (A \rightarrow B)}$

\

$\infer [{EXP^{\circ}}] {B} {\circ A & A & \neg A }$ \hspace{10 mm} 
 $\infer [{PEM^\con}] {A \vee \neg A} {\con A}$\hspace{6mm}

\end{center}

\m\mh A deduction of  $A$ from a set of premises $\Gamma$ is defined as usual  for natural deduction systems. We write $\Gamma\vdash_{\letk} A$ to denote that there is one of such deductions.

\m \mh
The rules $\lor E$, $\neg {\land} E$,  and ${\rightarrow} I$ are called {\em improper}, while the others are {\em proper}.  The axiom $\to_{CL}$ is neither a proper nor an improper inference rule. 

\end{definition}

\subsection{Valuation semantics for \letk} \label{sec.bivalued.letk}

A sound and complete bivalued non-deterministic  semantics (a valuation semantics) for \letk\ is given below (cf. \cite[Sects.~3.3.1-2]{kolkata}):

\begin{definition}  \label{def-val-LETK} 
A  {\em bivaluation} for \letk\ is a function $\rho:For(\Sigma) \to \{0,1\}$ satisfying the following properties:\\[1mm]
$\begin{array}{ll}
\mbox{(v1)} & \mbox{$\rho(A \land B)=1$ iff $\rho(A)=1$ and $\rho(B)=1$};\\[1mm]
\mbox{(v2)} & \mbox{$\rho(A \lor B)=1$ iff $\rho(A)=1$ or $\rho(B)=1$};\\[1mm]
\mbox{(v3)} &  \mbox{$\rho(A \to B)=1$ iff $\rho(A)=0$ or $\rho(B)=1$};\\[1mm]
\mbox{(v4)} & \mbox{$\rho(\neg\neg A) = 1$ iff $\rho(A) = 1$};\\[1mm]
\mbox{(v5)} & \mbox{$\rho(\neg (A \land B)) = 1$ iff $\rho(\neg A) = 1$ or  $\rho(\neg B) = 1$};\\[1mm]
\mbox{(v6)} & \mbox{$\rho(\neg (A \lor B)) = 1$ iff $\rho(\neg A) = 1$ and  $\rho(\neg B) = 1$};\\[1mm]
\mbox{(v7)} & \mbox{$\rho(\neg (A \to B)) = 1$ iff $\rho(A) = 1$ and  $\rho(\neg B) = 1$};\\[1mm]
\mbox{(v8)} & \mbox{if $\rho(\cons A)=1$, then: $\rho(\neg A)=1$ iff $\rho(A)=0$}.
\end{array}$\\[1mm]

\m 

\noi The semantical consequence relation $\models_{\letk}^2$ of \letk\ with respect to  bivaluations is defined as follows:  $\Gamma\models_{\letk}^2 A$ \ if and only if, for every bivaluation $\rho$ for \letk, if $\rho(B)=1$ for every $B \in \Gamma$ then $\rho(A)=1$.\footnote{The clauses (v1), (v2), (v4), (v5), and (v6)  of Definition~\ref{def-val-LETK} constitute a sound and complete valuation semantics for \fde. See \cite[Sect.~2.1]{letf}.}
\end{definition}

\begin{remark} \label{val-Boole} \ 
 
\m\mh (1) The bivalued semantics for \letk\ makes it clear that   
the semantic values of $\neg A$ and $\con A$ are not  functionally determined by the value of $A$. 
This is in line with the idea of  the six scenarios expressed by \lets\ 
presented in Section~\ref{sec.lets.fde}, where  $A$ holds is to be read $\rho(A)=1$, and the fact that \letk\ is not (deterministically) finite-valued. In Section~\ref{sec.six.valued.semantics.LETK.decid} a six-valued  non-deterministic semantics for \letk\ will be considered, in which the truth-values are triples formed by the values of $A$, $\neg A$, and $\con A$ in a given bivaluation.

\m\mh (2) The 2-element Boolean algebra with domain ${\bf 2}=\{0,1\}$ will be denoted by  $\mathcal{B}_2$, and its operations will be denoted by $\sneg$ (Boolean complement),  $\sqcap$ (infimum), and $\sqcup$ (supremum). The implication will be defined as usual as $a \Rightarrow b =\sneg a \sqcup b$. It is well known that $\mathcal{B}_2$ is the {\em generator} of the variety of Boolean algebras, which implies that a given equation in the signature of Boolean algebras holds in $\mathcal{B}_2$ iff it holds in {\em every} Boolean algebra (this fact will be used later in the proof of Theorem~\ref{sound-compLETKPswap}). 
\end{remark}

\begin{proposition} A function $\rho:For(\Sigma) \to {\bf 2}$ is a bivaluation for \letk\ if and only if it satisfies the following properties, expressed in the language of Boolean algebras:\\[1mm]


$\begin{array}{ll}
\mbox{(v1)}' & \rho(A \land B)= \rho(A) \sqcap \rho(B);\\[1mm]
\mbox{(v2)}' &  \rho(A \lor B)= \rho(A) \sqcup \rho(B);\\[1mm]
\mbox{(v3)}' &   \rho(A \to B)= \rho(A) \Rightarrow \rho(B)=\sneg \rho(A) \sqcup \rho(B);\\[1mm]
\mbox{(v4)}' & \rho(\neg\neg A) = \rho(A);\\[1mm]
\mbox{(v5)}' & \rho(\neg (A \land B)) = \rho(\neg A) \sqcup \rho(\neg B);\\[1mm]
\mbox{(v6)}' & \rho(\neg (A \lor B)) = \rho(\neg A) \sqcap \rho(\neg B);\\[1mm]
\mbox{(v7)}' & \rho(\neg (A \to B)) = \rho(A) \sqcap \rho(\neg B);\\[1mm]
\mbox{(v8)}' & \rho(\cons A) \leq \rho(A) \sqcup \rho(\neg A) \ \mbox{ and } \ \rho(A) \sqcap \rho(\neg A) \sqcap \rho(\cons A)=0.
\end{array}$
\end{proposition}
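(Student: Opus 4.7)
The plan is to observe that since the codomain is $\{0,1\}$, every clause of the form ``$\rho(X)=1$ iff $P$'' where $P$ is a propositional combination of statements of the form ``$\rho(Y_i)=1$'' can be rephrased as an equation $\rho(X)=f(\rho(Y_1),\ldots,\rho(Y_n))$ in $\mathcal{B}_2$, where $f$ is the Boolean term representing $P$. This is just the standard bijection between $\{0,1\}$-valued propositional connectives and Boolean operations: $P \land Q$ corresponds to $\sqcap$, $P \lor Q$ to $\sqcup$, $\neg P$ to $\sneg$, and an unconditional biconditional to equality.

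Applying this observation to clauses (v1)--(v4), one reads off (v1)$'$--(v4)$'$ directly. The same is true for the negation clauses: (v5) becomes (v5)$'$ via $\lor \leftrightsquigarrow \sqcup$, (v6) becomes (v6)$'$ via $\land \leftrightsquigarrow \sqcap$, and (v7) becomes (v7)$'$ via $\land \leftrightsquigarrow \sqcap$. In each of these steps the equivalence follows immediately from the fact that in the two-element Boolean algebra the operations $\sqcap, \sqcup, \sneg$ are truth-functional descriptions of conjunction, disjunction and negation at the meta-level.

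The only clause that requires a small case analysis is (v8). For the forward direction, assume $\rho$ satisfies (v8). If $\rho(\cons A)=0$, then $\rho(\cons A) \le \rho(A)\sqcup \rho(\neg A)$ and $\rho(A)\sqcap \rho(\neg A)\sqcap \rho(\cons A)=0$ both hold vacuously. If $\rho(\cons A)=1$, then by (v8) exactly one of $\rho(A), \rho(\neg A)$ equals $1$, so $\rho(A)\sqcup \rho(\neg A)=1\ge \rho(\cons A)$ and $\rho(A)\sqcap \rho(\neg A)=0$, yielding (v8)$'$. For the converse, assume (v8)$'$ and suppose $\rho(\cons A)=1$. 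Then from $\rho(\cons A) \le \rho(A)\sqcup \rho(\neg A)$ we get at least one of $\rho(A),\rho(\neg A)$ equals $1$, and from $\rho(A)\sqcap \rho(\neg A)\sqcap \rho(\cons A)=0$ we get at most one equals $1$; hence $\rho(\neg A)=1$ iff $\rho(A)=0$, which is exactly (v8).

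There is no genuine obstacle here: the proposition is essentially a bookkeeping statement asserting that the meta-level Boolean combinations defining a bivaluation can be internalised as Boolean-algebraic equations over $\mathcal{B}_2$. The only point worth being explicit about is the translation of the conditional (v8) into the pair of (in)equalities in (v8)$'$, which is handled by the two-case argument above. This reformulation is what will allow, in Theorem~\ref{sound-compLETKPswap}, the passage from $\mathcal{B}_2$ to arbitrary Boolean algebras via the fact that $\mathcal{B}_2$ generates the variety.
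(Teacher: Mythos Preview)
Your proof is correct and takes essentially the same approach as the paper, which simply remarks that (v1)$'$--(v7)$'$ correspond straightforwardly to (v1)--(v7) and that (v8)$'$ corresponds to $PEM^\cons$ and $EXP^\cons$. You supply more detail than the paper does, particularly the explicit two-case verification of the equivalence between (v8) and (v8)$'$, which is entirely appropriate.
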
 

\noi It is straightforward to  see that clauses (v1)$'-$(v7)$'$ correspond to clauses (v1)-(v7) of Definition~\ref{def-val-LETK}. 
Concerning the clause (v8)$'$, note that it corresponds  to the rules $PEM^\con$ and $EXP^\con$. 

\subsubsection{Soundness and completeness}

The proofs of soundness and completeness of \letk\ with respect to the semantics above can be found in 
 \citep{kolkata}. However, in order to keep this paper as self-contained as possible, we will provide the main ideas behind these proofs, which will be adapted and extended to the system \letkp\ to be studied in the following  sections.

\begin{definition} \label{def-F-sat-LETK}
Let $\Delta$ be a set of formulas over $\Sigma$, and let $F$ be a formula over $\Sigma$. The set $\Delta$ is said to be {\em $F$-saturated in \letk} if: (1) $\Delta \nvdash_{\letk} F$; and (2) if $A \notin \Delta$ then $\Delta,A \vdash_{\letk} F$. 
\end{definition}

\begin{remark} \label{Lindenbaum-Los}
From a very general result for logic systems due to Lindenbaum and \L o\'s (see, for example, \cite[Theorem 22.2]{woj:84} or~\cite[Theorem~2.2.6]{car.con.2016.book}), 
the following property holds in \letk\ (in fact, it holds in any Tarskian and finitary logic):\footnote{Recall that a logic {\bf L} defined over a language $\mathcal{L}$ and with a consequence relation $\vdash$ is said to be  {\em Tarskian} if it satisfies the following properties, for every set of formulas $\Gamma \cup \Delta \cup \{A\}\subseteq \mathcal{L}$:~(i)~if $A \in \Gamma$ then $\Gamma \vdash A$; (ii) if $\Gamma \vdash A$ and $\Gamma \subseteq \Delta$ then $\Delta \vdash A$; and (iii)~if $\Delta \vdash A$ and $\Gamma \vdash B$ for every $B \in \Delta$ then $\Gamma \vdash A$.	The logic {\bf L} is said to be {\em finitary} if the following holds: if $\Gamma \vdash A$ then there exists  a finite subset $\Gamma_0$ of $\Gamma$ such that $\Gamma_0 \vdash A$.} 

\begin{itemize}
\item[] If $\Gamma \nvdash_{\letk} A$, then there exists a set $\Delta$ such that $\Gamma \subseteq \Delta$ and $\Delta$ is $A$-saturated in \letk.
\end{itemize}
\end{remark}

\begin{proposition}\label{F-sat-LETK} 
Let $\Delta$ be an $F$-saturated set in \letk. Then:\\[1mm]
(1) $A \in \Delta$ iff $\Delta \vdash_{\letk} A$; \\[1mm]
(2) $A \land B \in \Delta$ iff $A \in \Delta$ and $B \in \Delta$; \\[1mm]
(3) $A \lor B \in \Delta$ iff $A \in \Delta$ or $B \in \Delta$;\\[1mm]
(4) $A \to B \in \Delta$ iff $A \notin \Delta$ or $B \in \Delta$;\\[1mm]
(5) $\neg(A \land B) \in \Delta$ iff $\neg A \in \Delta$ or $\neg B \in \Delta$;\\[1mm]
(6) $\neg(A \lor B) \in \Delta$ iff $\neg A \in \Delta$ and $\neg B \in \Delta$;\\[1mm]
(7) $\neg(A \to B) \in \Delta$ iff $A \in \Delta$ and $\neg B \in \Delta$;\\[1mm]
(8) $\neg\neg A \in \Delta$  iff $A \in \Delta$;\\[1mm]
(9) If $\cons A \in \Delta$  then: either $A \in \Delta$ or $\neg A \in \Delta$, but not both.
\begin{proof} Left to the reader (see  \citep{kolkata}). 
\end{proof}\end{proposition}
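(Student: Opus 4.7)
The plan is to prove item (1) first, since every other clause will use it to reduce membership in $\Delta$ to derivability from $\Delta$. The left-to-right direction of (1) is just reflexivity of $\vdash_{\letk}$. For the right-to-left direction, I would argue by contradiction: if $\Delta \vdash_{\letk} A$ but $A \notin \Delta$, then clause (2) of the definition of $F$-saturation gives $\Delta, A \vdash_{\letk} F$; cutting $A$ against the assumed derivation $\Delta \vdash_{\letk} A$ (a routine consequence of Tarskian transitivity) yields $\Delta \vdash_{\letk} F$, contradicting clause (1) of saturation.

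With (1) in hand, the clauses split into two groups. The "conjunctive" clauses (2), (6), (7), (8) and the forward direction of (3) and (5) follow by directly applying the introduction/elimination rules of Definition~\ref{def.ND.letk} and then invoking (1); for example, $\neg(A\to B)\in\Delta$ gives $A\in\Delta$ and $\neg B\in\Delta$ by $\neg{\to}E$ and (1), while the converse uses $\neg{\to}I$. The harder direction of the "disjunctive" clauses (3), (4), (5) follows a common pattern: to show that $A\lor B\in\Delta$ forces $A\in\Delta$ or $B\in\Delta$, assume both fail; by saturation $\Delta,A\vdash_{\letk} F$ and $\Delta,B\vdash_{\letk} F$, so an application of $\lor E$ with major premise $A\lor B$ yields $\Delta\vdash_{\letk} F$, contradiction. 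Clause (5) is analogous using $\neg{\land}E$ on $\neg(A\land B)$.

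Clause (4) is the step I expect to require the most care, because it is the only one using the non-paraconsistent ingredient of \letk. The forward direction ($\to E$) is immediate via (1). For the reverse, if $A\notin\Delta$ and also, for contradiction, $A\to B\notin\Delta$, then saturation gives $\Delta,A\vdash_{\letk} F$ and $\Delta,A\to B\vdash_{\letk} F$; since the axiom $\to_{CL}$ yields $\Delta\vdash_{\letk} A\lor(A\to B)$, one application of $\lor E$ produces $\Delta\vdash_{\letk} F$, contradicting (1) of saturation. If instead $B\in\Delta$, then $\to I$ (discharging a vacuous assumption of $A$) gives $\Delta\vdash_{\letk} A\to B$, and (1) closes the case.

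Finally, clause (9) uses exactly the two rules governing $\cons$. Assume $\cons A\in\Delta$. If both $A$ and $\neg A$ were in $\Delta$, then $EXP^\con$ would give $\Delta\vdash_{\letk} F$, contradicting saturation; so not both belong to $\Delta$. Conversely, if neither $A$ nor $\neg A$ were in $\Delta$, saturation would yield $\Delta,A\vdash_{\letk} F$ and $\Delta,\neg A\vdash_{\letk} F$; combining these with $\Delta\vdash_{\letk} A\lor\neg A$, obtained from $\cons A\in\Delta$ via $PEM^\con$, by $\lor E$ again gives $\Delta\vdash_{\letk} F$, a contradiction. Throughout, the only subtlety is the systematic use of (1) to pass back and forth between "belongs to $\Delta$" and "is derivable from $\Delta$" so that the natural deduction rules can be applied.
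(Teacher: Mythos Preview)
Your proposal is correct: each clause is handled with the appropriate natural deduction rule together with the closure property~(1), and the ``disjunctive'' cases (3), (4), (5), (9) are dealt with via the standard saturation trick of deriving $F$ from each disjunct and eliminating the available disjunction. The paper itself does not give a proof of this proposition (it simply refers the reader to~\cite{kolkata}), so there is no alternative approach to compare; your argument is exactly the one any reader filling in the details would supply.
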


\begin{corollary} \label{val-F-sat-LETK}
Let $\Delta$ be a set of formulas which is  $F$-saturated in \letk. Let $\rho_\Delta:For(\Sigma) \to {\bf 2}$ be the characteristic function of $\Delta$, that is: for every formula $A$, $\rho_\Delta(A)=1$ iff $A \in \Delta$ (iff $\Delta \vdash_{\letk} A$, by Proposition~\ref{F-sat-LETK}(1)). Then, $\rho_\Delta$ is a bivaluation for \letk.
\begin{proof}
It is an immediate consequence of Proposition~\ref{F-sat-LETK}, items~(2)-(9).
\end{proof}
\end{corollary}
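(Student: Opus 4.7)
The plan is to verify the eight clauses (v1)--(v8) of Definition~\ref{def-val-LETK} one at a time, by unpacking the definition of $\rho_\Delta$ as the characteristic function of $\Delta$ and applying the corresponding item of Proposition~\ref{F-sat-LETK}. Since $\rho_\Delta(B)=1$ iff $B \in \Delta$, every membership biconditional in Proposition~\ref{F-sat-LETK} translates mechanically into a biconditional about values of $\rho_\Delta$.

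Concretely, clauses (v1), (v2), (v3), (v5), (v6), (v7) and (v4) are obtained from items (2), (3), (4), (5), (6), (7) and (8) of Proposition~\ref{F-sat-LETK}, respectively. For instance, to verify (v1) one observes that $\rho_\Delta(A \land B)=1$ iff $A \land B \in \Delta$, which by item (2) holds iff both $A \in \Delta$ and $B \in \Delta$, i.e.~iff $\rho_\Delta(A)=1$ and $\rho_\Delta(B)=1$. The other six clauses follow by exactly the same pattern: each biconditional on membership becomes a biconditional on $\rho_\Delta$-values, which is the desired semantic clause.

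The only clause needing an additional remark is (v8), since it is phrased as an implication rather than a biconditional. Item (9) of Proposition~\ref{F-sat-LETK} says that if $\cons A \in \Delta$, then exactly one of $A, \neg A$ belongs to $\Delta$. Translating via $\rho_\Delta$: if $\rho_\Delta(\cons A)=1$ then $\rho_\Delta(A) \neq \rho_\Delta(\neg A)$ in $\{0,1\}$, which is precisely the statement ``$\rho_\Delta(\neg A)=1$ iff $\rho_\Delta(A)=0$'' required by (v8).

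I expect no substantive obstacle here; the argument is a routine translation once Proposition~\ref{F-sat-LETK} is available. The only subtle point is correctly reinterpreting the exclusive-or formulation of item (9) as the conditional biconditional appearing in (v8); everything else is a direct unpacking of the characteristic function.
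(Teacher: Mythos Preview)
Your proposal is correct and follows exactly the same approach as the paper, which simply states that the result is an immediate consequence of Proposition~\ref{F-sat-LETK}, items~(2)--(9). You have merely spelled out the routine translation in more detail, including the correct handling of clause~(v8) via the exclusive-or in item~(9).
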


\begin{theorem}  [Soundness and completeness of \letk\ w.r.t. bivaluation semantics] \label{adeq-LETK-bival}  \ 

\m\noi
For every set of formulas $\Gamma \cup \{A\} \subseteq For(\Sigma)$: 
$\Gamma \vdash_{\letk} A$ \ iff \  $\Gamma\models_{\letk}^2 A$.
\begin{proof} We will present only an sketch of the proof (see   details in  \citep{kolkata}). \ \\[1mm]
{\em `Only if' part (soundness)}: Let $\rho$ be a bivaluation for \letk. It is immediate to see that, for every instance $A$ of an axiom of \letk, $\rho(A)=1$. On the other hand, for every proper rule of \letk\ (i.e., for every rule which does not discharge hypotheses), if $\rho(A)=1$ for every premise of the rule then $\rho(B)=1$ for the consequence of the rule. Finally, if $B$ is the consequence of an improper rule (that is, a rule which depends on previous derivations in which some  hypotheses are discharged), it is also immediate to see, by induction hypothesis and by definition of $\rho$, that $\rho(B)=1$ provided that $\rho$ satisfies all the assumptions of the rule.\\[1mm]
{\em `If' part (completeness)}: Suppose that $\Gamma \nvdash_{\letk} A$.  By Remark~\ref{Lindenbaum-Los}, there exists a set $\Delta$ such that $\Gamma \subseteq \Delta$ and $\Delta$ is $A$-saturated in \letk. By Corollary~\ref{val-F-sat-LETK}, the characteristic function $\rho_\Delta$ of  $\Delta$ is a bivaluation for \letk\ such that $\rho_\Delta(B)=1$ for every $B \in \Gamma$, but $\rho_\Delta(A)=0$. This shows that $\Gamma \not\models_{\letk}^2 A$.
\end{proof}\end{theorem}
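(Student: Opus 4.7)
The plan is to prove soundness and completeness as two separate implications, using the standard Lindenbaum-style strategy made available by Remark~\ref{Lindenbaum-Los} and Corollary~\ref{val-F-sat-LETK}.

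For soundness, I would argue by induction on the length of a derivation of $A$ from $\Gamma$ in \letk. Fix an arbitrary bivaluation $\rho$ such that $\rho(B)=1$ for every $B \in \Gamma$; I need $\rho(A)=1$. The only axiom schema to check is $\to_{CL}$, and by clauses (v2) and (v3) of Definition~\ref{def-val-LETK} every instance $A \vee (A \to B)$ is automatically mapped to $1$. For the proper rules (those without hypothesis discharge), I would inspect each rule pairwise against the corresponding semantic clause: $\land I/\land E$ mirror (v1); $\lor I$ mirrors (v2); $\to E$ mirrors (v3); $DN$ mirrors (v4); $\neg\land I$, $\neg\lor I/\neg\lor E$, $\neg\to I/\neg\to E$ mirror (v5)--(v7); and finally $EXP^\cons$ and $PEM^\cons$ together mirror (v8). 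The interesting case will be the improper rules $\to I$, $\lor E$, and $\neg\land E$: here one has to verify validity in the natural-deduction sense, namely that if the semantic ``inference'' corresponding to each rule holds for every bivaluation extending assumptions, then so does the conclusion. Since each clause (v1)--(v8) is phrased as a genuine biconditional on $\rho$, these reductions are immediate, and I expect no real obstacle on the soundness side.

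For completeness, I would contrapose: assume $\Gamma \nvdash_{\letk} A$ and construct a bivaluation $\rho$ with $\rho[\Gamma] \subseteq \{1\}$ and $\rho(A)=0$. By Remark~\ref{Lindenbaum-Los}, since \letk\ is Tarskian and finitary, I can extend $\Gamma$ to some $\Delta \supseteq \Gamma$ that is $A$-saturated in \letk. Then I would invoke Corollary~\ref{val-F-sat-LETK}, which guarantees that the characteristic function $\rho_\Delta$ of $\Delta$ is a bona fide bivaluation. Since $\Gamma \subseteq \Delta$, we get $\rho_\Delta(B)=1$ for every $B \in \Gamma$, while by $F$-saturation $A \notin \Delta$ (because $\Delta \nvdash_{\letk} A$, using Proposition~\ref{F-sat-LETK}(1)), hence $\rho_\Delta(A)=0$. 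This exhibits a countermodel, so $\Gamma \not\models_{\letk}^2 A$.

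The main obstacle, strictly speaking, lies one layer deeper, in the supporting lemmas already stated: the delicate clause is (9) of Proposition~\ref{F-sat-LETK}, since its proof requires using both $PEM^\cons$ and $EXP^\cons$ together with saturation to secure the biconditional behaviour of $\cons$. Given that Proposition~\ref{F-sat-LETK} and Corollary~\ref{val-F-sat-LETK} are available, however, the theorem itself reduces to the routine chaining described above, and the only care needed is to state the induction cleanly enough to cover the improper rules.
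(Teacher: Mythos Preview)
Your proposal is correct and follows essentially the same approach as the paper: induction on derivations for soundness (checking the axiom $\to_{CL}$ and each rule against the corresponding semantic clause, with separate care for the improper rules), and the Lindenbaum--\L o\'s construction of an $A$-saturated $\Delta \supseteq \Gamma$ plus Corollary~\ref{val-F-sat-LETK} for completeness. Your version simply spells out a few more of the clause-to-rule correspondences than the paper's sketch does.
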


\subsection{A six-valued (non-deterministic) semantics for \letk} \label{sec.six.valued.semantics.LETK.decid}

  In this section we will present a six-valued non-deterministic semantics for \letk\  based on \textit{Nmatrices}. 
 The latter  are defined  from the above bivalued semantics for \letk\  (Definition \ref{def-val-LETK}) 
 by means of {\em swap structures}, as described below.

 A non-deterministic matrix (Nmatrix, for short) is a \textit{multialgebra}  together with a non-empty subset of its domain, which is the set  of {\em designated} values. A multialgebra is  an algebraic structure  equipped with at least one \textit{multioperation}. The latter is  
  an operation that returns, for  each input, a non-empty set of values instead of a single value.   
 Nmatrices are a generalization of logical matrices in which each entry of the matrices interpreting the connectives can produce a non-empty set of possible semantic values. 
 The valuations defined by means of Nmatrices choose an arbitrary value on such sets, returning thus a single semantic value for each formula. Nmatrices were formally introduced by Avron and Lev  \cite{avr:lev:01,avr:lev:05}, but there are several earlier examples in the literature of the use of non-deterministic matrices 
    (see e.g.~\cite{ivl:73,ivl:88,kear:81,res:62}).

 A swap structure  for \letk\ 
 is a multialgebra  whose domain is formed by triples $(z_1,z_2,z_3)$, called {\em snapshots}, over a    
  Boolean algebra. 
 Each snapshot 
 represents a three-dimensional  semantic value in which the first coordinate $z_1$ represents the 
 (one-dimensional) semantic value of a formula $A$ in a given bivaluation $\rho$, while the other coordinates $z_2$ and $z_3$ represent  the  semantic values of $\neg A$ and $\circ A$, respectively, in this same bivaluation $\rho$.  
 A swap structure for a given logic {\bf L} yields a 
  non-de\-ter\-min\-is\-tic matrix in which the set of designated values is formed 
 by the snapshots $z$ such that $z_1=1$, which means that the formula in the position $z_1$ holds, or `is true'. 
   Let us recall the formal notion of Nmatrices, as introduced in Avron \cite{avr:lev:01,avr:lev:05}:

\begin{definition}
Let $\Theta$ be a propositional signature. A non-deterministic matrix (Nmatrix, in short) is a structure $\mathcal{M}= \langle M, \textrm{D}, \mathcal{O}\rangle$ such that $M$ and \textrm{D} are non-empty sets (of truth-values and designated truth-values, respectively) such that $\textrm{D} \subseteq M$.  $\mathcal{O}$ is a map which assigns, to each $n$-ary connective $\#$ of $\Theta$, a function $\mathcal{O}(\#):M^n \to \wp(M)\setminus\{\emptyset\}$. A (legal) valuation over $\mathcal{M}$ is a function $v:For(\Theta) \to M$ such that $v(\#(A_1,\ldots,A_n)) \in  \mathcal{O}(\#)(v(A_1),\ldots,v(A_n))$ for every formula $\#(A_1,\ldots,A_n)$ over $\Theta$, where $\#$ is a $n$-ary connective. The consequence relation $\models_\mathcal{M}$ induced by $\mathcal{M}$ is defined as follows: given a set of formulas $\Gamma \cup \{A\}$,  $\Gamma \models_\mathcal{M} A$ if and only, for any valuation $v$  over $\mathcal{M}$,  $v(A) \in \textrm{D}$ provided that $v(B) \in \textrm{D}$ for every $B \in \Gamma$.
\end{definition}

\begin{remark} Besides Nmatrices, another interesting non-deterministic semantical framework for non-classical logics is the so-called {\em possible-translations semantics} (PTS), introduced by Carnielli in~\cite{car:90,car:00}. In~\cite{car:con:05}, Theorem~35, it was shown that  Nmatrix semantics can be described by means of a suitable PTS. Conversely, in Theorem~37 it was shown that PTS for a structural logic can be described by Nmatrix semantics. Of course this does not mean that they are similar as {\em decision procedures}: for instance, as shown in~\cite{avr:07}, da Costa's logic $C_1$ cannot be characterized by a single finite Nmatrix, while admitting a finite PTS that provides a decision procedure for it (see~\cite{car.con.2016.book}, Chapter~6). It is worth noting that $C_1$ also admits a characterization by a finite-valued restricted swap structures semantics, i.e., a finite Nmatrix generated by a swap structure where the valuations are restricted, thus producing  an alternative decision procedure for $C_1$ and for every $C_n$, see~\cite{con:tol:2022}.
\end{remark}

Now, from the bivalued semantics of \letk\ (Definition \ref{def-val-LETK}),  
a six-valued swap structure can be defined in a  natural way, yielding  a six-valued Nmatrix for \letk.
 Recall from Remark~\ref{val-Boole}(2) the 
 two-element Boolean algebra  $\mathcal{B}_2= \langle {\bf 2}, \sqcap,\sqcup,\Rightarrow,\sneg,0,1 \rangle$. We denote by ${\bf 2}^3$ the set of triples $z=(z_1,z_2,z_3)$ over ${\bf 2}$.

\begin{definition} \label{defNmatLETK}
The Nmatrix  $\mathcal{M}_{\letk}= \langle \textsc{B}_{\letk}, \textrm{D}, \mathcal{O}\rangle$  for \letk\ over the Boolean algebra $\mathcal{B}_2$ is defined as follows: its domain is the set 
$$\textsc{B}_{\letk}=\{z \in {\bf 2}^3 \ : \ z_3 \leq z_1 \sqcup z_2 \ \mbox{ and } \ z_1 \sqcap z_2 \sqcap z_3=0  \}.$$ 
That is, $\textsc{B}_{\letk} = \big\{T, \, T_0, \, \bo, \, \nei, \, F_0, \, F\big\}$, where
$$\begin{array}{cc}
T=(1,0,1) & \nei=(0,0,0)\\
T_{0}=(1,0,0) & F_{0}=(0,1,0)\\
\bo=(1,1,0) &  F=(0,1,1)
\end{array}$$
The set of designated elements of $\mathcal{M}_{\letk}$ is $\textrm{D}=\big\{z \in \textsc{B}_{\letk} \ : \ z_1=1\big\} = \big\{T, \, T_0, \, \bo\big\}$, while $\textrm{ND}=\big\{z \in \textsc{B}_{\letk} \ : \ z_1\neq 1\big\} = \big\{F, \, F_0, \, \nei\big\}$ is the set of non-designated truth-values. For  $\# \in \{\land, \lor, \to, \neg, \circ\}$
the multioperations $\mathcal{O}(\#)=\tilde{\#}$  are defined as follows, for every $z$ and $w$ in $\textsc{B}_{\letk}$:\footnote{Note that $\textsc{B}_{\letk}$, as well as its multioperations, can be defined {\em mutatis mutandis}  over an arbitrary Boolean algebra besides $\mathcal{B}_2$. This will be done in Section~\ref{sect-lattice}, Definition~\ref{twist-BA}.}

\begin{itemize} 
	\item[(i)] $z\,\tilde{\land}\,w = \{u\in \textsc{B}_{\letk} \ : \ u_1=z_1\sqcap w_1 \ \mbox{ and } u_2=z_2\sqcup w_2\}$;
	\item[(ii)] $z\,\tilde{\lor}\,w = \{u\in \textsc{B}_{\letk} \ : \ u_1=z_1\sqcup w_1 \ \mbox{ and } u_2=z_2\sqcap w_2\}$;
		\item[(iii)] $z\,\tilde{\to}\,w = \{u\in \textsc{B}_{\letk} \ : \ u_1=z_1\Rightarrow w_1 \ \mbox{ and } u_2=z_1\sqcap w_2\}$;
\item[(iv)] $\tilde{\neg}\,z = \{u\in \textsc{B}_{\letk} \ : \ u_1=z_2 \ \mbox{ and } u_2=z_1\}$;
	\item[(v)] $\tilde{\circ}\,z = \{u\in \textsc{B}_{\letk} \ : \ u_1=z_3\}$.
\end{itemize}
\end{definition}

\begin{remark} \label{rem.swap-LETK} \ 
 
 \m\mh (i) The domain $\textsc{B}_{\letk}$ above does not contain the triples $(0,0,1)$ and $(1,1,1)$.  
This  is justified as follows.   
A snapshot $z$ in $\textsc{B}_{\letk}$ represents a triple   $(\rho(A),\rho(\neg A),\rho(\circ A))$ for some formula $A$ 
and bivaluation $\rho$ for \letk. 
The restrictions  $z_3 \leq z_1 \sqcup z_2$ and $z_1 \sqcap z_2 \sqcap z_3=0$ comply with the  rules 
$PEM^\con$ and $EXP^\con$ and the clause (v8) of Definition~\ref{def-val-LETK}, which 
do not allow bivaluations $\rho$ such that $\rho(A) = \rho(\neg A)=0, \rho(\circ A) =1$, 
  or $\rho(A) = \rho(\neg A)=\rho(\circ A) =   1$. 
  
\m\mh (ii) As anticipated in Remark~\ref{val-Boole}(1), the snapshots in $\textsc{B}_{\letk}$ are able to express simultaneously the value of $A$, $\neg A$, and $\con A$ in a given bivaluation. That is, they are able to express (and to combine, as we will see in the item (iii) below) the six scenarios described by \lets\  (recall Section~\ref{sec.lets.fde}) as being themselves  semantic 
 values: 
$T$ and $F$ represent, respectively, reliable information $A$ and $\neg A$;  
$T_0$ and $F_0$ represent  information $A$ and $\neg A$, respectively, but not marked as reliable;  
and  $\bo$ and $\nei$ represent the scenarios with contradictory information and no information at all about $A$ or $\neg A$  (note that the values  $T$ and $F$ of \fde\ have become here $T_0$ and $F_0$).

\m\mh (iii) The multioperations of Definition \ref{defNmatLETK} can also be presented 
as follows:
\begin{itemize} 
	\item[(1)] $(z_1,z_2,z_3)\,\tilde{\land}\,(w_1,w_2,w_3) = (z_1\sqcap w_1,z_2\sqcup w_2,\_ )$;
	\item[(2)] $(z_1,z_2,z_3)\,\tilde{\lor}\,(w_1,w_2,w_3) = (z_1\sqcup w_1,z_2\sqcap w_2,\_ )$;
	\item[(3)] $(z_1,z_2,z_3)\,\tilde{\to}\,(w_1,w_2,w_3) = (z_1\Rightarrow w_1,z_1\sqcap w_2,\_ )$;
	\item[(4)] $\tilde{\neg}\,(z_1,z_2,z_3) = (z_2,z_1,\_ )$;
	\item[(5)] $\tilde{\circ}\,(z_1,z_2,z_3) = (z_3,\_ ,\_ )$.
\end{itemize}

\noi Here, `$\_$' means that the respective coordinate can be filled arbitrarily with 0 or 1, 
provided that the resulting triple belongs to $\textsc{B}_{\letk}$,   i.e., the triple  is a snapshot for \letk. 
 To illustrate  how these multioperations combine the six scenarios  expressed by  \lets, consider a sentence $A \land B$ in a scenario 
such that there is   positive information  $A$ marked as reliable and negative information $B$ not marked as 
reliable.  
This would be expressed by a bivaluation $\rho$ such that:  
\begin{itemize}
\item[] $\rho(A)= 1, \rho(\neg A)= 0$, $\rho(\con A)=1 \mbox{ and }   \rho(B)=0$, $\rho(\neg B)=1, \rho(\con B)= 0,  $
\end{itemize}
\noi and the corresponding snapshots 
\begin{itemize}
\item[] $ ( 1,  0 ,  1) = T \mbox{ and }   (0,1,0) = F_0.$
\end{itemize}
\noi From Definition~\ref{def-val-LETK} (bivaluations),  we have that 
\begin{itemize}
\item[] $ \rho(A \land B) = 0$,  $\rho(\neg(A \land B)) = 1$,  and  $ \rho(\con (A \land B))$ is arbitrary, 
\end{itemize}
\noi 
which is expressed by the triple $(0,1,\_)$, and the latter 
 is exactly what is given by the operation 
 $\tilde{\land}$ applied to the triples $(1,0,1)$ and  $(0,1,0)$. 
 In terms of the six semantic values, we have that when $v(A)= T$ and $v(B) = F_0$, $v(A\land B)$ is either $F$ 
 or $F_0$. 
\end{remark}

\subsubsection{Non-deterministic tables for \letk}  \label{tablesLETKP}

The multioperations (1)-(5) above    can also be  described by means of the following non-deterministic 
tables:

\

{\tiny \begin{center}
\noindent
\begin{tabular}{|c|c|c|c|c|c|c|} 
\hline
 $\tilde{\wedge}$ & $T$  & $T_0$   & $\bo$ & $\nei$ & $F_0$  & $F$\\[1mm]
 \hline \hline
    $T$    & $T, T_0$  & $T, T_0$ & $\bo$ & $\nei$ & $F, F_0$ & $F, F_0$   \\[1mm] \hline
     $T_0$    & $T, T_0$  & $T, T_0$ & $\bo$ & $\nei$ & $F, F_0$ & $F, F_0$  \\[1mm] \hline
     $\bo$    & $\bo$  & $\bo$ & $\bo$ & $F, F_0$ & $F, F_0$ & $F, F_0$  \\[1mm] \hline
     $\nei$    & $\nei$  & $\nei$ & $F, F_0$ & $\nei$ & $F, F_0$ & $F, F_0$  \\[1mm] \hline
     $F_0$    & $F, F_0$  & $F, F_0$ & $F, F_0$ & $F, F_0$ & $F, F_0$ & $F, F_0$  \\[1mm] \hline
     $F$    & $F, F_0$  & $F, F_0$ & $F, F_0$ & $F, F_0$ & $F, F_0$ & $F, F_0$  \\[1mm] \hline
\end{tabular}
\end{center}
 }

\

\

{\tiny
\begin{center}
\begin{tabular}{|c|c|c|c|c|c|c|}
\hline
 $\tilde{\vee}$ & $T$  & $T_0$  & $\bo$ & $\nei$ & $F_0$  & $F$ \\[1mm]
 \hline \hline
    $T$    & $T, T_0$  & $T, T_0$ & $T, T_0$ & $T, T_0$ & $T, T_0$ & $T, T_0$   \\[1mm] \hline
     $T_0$    & $T, T_0$  & $T, T_0$ & $T, T_0$ & $T, T_0$ & $T, T_0$ & $T, T_0$   \\[1mm] \hline
     $\bo$    & $T, T_0$  & $T, T_0$ & $\bo$ & $T, T_0$ & $\bo$ & $\bo$   \\[1mm] \hline
     $\nei$    & $T, T_0$  & $T, T_0$ & $T, T_0$ & $\nei$ & $\nei$ & $\nei$   \\[1mm] \hline
     $F_0$    & $T, T_0$  & $T, T_0$ & $\bo$ & $\nei$ & $F, F_0$ & $F, F_0$   \\[1mm] \hline
     $F$    & $T, T_0$  & $T, T_0$ & $\bo$ & $\nei$ & $F, F_0$ & $F, F_0$  \\[1mm] \hline
\end{tabular}
\end{center}
 }

\

\

{\tiny
\begin{center} \label{tables.letk}
\begin{tabular}{|c|c|c|c|c|c|c|}
\hline
 $\tilde{\to}$ & $T$  & $T_0$  & $\bo$ & $\nei$ & $F_0$  & $F$ \\[1mm]
 \hline \hline
    $T$    & $T, T_0$  & $T, T_0$ & $\bo$ & $\nei$ & $F, F_0$ & $F, F_0$   \\[1mm] \hline
     $T_0$    & $T, T_0$  & $T, T_0$ & $\bo$ & $\nei$ & $F, F_0$ & $F, F_0$  \\[1mm] \hline
     $\bo$    & $T, T_0$  & $T, T_0$ & $\bo$ & $\nei$ & $F, F_0$ & $F, F_0$  \\[1mm] \hline
     $\nei$    & $T, T_0$  & $T, T_0$ & $T, T_0$ & $T, T_0$ & $T, T_0$ & $T, T_0$  \\[1mm] \hline
     $F_0$    & $T, T_0$  & $T, T_0$ & $T, T_0$ & $T, T_0$ & $T, T_0$ & $T, T_0$  \\[1mm] \hline
     $F$    & $T, T_0$  & $T, T_0$ & $T, T_0$ & $T, T_0$ & $T, T_0$ & $T, T_0$  \\[1mm] \hline
\end{tabular}
\hspace{0.3cm}
\begin{tabular}{|c||c|} \hline
$\quad$ & $\tilde{\neg}$ \\[1mm]
 \hline \hline
    $T$   & $F, F_0$    \\[1mm] \hline
     $T_0$   & $F, F_0$    \\[1mm] \hline
     $\bo$   &$\bo$    \\[1mm] \hline
     $\nei$   & $\nei$    \\[1mm] \hline
     $F_0$   & $T, T_0$    \\[1mm] \hline
     $F$   & $T, T_0$    \\[1mm] \hline
\end{tabular}
\hspace{0.3cm}
\begin{tabular}{|c||c|}
\hline
 $\quad$ & $\tilde{\circ}$ \\[1mm]
 \hline \hline
    $T$   & \textrm{D}    \\[1mm] \hline
     $T_0$   & \textrm{ND}    \\[1mm] \hline
     $\bo$   & \textrm{ND}    \\[1mm] \hline
     $\nei$   & \textrm{ND}    \\[1mm] \hline
     $F_0$   & \textrm{ND}    \\[1mm] \hline
     $F$   & \textrm{D}    \\[1mm] \hline
\end{tabular}
\end{center}
}

\

\mh For simplicity, we write $X$ and $X,Y$ instead of $\{X\}$ and $\{X,Y\}$. 
Recall that $\textrm{D}$ and $\textrm{ND}$ denote, respectively, the set of  
designated and  non-designated values.

\subsubsection{Soundness and completeness of the six-valued semantics}

The six-valued  semantics above,  as expected, is equivalent to the bivalued semantics 
of Definition   \ref{def-val-LETK}. 
Recall that, if $z \in \textsc{B}_{\letk}$ then $z_i$ denotes the $ith$-coordinate of the triple $z$. Then:

\begin{proposition} \label{lemma-sound-LETK}
For every valuation $v$  over the Nmatrix $\mathcal{M}_{\letk}$  the mapping $\rho_v:For(\Sigma) \to {\bf2}$ given by $\rho_v(A)=v(A)_1$ is a bivaluation for \letk\ such that: $\rho_v(A)=1$ \ iff \ $v(A) \in \textrm{D}$, for every formula $A$.
\begin{proof} Let $A,B \in For(\Sigma)$. Since $v$ is a valuation over $\mathcal{M}_{\letk}$ it satisfies the following:
$$\begin{array}{ll}
	v(\#\,A) \in  \tilde{\#}\,v(A) & \mbox{ for $\# \in \{\neg,\cons\}$}\\
	v(A \,\#\, B) \in v(A) \,\tilde{\#}\, v(B) & \mbox{ for $\# \in \{\land,\lor,\to\}$}\\
	\end{array}$$
Thus, by definition of $\rho_v$, by Definition~\ref{defNmatLETK} and by the fact that $v$ is a valuation over  $\mathcal{M}_{\letk}$, $\rho_v(\neg A)=v(\neg A)_1=v(A)_2$ and $\rho_v(\cons A)=v(\cons A)_1=v(A)_3$. Moreover: 
	$$\begin{array}{lll}
	\rho_v(A \land B) &=&  v(A\land B)_1=v(A)_1 \sqcap v(B)_1=\rho_v(A) \sqcap \rho_v(B)\\
	\rho_v(A \lor B) &=&  v(A\lor B)_1=v(A)_1 \sqcup v(B)_1=\rho_v(A) \sqcup \rho_v(B)\\
	\rho_v(A \to B) &=&  v(A\to B)_1=v(A)_1 \Rightarrow v(B)_1=\rho_v(A) \Rightarrow \rho_v(B)
	\end{array}$$
Hence, by Remark~\ref{val-Boole}(2), $\rho_v$ satisfies clauses (v1)-(v3). On the other hand, $\rho_v(\neg\neg A)=v(\neg A)_2=v(A)_1=\rho_v(A)$ and so $\rho_v$ satisfies (v4). In addition, 
$$\rho_v(\neg(A \land B))=v(\neg(A \land B))_1=v(A \land B)_2=v(A)_2 \sqcup v(B)_2=\rho_v(\neg A) \sqcup \rho_v(\neg B)$$
and so $\rho_v$ satisfies clause (v5), by Remark~\ref{val-Boole}(2). Analogously, it is proven that $\rho_v$ satisfies clauses (v6) and (v7). Finally, since $v(A) \in \textsc{B}_{\letk}$ then $v(A)_3 \leq v(A)_1 \sqcup v(A)_2$ and $v(A)_1 \sqcap v(A)_2 \sqcap v(A)_3=0$. That is, $\rho_v(\cons A) \leq \rho_v(A) \sqcup \rho_v(\neg A)$ and $\rho_v(A) \sqcap \rho_v(\neg A) \sqcap \rho_v(\cons A)=0$. By Remark~\ref{val-Boole}(2), this means that $\rho_v$ satisfies (v8). This shows that $\rho_v$ is a bivaluation for \letk\ such that, by definition,  $\rho_v(A)=1$ \ iff \ $v(A) \in \textrm{D}$, for every formula $A$.
\end{proof}
\end{proposition}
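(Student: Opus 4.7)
The plan is to unfold the definitions and do a clause-by-clause verification. Set $\rho_v(A) := v(A)_1$ (the first coordinate). The second claim, $\rho_v(A)=1$ iff $v(A) \in \textrm{D}$, is immediate from the definition $\textrm{D} = \{z \in \textsc{B}_{\letk} : z_1 = 1\}$, so the entire substance is showing that $\rho_v$ satisfies (v1)--(v8) of Definition~\ref{def-val-LETK} (equivalently (v1)$'$--(v8)$'$, which I would use since they are expressed in Boolean-algebra form and align syntactically with the multioperation clauses of Definition~\ref{defNmatLETK}).

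The binary clauses (v1)$'$--(v3)$'$ follow directly from the first-coordinate constraint on $\tilde\land, \tilde\lor, \tilde\to$: since $v$ is a legal valuation over $\mathcal{M}_{\letk}$, the snapshot $v(A \# B)$ lies in $v(A)\,\tilde\#\, v(B)$, so its first coordinate is $v(A)_1 \# v(B)_1$ for $\# \in \{\sqcap, \sqcup, \Rightarrow\}$ respectively; taking first coordinates gives (v1)$'$--(v3)$'$. For the double-negation clause (v4)$'$, I would compute $\rho_v(\neg\neg A) = v(\neg\neg A)_1 = v(\neg A)_2 = v(A)_1 = \rho_v(A)$, where the middle step uses the $\tilde\neg$ clause (which forces the second coordinate of $\tilde\neg\,z$ to equal $z_1$).

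For the clauses (v5)$'$--(v7)$'$ governing negated binary formulas, the key move is to unwind through two layers: $\rho_v(\neg(A \# B)) = v(\neg(A \# B))_1 = v(A \# B)_2$, and then apply the second-coordinate constraint of $\tilde\#$, giving $v(A)_2 \sqcup v(B)_2$ for $\land$, $v(A)_2 \sqcap v(B)_2$ for $\lor$, and $v(A)_1 \sqcap v(B)_2$ for $\to$. Rewriting these as $\rho_v(\neg A), \rho_v(\neg B), \rho_v(A)$ yields exactly (v5)$'$--(v7)$'$. Finally, clause (v8)$'$ comes for free from the membership condition $v(A) \in \textsc{B}_{\letk}$: the two defining inequalities $z_3 \leq z_1 \sqcup z_2$ and $z_1 \sqcap z_2 \sqcap z_3 = 0$ translate, on the nose, into $\rho_v(\cons A) \leq \rho_v(A) \sqcup \rho_v(\neg A)$ and $\rho_v(A) \sqcap \rho_v(\neg A) \sqcap \rho_v(\cons A) = 0$, once one observes that $v(\cons A)_1 = v(A)_3$ by the $\tilde\cons$ clause.

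There is no real obstacle here; the proof is pure bookkeeping. The only point that requires mild care is the double-unpacking used for the negated binary connectives, where one must pass from the first coordinate of $v(\neg(A\#B))$ to the second coordinate of $v(A\#B)$ via $\tilde\neg$ before applying the second-coordinate clause of $\tilde\#$. Once that pattern is recognized, all six negation-related clauses fall out uniformly.
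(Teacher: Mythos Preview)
Your proposal is correct and essentially identical to the paper's own proof: both verify (v1)--(v8) by reading off the first and second coordinates of $v$ applied to compound formulas via the multioperation clauses of Definition~\ref{defNmatLETK}, with the only nontrivial move being the two-step unpacking $\rho_v(\neg(A\#B)) = v(A\#B)_2$ for the negated binary connectives and the use of the domain constraint on $\textsc{B}_{\letk}$ for (v8). The sole cosmetic difference is that you work directly with the Boolean-algebra forms (v1)$'$--(v8)$'$, whereas the paper writes out the same computations and then cites Remark~\ref{val-Boole}(2) to translate them into (v1)--(v8).
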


\begin{theorem} [Soundness of \letk\ w.r.t. the Nmatrix $\mathcal{M}_{\letk}$] \label{sound-Nmat-LETK} \ \\
	For every set of formulas $\Gamma \cup \{A\} \subseteq For(\Sigma)$: 
	$\Gamma \vdash_{\letk} A$ \ implies that \  $\Gamma\models_{\mathcal{M}_{\letk}} A$.
\begin{proof} 
Suppose that $\Gamma \vdash_{\letk} A$. By Theorem~\ref{adeq-LETK-bival},  $\Gamma\models_{\letk}^2 A$. Let $v$ be a valuation  over the Nmatrix $\mathcal{M}_{\letk}$ such that $v(B)  \in \textrm{D}$ for every $B \in \Gamma$, and let $\rho_v$ be the bivaluation for \letk\ defined from $v$ as in Proposition~\ref{lemma-sound-LETK}. Since $\rho_v(B)=1$ for every $B \in \Gamma$ then $\rho_v(A) =1$, given that $\Gamma\models_{\letk}^2 A$. From this $v(A)\in \textrm{D}$, showing that $\Gamma\models_{\mathcal{M}_{\letk}} A$.
\end{proof}\end{theorem}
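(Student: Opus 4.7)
The plan is to reduce soundness with respect to the six-valued Nmatrix to the already-established soundness with respect to the bivaluation semantics, using Proposition~\ref{lemma-sound-LETK} as the bridge. Concretely, I would start from $\Gamma \vdash_{\letk} A$ and apply Theorem~\ref{adeq-LETK-bival} to obtain $\Gamma \models_{\letk}^2 A$. Then, given any valuation $v$ over $\mathcal{M}_{\letk}$ sending every premise $B \in \Gamma$ into $\textrm{D}$, I would invoke Proposition~\ref{lemma-sound-LETK} to produce the bivaluation $\rho_v(C) = v(C)_1$, which satisfies $\rho_v(C) = 1$ iff $v(C)\in\textrm{D}$ for every formula $C$. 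Since $\rho_v(B) = 1$ for every $B \in \Gamma$, the bivalued consequence relation yields $\rho_v(A) = 1$, whence $v(A)\in\textrm{D}$. This is exactly $\Gamma \models_{\mathcal{M}_{\letk}} A$.

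Because both Theorem~\ref{adeq-LETK-bival} and Proposition~\ref{lemma-sound-LETK} are already in hand, the argument is essentially a two-step composition and presents no genuine obstacle. The real conceptual work was done earlier: (a)~the Lindenbaum--\L o\'s construction of $F$-saturated sets underlying the bivalued completeness theorem, and (b)~the verification, in Proposition~\ref{lemma-sound-LETK}, that the first-coordinate projection of any Nmatrix valuation satisfies the bivaluation clauses (v1)--(v8), which is where the structural constraints on $\textsc{B}_{\letk}$ come into play.

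An alternative, direct route would be induction on the height of derivations, checking each axiom and rule of Definition~\ref{def.ND.letk} against the multitables of Section~\ref{tablesLETKP}. The delicate cases would be $EXP^\cons$ and $PEM^\cons$, whose validity rests precisely on the constraints $z_3 \leq z_1 \sqcup z_2$ and $z_1 \sqcap z_2 \sqcap z_3 = 0$ built into $\textsc{B}_{\letk}$ (these exclude the triples $(0,0,1)$ and $(1,1,1)$, thus ruling out the scenarios that would falsify $PEM^\cons$ and $EXP^\cons$ respectively). The factored approach is preferable because those case-by-case verifications have already been absorbed into the soundness half of Theorem~\ref{adeq-LETK-bival}, and the bridging Proposition~\ref{lemma-sound-LETK} encapsulates the translation between the three-dimensional snapshots and the one-dimensional bivalues.
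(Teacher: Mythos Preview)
Your proposal is correct and follows essentially the same approach as the paper: reduce to bivalued soundness via Theorem~\ref{adeq-LETK-bival}, then use Proposition~\ref{lemma-sound-LETK} to translate an Nmatrix valuation into a bivaluation by projecting onto the first coordinate. The additional remarks about the alternative direct induction and the role of the snapshot constraints are accurate but not needed for the proof itself.
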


\m

The completeness of \letk\ w.r.t.~the six-valued Nmatrix semantics will be proved in a similar way, based once again on Theorem~\ref{adeq-LETK-bival}.

\begin{proposition} \label{lemma-comple-LETK}
For every bivaluation $\rho$ for \letk\  the mapping $v_{\rho}:For(\Sigma) \to \textsc{B}_{\letk}$ given by $v_{\rho}(A)=(\rho(A),\rho(\neg A),\rho(\circ A))$ is a valuation over the Nmatrix $\mathcal{M}_{\letk}$ such that: $v_{\rho}(A) \in \textrm{D}$ \ iff \ $\rho(A)=1$, for every formula $A$.
\begin{proof} Clearly $v_{\rho}(A) \in \textsc{B}_{\letk}$, hence the function is well-defined. Let $A,B \in For(\Sigma)$. Then, if $a=\rho(\circ (A \land B))$ we have: 
	$$\begin{array}{lll}
	v_{\rho}(A \land B) &=&  (\rho(A\land B),\rho(\neg(A\land B)),a)\\
	&=& (\rho(A)\sqcap \rho(B),\rho(\neg A)\sqcup \rho(\neg B),a) \in v_{\rho}(A)\,\tilde{\land}\, v_{\rho}(B).
	\end{array}$$
The cases of $\lor$ and $\to$ are proved analogously. Concerning negation, let $b=\rho(\circ \neg A)$. Then:
	$$\begin{array}{lll}
	v_{\rho}(\neg A) &=&  (\rho(\neg A),\rho(\neg\neg A),b)\\
	&=& (\rho(\neg A),\rho(A),b) \in \tilde{\neg}\, v_{\rho}(A).
	\end{array}$$
Finally, if $a=\rho(\neg\cons A)$ and $b=\rho(\cons\cons A)$ then
	$$\begin{array}{lll}
	v_{\rho}(\cons A) &=&  (\rho(\cons A),a,b)  \in \tilde{\cons}\, v_{\rho}(A).
	\end{array}$$
		This shows that $v_\rho$ is a bivaluation for \letk\ such that, by definition,   $v_\rho(A) \in \textrm{D}$ \ iff \ $\rho(A)=1$, for every formula $A$.       
\end{proof}\end{proposition}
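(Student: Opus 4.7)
The plan is to decompose the statement into three tasks: (a) show that $v_\rho(A)$ actually lies in $\textsc{B}_{\letk}$ for every formula $A$, so that the map is well-defined; (b) verify that for every connective $\#$ of $\Sigma$ and all formulas $A_1,\dots,A_n$, the snapshot $v_\rho(\#(A_1,\dots,A_n))$ belongs to the set $\tilde{\#}(v_\rho(A_1),\dots,v_\rho(A_n))$, so that $v_\rho$ is a legal valuation over $\mathcal{M}_{\letk}$; and (c) verify the designation equivalence $v_\rho(A)\in\textrm{D}$ iff $\rho(A)=1$.

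For (a), I would apply clause (v8)$'$ to an arbitrary formula $A$: this yields $\rho(\cons A) \leq \rho(A) \sqcup \rho(\neg A)$ and $\rho(A) \sqcap \rho(\neg A) \sqcap \rho(\cons A) = 0$, which are exactly the two defining constraints of $\textsc{B}_{\letk}$ in Definition~\ref{defNmatLETK}. For (b), I would go connective by connective. For the binary cases $\# \in \{\land,\lor,\to\}$, the clauses (v1)$'$--(v3)$'$ determine the first coordinate of $v_\rho(A \# B)$ to be exactly the value required in the first coordinate of $\tilde{\#}\,v_\rho(A)\,v_\rho(B)$, while (v5)$'$--(v7)$'$ do the same job for the second coordinate; the third coordinate is unconstrained by $\tilde{\#}$ (see the presentation with `$\_$' in Remark~\ref{rem.swap-LETK}(iii)), and whatever value $\rho(\cons(A \# B))$ takes will produce an admissible snapshot by (a) applied to $A \# B$. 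For $\tilde{\neg}$, the first coordinate is $\rho(\neg A) = v_\rho(A)_2$, the second is $\rho(\neg\neg A) = \rho(A) = v_\rho(A)_1$ by (v4)$'$, and the third coordinate is free. For $\tilde{\cons}$, only the first coordinate is prescribed, and it matches: $v_\rho(\cons A)_1 = \rho(\cons A) = v_\rho(A)_3$.

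Task (c) is immediate from the definitions: $v_\rho(A)\in\textrm{D}$ iff $v_\rho(A)_1 = 1$ iff $\rho(A)=1$. I do not expect a real obstacle here; the proof is essentially a mirror image of Proposition~\ref{lemma-sound-LETK}, consisting in observing that clauses (v1)$'$--(v8)$'$ of Definition~\ref{def-val-LETK} were tailored to correspond exactly to the multioperations $\tilde{\#}$ of $\mathcal{M}_{\letk}$ and to the shape constraints of $\textsc{B}_{\letk}$. The only mild subtlety worth flagging is that the third coordinate of each output snapshot (which does not appear in the multioperation specification) must still be a legal value of $\rho(\cons\varphi)$ on the compound formula $\varphi$, and this is precisely what (a) guarantees.
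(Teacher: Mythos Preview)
Your proposal is correct and follows essentially the same approach as the paper's proof: both verify well-definedness via the snapshot constraints (your explicit appeal to (v8)$'$ is what the paper's ``Clearly $v_\rho(A)\in\textsc{B}_{\letk}$'' leaves implicit), then check connective-by-connective that the first two coordinates of $v_\rho(\#(\vec A))$ match the multioperation's requirements while the third is free, and finally read off the designation equivalence from the definition of $\textrm{D}$. If anything, your version is slightly more explicit than the paper's in flagging that the third coordinate must still produce a legal snapshot and that part~(a) covers this.
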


\begin{theorem} [Completeness of \letk\ w.r.t. the Nmatrix $\mathcal{M}_{\letk}$] \label{comp-Nmat-LETK} \ \\
	For every set of formulas $\Gamma \cup \{A\} \subseteq For(\Sigma)$: 
	$\Gamma \models_{\mathcal{M}_{\letk}} A$ \ iff \  $\Gamma \vdash_{\letk} A$.
\begin{proof} Assume that $\Gamma\models_{\mathcal{M}_{\letk}} A$, and let $\rho$ be a bivaluation for \letk\ such that $\rho(B)=1$ for every $B \in \Gamma$. Let $v_{\rho}$ be defined from $\rho$ as in Proposition~\ref{lemma-comple-LETK}. Then, $v_{\rho}$ is a valuation over $\mathcal{M}_{\letk}$ such that $v_{\rho}(B) \in \textrm{D}$, for every $B \in \Gamma$. By hypothesis, $v_{\rho}(A) \in \textrm{D}$, whence $\rho(A)=1$. This shows that  $\Gamma\models_{\letk}^2 A$. By completeness of \letk\ w.r.t. bivaluations, $\Gamma \vdash_{\letk} A$.
\end{proof}\end{theorem}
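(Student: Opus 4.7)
The plan is to reduce the six-valued completeness claim to the already-established bivalued completeness theorem (Theorem~\ref{adeq-LETK-bival}), using Proposition~\ref{lemma-comple-LETK} as the bridge. The forward direction ($\vdash \Rightarrow \models_{\mathcal{M}_{\letk}}$) is exactly the content of Theorem~\ref{sound-Nmat-LETK}, so only the nontrivial converse needs new work here.

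For the converse, I would argue directly. Assume $\Gamma \models_{\mathcal{M}_{\letk}} A$, and pick an arbitrary bivaluation $\rho$ for \letk\ satisfying $\rho(B)=1$ for every $B \in \Gamma$. Apply Proposition~\ref{lemma-comple-LETK} to form the associated map $v_\rho:For(\Sigma) \to \textsc{B}_{\letk}$ given coordinate-wise by $v_\rho(C)=(\rho(C),\rho(\neg C),\rho(\cons C))$. That proposition guarantees both that $v_\rho$ is a legal valuation over the Nmatrix $\mathcal{M}_{\letk}$ and that $v_\rho(C)\in \textrm{D}$ iff $\rho(C)=1$. Consequently every premise $B \in \Gamma$ is designated under $v_\rho$, so the hypothesis $\Gamma \models_{\mathcal{M}_{\letk}} A$ forces $v_\rho(A) \in \textrm{D}$, which unpacks to $\rho(A)=1$. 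Since $\rho$ was arbitrary, this establishes $\Gamma\models_{\letk}^2 A$, and Theorem~\ref{adeq-LETK-bival} then yields $\Gamma\vdash_{\letk} A$.

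The only real obstacle is not here but has already been absorbed into Proposition~\ref{lemma-comple-LETK}: one must verify that the triple $(\rho(C),\rho(\neg C),\rho(\cons C))$ indeed lies in $\textsc{B}_{\letk}$, which amounts precisely to clause~(v8)$'$ of the bivaluation semantics (the inequalities $\rho(\cons C)\leq \rho(C)\sqcup\rho(\neg C)$ and $\rho(C)\sqcap\rho(\neg C)\sqcap\rho(\cons C)=0$), and that $v_\rho$ respects each multioperation, which follows term-by-term from the Boolean reformulation (v1)$'$--(v7)$'$. Once those routine verifications are in place, the completeness argument is the three-line chain above; the conceptual point is simply the bijective correspondence between bivaluations and legal Nmatrix valuations, with the snapshot constraints on $\textsc{B}_{\letk}$ matching the constraints on $\rho$ imposed by the rules $PEM^\cons$ and $EXP^\cons$.
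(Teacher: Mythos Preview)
Your proposal is correct and follows essentially the same route as the paper: reduce to the bivalued semantics via Proposition~\ref{lemma-comple-LETK}, then invoke Theorem~\ref{adeq-LETK-bival}. Your additional remarks about the forward direction being Theorem~\ref{sound-Nmat-LETK} and about the role of clause~(v8)$'$ in ensuring well-definedness of $v_\rho$ are accurate elaborations, but the core argument is identical to the paper's.
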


\begin{corollary} 
The six-valued  Nmatrix $\mathcal{M}_{\letk}$ provides a decision procedure for \letk.
\begin{proof}
It is well known that finite Nmatrices, like the one above, provide  
 decision procedures. This is because  the number of sentential variables is finite, 
 the number of bifurcated lines  is always finite, and clearly there is no loop in the procedure. 
\end{proof}
\end{corollary}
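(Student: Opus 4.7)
The plan is to reduce the decidability of $\vdash_{\letk}$ to a bounded search inside the six-valued Nmatrix, leveraging the soundness and completeness results just established. First, by Theorems~\ref{sound-Nmat-LETK} and~\ref{comp-Nmat-LETK}, the relation $\Gamma \vdash_{\letk} A$ coincides with $\Gamma \models_{\mathcal{M}_{\letk}} A$, so it suffices to decide the latter. Since \letk\ is finitary, I may restrict attention to finite $\Gamma$, and I will work with the finite set $S = \{B_1,\ldots,B_k\}$ of subformulas of $\Gamma \cup \{A\}$.

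Next, I would call a partial map $\bar v : S \to \textsc{B}_{\letk}$ \emph{coherent} if $\bar v(\#(C_1,\ldots,C_n)) \in \tilde\#(\bar v(C_1),\ldots,\bar v(C_n))$ for every compound subformula $\#(C_1,\ldots,C_n) \in S$, and then establish two facts: (a) every legal valuation $v$ over $\mathcal{M}_{\letk}$ restricts to a coherent map on $S$; (b) conversely, every coherent $\bar v$ extends to a legal valuation on $For(\Sigma)$, since each multioperation cell in Definition~\ref{defNmatLETK} is non-empty, so values for formulas outside $S$ can be picked inductively on complexity. Together, (a) and (b) show that $\Gamma \models_{\mathcal{M}_{\letk}} A$ holds iff every coherent $\bar v$ with $\bar v(B) \in \textrm{D}$ for all $B \in \Gamma$ also satisfies $\bar v(A) \in \textrm{D}$.

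The decision procedure then enumerates the at most $6^k$ candidate functions $\bar v : S \to \textsc{B}_{\letk}$, tests coherence by consulting the finite tables in Section~\ref{tablesLETKP} (one lookup per compound subformula of $S$), filters out those incoherent or not designating every element of $\Gamma$, and finally checks whether $\bar v(A) \in \textrm{D}$ on every surviving candidate. Termination is immediate from the finiteness of $\textsc{B}_{\letk}$ and $S$, and correctness follows from the equivalence in the previous paragraph combined with Theorems~\ref{sound-Nmat-LETK} and~\ref{comp-Nmat-LETK}.

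The only delicate step is~(b), the extendability of coherent assignments on $S$ to total legal valuations on $For(\Sigma)$. This reduces to verifying that the multioperations of Definition~\ref{defNmatLETK} never return the empty set, which is transparent from the explicit formulas for $\tilde\#$ together with the constraints $z_3 \leq z_1 \sqcup z_2$ and $z_1 \sqcap z_2 \sqcap z_3 = 0$ defining $\textsc{B}_{\letk}$. Once non-emptiness is checked, the procedure is pure bounded enumeration with no risk of non-termination, which matches the author's informal remark that there is ``no loop in the procedure.''
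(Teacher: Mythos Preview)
Your proof is correct and is essentially a careful unpacking of the paper's one-line appeal to the well-known fact that finite Nmatrices yield decision procedures. The only stylistic difference is that the paper's mention of ``bifurcated lines'' gestures at the branching truth-table method (split the row whenever a multioperation cell has more than one output), whereas you enumerate all coherent partial assignments on the subformula set and then check extendability; these two procedures explore exactly the same finite search space, and your explicit verification of step~(b)---that coherent partial valuations extend to full legal valuations because every multioperation cell is non-empty and $S$ is subformula-closed---is precisely the analyticity property of Nmatrices that the paper takes for granted.
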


\section{Adding propagation rules to \letk: the logic \letkp} \label{sec.letkp}

In a broad sense, propagation of  classicality is how classical   behavior propagates from less complex to more complex sentences, 
and vice-versa. The   \lets\ investigated so far (\letj, \letf, \letk, \letfm) enjoy the following property:

\begin{proposition}\label{propag.cla} Let $\textbf{L} \in \{ \mbox{\letfm, \letj, \letk, \letf} \}$ and $\Gamma = \{\circ \neg^{n_1}A_1, \dots, \circ \neg^{n_m}A_m \}$, for $n_i \geq 0$ (where $ \neg^{n_i} $,  
		$ n_i \geq 0 $, represents $ n_i $ occurrences  of negations before the formula $ A_i $). Then, for any formula $B$ formed with $A_1, \dots, A_m $ over the  signature $\{ \neg, \land, \lor, \to\}$ (and $\{ \neg, \land, \lor  \}$ in the case of \letf\ and \letfm), 
$\Gamma\vdash_{\textbf{L}} B \lor \neg B $, and $\Gamma, B , \neg B \vdash_{\textbf{L}} C$. 
That is, $B$ behaves classically in this context. 

	\begin{proof} This result is proved for \letf\ in   \cite[Fact~31]{letf}, and for \letj\ in \cite[Proposition~7]{axioms}. 
	Proofs for \letfm\ and \letk\ can be obtained similarly. 
	\end{proof} \end{proposition}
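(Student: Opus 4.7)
The plan is to prove the two claims simultaneously by induction on the complexity of $B$, working in the natural deduction system for \letk\ (the proofs for the other three logics being either a subset of these rules or obtained by the same strategy). I will fix $\Gamma$ as in the statement and show, for every subformula $B$ built from the $A_i$'s, both (a) $\Gamma\vdash B\lor\neg B$ and (b) $\Gamma,B,\neg B\vdash C$ for every $C$.

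The base case is $B=A_i$ for some $i$. From the hypothesis $\cons\neg^{n_i}A_i\in\Gamma$, the rules $PEM^\cons$ and $EXP^\cons$ yield $\neg^{n_i}A_i\lor\neg^{n_i+1}A_i$ and $\neg^{n_i}A_i,\neg^{n_i+1}A_i\vdash C$. Repeated application of \textbf{DN} (in both directions) collapses any even number of consecutive negations, so $\neg^{n_i}A_i$ is interderivable with $A_i$ or with $\neg A_i$ depending on the parity of $n_i$; in both cases one reads off $\Gamma\vdash A_i\lor\neg A_i$ and $\Gamma,A_i,\neg A_i\vdash C$.

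For the inductive step I would treat each connective separately, using the introduction/elimination rules for $\neg\land$, $\neg\lor$, $\neg\to$, and \textbf{DN} to reduce the classicality of the compound to that of its immediate subformulas. For instance, to obtain (a) for $B_1\land B_2$, I do a double $\lor E$ on the IH instances $B_i\lor\neg B_i$: in the branch $B_1,B_2$ use $\land I$ and $\lor I$; in any branch containing $\neg B_j$ use $\neg\land I$ followed by $\lor I$. For (b) with $B=B_1\land B_2$, I assume $B_1\land B_2$ and $\neg(B_1\land B_2)$, extract $B_1$ and $B_2$ via $\land E$, and then apply $\neg\land E$ on $\neg(B_1\land B_2)$, discharging each branch by the IH explosion instance for the relevant $B_j$. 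The cases of $\lor$ and $\neg$ are dual/analogous; the case of $\to$ is handled using $\to_{CL}$ to case-split on $B_1\lor(B_1\to B_2)$ together with $\neg\to I,\neg\to E$ and the IH for $B_1$ and $B_2$ (this case is vacuous for \letf\ and \letfm\ since $\to$ is not in their signature).

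The only real subtlety, which I expect to be the main obstacle to keep short rather than technically hard, is the bookkeeping in the negation step: to establish (a) and (b) for $\neg B_1$ we need PEM and explosion at the formula $\neg B_1$, which via \textbf{DN} reduces exactly to the IH for $B_1$; one must check that the shape of the inductive hypothesis is strong enough (i.e., PEM plus the full explosion schema for arbitrary $C$) so that this \textbf{DN}-reduction goes through uniformly. Once this is verified, the same induction covers \letfm, \letj, \letk\ and \letf\ simultaneously, since all of them contain the rules ($\land I/E$, $\lor I/E$, $\neg\land I/E$, $\neg\lor I/E$, \textbf{DN}, $PEM^\cons$, $EXP^\cons$) used above, with the implicational rules (including $\to_{CL}$) invoked only in the cases where $\to$ belongs to the signature.
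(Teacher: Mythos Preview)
Your inductive argument is correct and is exactly the standard route; the paper itself does not give a proof at all but simply refers to \cite[Fact~31]{letf} and \cite[Proposition~7]{axioms}, where the same induction on the construction of $B$ is carried out. So there is nothing to compare beyond the fact that you have actually written out what the paper outsources.

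One small caveat worth recording: your treatment of the $\to$ case invokes the axiom $\to_{CL}$, which is available in \letk\ but \emph{not} in \letj\ (whose implication is Nelson/intuitionistic). Your parenthetical ``or obtained by the same strategy'' therefore hides a genuine adjustment for \letj. The fix is already implicit in your framework: in the branch $\neg B_1$ of the case split on $B_1\lor\neg B_1$, assume $[B_1]$, apply the inductive explosion hypothesis $\Gamma,B_1,\neg B_1\vdash B_2$, and discharge via $\to I$ to obtain $B_1\to B_2$; the branch $B_1$ is then handled by splitting on $B_2\lor\neg B_2$ exactly as you describe. This avoids $\to_{CL}$ entirely and works uniformly in both \letj\ and \letk. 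With that tweak your proof covers all four systems as stated.
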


\noi However, although in the \lets\ studied so far the classical behavior is transmitted 
from less complex to more complex formulas, 
the classicality operator \con\ is not;  
that is, the  inferences 
$\cons p \vdash \cons\neg p$ and $\cons p,\cons q \vdash \cons(p \,\#\, q)$ ($\# \in \{\lor, \land \}$), 
for example,  do not hold.  
 This should be clear in \letk\    from the fact that there is no introduction rule for \con, 
 but    the reader can also check this from the six-valued matrices presented in 
 Section~\ref{sec.six.valued.semantics.LETK.decid}, 
 which explicitly display 
the non-deterministic behavior of \con. 
 But to rigorously express   the idea of dividing the sentences of the language into two groups, 
which is an essential 
point of \lets\  (as well as  \lfis\ and  da Costa's $C_n$ hierarchy)  
it would be desirable for the classicality operator \con\  to be transmitted as well.

We find in the literature two ways of establishing the propagation of \con, 
but neither fits  
 the intended interpretation  of \lets.  
 In da Costa's $C_n$ hierarchy, the well-behavior of $A$ (originally represented by $A^\con$) 
propagates as follows:  

\begin{enumerate}[resume]
\item  $\con A \land \con B \vdash \con(A \,\#\, B), \mbox{ for } \#  \in \{ \land, \lor, \to \}. $ \label{prop.newton}
\end{enumerate}

\noi Although   \ref{prop.newton} fits  with   Proposition \ref{propag.cla} above, it places  a too strong condition 
on the  propagation of classicality.  
Indeed, we will see that it is not always necessary that both $\con A$ and $\con B$ hold for concluding $\con (A \,\#\, B)$. 
On the other hand,  in the logic  \textit{Cilo}, an \lfi\ investigated in  \cite[pp.~116ff.]{car.con.2016.book}, consistency propagates as follows:    
\begin{enumerate}[resume]
\item  $\con A \lor \con B \vdash \con(A \,\#\, B), \mbox{ for } \#  \in \{ \land, \lor, \to \}. $ \label{prop.bez}
\end{enumerate}
\noi The condition for propagation in \ref{prop.bez}, however, is too weak in the sense that 
it allows one to conclude $\con (A \,\#\, B)$ in circumstances where it should not be concluded. 
Let us illustrate this by taking a look at how \con\ should be transmitted over $\lor $.

Recall that  $\con A \land A$  and   $\con A \land \neg A$  
are intended to mean  that the  information conveyed, respectively,  by $A$ 
 and by $\neg A$, is considered reliable. 
  In addition, 
  positive and negative reliable  information behaves like truth and falsity in classical logic.  
 As a consequence, from  $\con A \land A$ one should be able to infer  that $A \lor B$ is also 
reliable  for any $B$, no matter whether $\con B$ 
holds or not, and so $\con(A \lor B) \land (A \lor B)$ 
holds. 
For if this were not the case, that is, if both $(A \lor B)$ and $\neg(A \lor B)$ held, 
$\con A$ could not hold, 
since $\neg (A \lor B)$ implies $\neg A$. 
On the other hand, from  $\con A \land \neg A$, it cannot be inferred that $\neg(A \lor B)$ is 
reliable, because 
  to conclude $\neg(A \lor B)$ both $\neg A$ and $\neg B$ are required. Hence, from $\con A \land \neg A$, 
  $\con(A \lor B)$ cannot be inferred.
This    suggests the validity of the following inferences:   
\begin{enumerate}[resume]
\item $\con A, A \vdash \con (A \lor B) \land (A \lor B)$,  \label{inf.1}
\item $\con B, B \vdash \con (A \lor B) \land (A \lor B)$, 
\item $\con A, \neg A, \con  B, \neg B \vdash \con (A \lor B) \land \neg(A \lor B)$, \label{inf.last}  
\end{enumerate}

\noi and so on.   The point of these inferences is that  
positive (resp. negative) 
reliable information behaves like truth  (resp. falsity) 
in  classical logic: in order to have a $A \lor B$ false,  we need both $A$ and $B$ false, 
but $A$ true is enough to conclude 
$A\lor B$ true.\footnote{It is to be noted  that the fact that reliable information is subjected to classical logic,  
 and so  to the rules of preservation of truth,  
 does not mean, of course, that reliable information is  being \textit{identified}   with truth,  
 in the realist sense of the notion of truth that underlies the standard  interpretation 
 of classical logic. 
We are, rather, making the weaker claim that people reason with reliable information as if it were 
true, and so classical logic is appropriate to express the deductive behavior of reliable information.}  
  In the following, in order to   define the system \letkp, 
 we will extend this line of reasoning to  the connectives $\wneg$, 
  $\land$,  and $\to$.

\begin{definition}    \label{propag-rules} For any formula $A$, let  $A^T \defi \cons A \land A$ and $A^F \defi \cons A \land \neg A$.  

\m\mh A natural deduction system for \letkp\ is obtained by adding the following axiom and rules to the  system 
of \letk\ (Definition~\ref{def.ND.letk}):

\begin{center}

\mmmm

$\infer [_{[I \cons]}] {\cons\cons A}{}$ \hspace{12mm} $\infer[_{[I \neg\cons]}] {\cons \neg A}{\cons A}$  \hspace{12mm} $\infer [_{[E \neg\cons]}] {\cons A} {\cons \neg A}$

\mmmm

$\infer [_{[I\land T]}] {(A \land B)^T} {A^T & B^T}$ \hspace{12mm}
$\infer [_{[I\land F]}] {(A \land B)^F} {A^F} \hspace{2 mm} \infer [] {(A \land B)^F} {B^F}$ \hspace{10mm}

\mmmm

$\infer [_{[I\lor T]}] {(A \lor B)^T} {A^T} \hspace{2 mm} \infer [] {(A \lor B)^T} {B^T}$ \hspace{10mm}
$\infer [_{[I\lor F]}] {(A \lor B)^F} {A^F & B^F}$ \hspace{10mm}

\mmmm

$  \infer  [_{[I\to T]}] {(A \to B)^T} {A^F} \hspace{2mm}  \infer [] {(A \to B)^T}  {B^T} $ \hspace{10mm}
$ \infer [_{[I\to F]}]  {(A \to B)^F} {A  & B^F }$

\mmmm

 $\infer [_{[E\land T]}]  {A^T} {(A \land B)^T}  \hspace{2 mm} \infer []  {B^T} {(A \land B)^T}  $ \hspace{10mm}
$\infer [_{[E\land F]}] {C} {(A \land B)^F & \infer*{C} {[A^F]} & \infer*{C} {[B^F]}}$\hspace{6mm}

\mmmm

 $\infer [_{[E\lor T]}] {C} {(A \lor B)^T & \infer*{C} {[A^T]} & \infer*{C} {[B^T]}}$\hspace{10mm}
$\infer [_{[E\lor F]}]  {A^F} {(A \lor B)^F}  \hspace{2 mm} \infer []  {B^F} {(A \lor B)^F}  $ \hspace{8mm}

\mmmm

$\infer [_{[E\to T]}] {C} {(A \to B)^T & \infer*{C} {[A^F]} & \infer*{C} {[B^T]}}$
\hspace{10mm}
$\infer [_{[E\to F]}]  {A} {(A \to B)^F}  \hspace{2 mm} \infer []  {B^F} {(A \to B)^F}  $ \hspace{8mm}
\end{center}

\mm

\mh A deduction of  $A$ from a set of premises $\Gamma$, $\Gamma\vdash_{\letkp} A$, 
is defined as usual  for natural deduction systems. 

\end{definition}

\begin{proposition} \label{der-rules} 
The following  rules are derived in  \letkp:  

\begin{center}
$\infer [_{[Cons]}]  {B} {\cons A & \neg\cons A}$  \hspace{10mm} 
$\infer [_{[Comp]}] {B} {\infer*{B} {[\cons A]} & \infer*{B} {[\neg\cons A]}} $
\end{center}
\begin{proof}
The rules above are obtained in a few steps  from $EXP^\cons$, $PEM^\cons$,  
and $I\cons$. 
\end{proof}
\end{proposition}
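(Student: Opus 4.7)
The plan is to derive both rules by exploiting the new axiom $I\cons$, which makes $\cons\cons A$ available as a theorem (no premises), together with the two rules $EXP^\cons$ and $PEM^\cons$ already present in \letk. The idea is that $\cons\cons A$ behaves as a ``certificate of classicality'' for $\cons A$, so we can apply explosion and excluded middle to $\cons A$ exactly as in classical logic.

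For $[Cons]$, starting from the premises $\cons A$ and $\neg\cons A$, I would first invoke the axiom $I\cons$ to introduce $\cons\cons A$. Then the rule $EXP^\cons$, applied with $\cons A$ in the role of the ``$A$'' occurrence, takes the three formulas $\cons\cons A$, $\cons A$, $\neg\cons A$ and yields an arbitrary conclusion $B$. This is essentially a two-line derivation.

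For $[Comp]$, the strategy is symmetric: introduce $\cons\cons A$ by $I\cons$, then apply $PEM^\cons$ with $\cons A$ in the role of ``$A$'' to obtain the disjunction $\cons A \lor \neg\cons A$. Together with the two sub-derivations of $B$ from the hypotheses $\cons A$ and $\neg\cons A$, a standard application of $\lor E$ discharges both hypotheses and concludes $B$.

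Neither derivation presents any real obstacle; the only point to verify is that the rules $EXP^\cons$ and $PEM^\cons$ of \letk\ are schematic in the formula to which the classicality operator is applied, so that instantiating ``$A$'' with the compound formula $\cons A$ is legitimate. This is immediate from the presentation in Definition~\ref{def.ND.letk}, where the rules are stated schematically in the formula variable. Hence the two derivations go through in a few steps, as asserted.
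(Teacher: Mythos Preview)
Your proposal is correct and matches the paper's approach exactly: the paper's proof simply states that the rules follow in a few steps from $EXP^\cons$, $PEM^\cons$, and $I\cons$, and you have spelled out precisely those steps. The key observation you make---that $I\cons$ provides $\cons\cons A$ so that $EXP^\cons$ and $PEM^\cons$ can be instantiated with $\cons A$ in place of $A$---is the intended argument.
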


\subsection{Valuation semantics for \letkp}
The   rules and the axiom of \letkp\ added to \letk\ induce, in a very natural way, a   bivalued 
semantics, which will be described in what follows  (as we shall see in Proposition~\ref{charact-bival-LETKP}, the definition below can be drastically simplified when expressed in the language of Boolean algebras).

\begin{definition} [Valuation semantics for \letkp] \label{val-sem-propag}  A bivaluation for \letkp\ is a bivaluation $\rho:For(\Sigma) \to {\bf2}$ for \letk\ (Definition~\ref{def-val-LETK}) satisfying, in addition, the following clauses: 

\begin{itemize}  
\item[] (vp1) $\rho(\cons \cons A)=1$;
\item[]  \mbox{(vp2)} $ \rho(\cons \neg A)=\rho(\cons A)$;
\item[] \mbox{(vp3)}  \mbox{If $\rho(\cons A)=\rho(A)=1$ and $\rho(\cons B)=\rho(B)=1$  then $\rho(\cons(A \land B))=1$};
\item[] \mbox{(vp4)}  \mbox{If $\rho(\cons A)=\rho(\neg A)=1$  then $\rho(\cons(A \land B))=1$};
\item[] \mbox{(vp5)}  \mbox{If $\rho(\cons B)=\rho(\neg B)=1$  then $\rho(\cons(A \land B))=1$};
\item[] \mbox{(vp6)}  \mbox{If $\rho(\cons(A \land B))=\rho(A)=\rho(B)=1$  then $\rho(\cons A) = \rho(\cons B)=1$};
\item[] \mbox{(vp7)}   \mbox{If $\rho(\cons(A \land B))=1$, and either $\rho(\neg A)=1$ or $\rho(\neg B)=1$,  then:} \\
  \mbox{\quad \quad \quad  \quad    either $\rho(\cons A) = \rho(\neg A)=1$ or  $\rho(\cons B)=\rho(\neg B)=1$};
\item[] \mbox{(vp8)}   \mbox{If $\rho(\cons A)=\rho(A)=1$  then $\rho(\cons(A \lor B))=1$};
\item[] \mbox{(vp9)}   \mbox{If $\rho(\cons B)=\rho(B)=1$  then $\rho(\cons(A \lor B))=1$};
\item[] \mbox{(vp10)}  \mbox{If $\rho(\cons A)=\rho(\neg A)=1$ and $\rho(\cons B)=\rho(\neg B)=1$   then $\rho(\cons(A \lor B))=1$};
 \item[] \mbox{(vp11)}  \mbox{If $\rho(\cons(A \lor B))=1$, and either $\rho(A)=1$ or $\rho(B)=1$,  then:} \\ 
 \mbox{\quad \quad \quad  \quad   either $\rho(\cons A) = \rho(A)=1$ or  $\rho(\cons B)=\rho(B)=1$};
\item[] \mbox{(vp12)}  \mbox{If $\rho(\cons(A \lor B))=1$ and $\rho(A)=\rho(B)=0$  then $\rho(\cons A) = \rho(\cons B)=1$};
\item[] \mbox{(vp13)}   \mbox{If $\rho(\cons A)=\rho(\neg A)=1$  then $\rho(\cons(A \to B))=1$};
\item[] \mbox{(vp14)}   \mbox{If $\rho(\cons B)=\rho(B)=1$  then $\rho(\cons(A \to B))=1$};
\item[] \mbox{(vp15)}   \mbox{If $\rho(A)=1$ and $\rho(\cons B)=\rho(\neg B)=1$   then $\rho(\cons(A \to B))=1$};
\item[] \mbox{(vp16)}   \mbox{If $\rho(\cons(A \to B))=1$, and either $\rho(A)=0$ or $\rho(B)=1$,  then:} \\ 
 \mbox{\quad\quad\quad\quad  either $\rho(\cons A) =\rho(\neg A)=1$ or  $\rho(\cons B)=\rho(B)=1$};
\item[] \mbox{(vp17)}  \mbox{If $\rho(\cons(A \to B))=1$ and $\rho(A)=\rho(\neg B)=1$  then $\rho(\cons B)=1$}.
\end{itemize}

\end{definition}

Now  we shall prove the soundness and completeness of \letkp\ w.r.t.~the bivalued semantics above. 
The definition of $F$-saturated sets in \letkp\ is analogous to the one for \letk\ (Definition~\ref{def-F-sat-LETK}).

\begin{proposition}\label{F-sat-LETKP} 
Let $\Delta$ be an $F$-saturated set in \letkp. Then, it is a closed theory (that is, $A \in \Delta$ iff $\Delta \vdash_{\letkp} A$) and it satisfies properties (2)-(9) of Proposition~\ref{F-sat-LETK} plus the following:  

\begin{itemize}  
\item[] (vp1$'$) $\cons\cons A \in \Delta$;
\item[] (vp2$'$) $\cons A \in \Delta$  iff $\cons\neg A \in \Delta$;
\item[] (vp3$'$)  If $\cons A \in \Delta$, $A \in \Delta$, $\cons B \in \Delta$ and $B\in \Delta$, then $\cons(A \land B)\in \Delta$;
\item[] (vp4$'$) If $\cons A\in \Delta$ and $\neg A\in \Delta$,  then $\cons(A \land B)\in \Delta$;
\item[] (vp5$'$) If $\cons B\in \Delta$ and $\neg B\in \Delta$,  then $\cons(A \land B)\in \Delta$;
\item[] (vp6$'$) If $\cons(A \land B)\in \Delta$, $A\in \Delta$ and $B\in \Delta$,  then $\cons A\in \Delta$ and $\cons B\in \Delta$;
\item[] (vp7$'$) If $\cons(A \land B)\in \Delta$, and either $\neg A\in \Delta$ or $\neg B\in \Delta$,  then: \\
\mbox{\quad\quad\quad\quad} either $\cons A\in \Delta$ and  $\neg A\in \Delta$ or  $\cons B\in \Delta$ and $\neg B\in \Delta$;
\item[] (vp8$'$) If $\cons A\in \Delta$ and $A\in \Delta$,  then $\cons(A \lor B)\in \Delta$;
\item[] (vp9$'$) If $\cons B\in \Delta$ and $B\in \Delta$,  then $\cons(A \lor B)\in \Delta$;
\item[] (vp10$'$) If $\cons A\in \Delta$, $\neg A\in \Delta$, $\cons B \in \Delta$ and $\neg B \in \Delta$,   then $\cons(A \lor B)\in \Delta$;
\item[] (vp11$'$) If $\cons(A \lor B)\in \Delta$, and either $A\in \Delta$ or $B\in \Delta$,  then: \\
\mbox{\quad\quad\quad\quad} either $\cons A\in \Delta$ and $A\in \Delta$ or  $\cons B\in \Delta$ and $B\in \Delta$;
\item[] (vp12$'$) If $\cons(A \lor B)\in \Delta$, $A\notin \Delta$ and $B \notin  \Delta$,  then $\cons A\in \Delta$ and $\cons B\in \Delta$;
\item[] (vp13$'$) If $\cons A\in \Delta$ and $\neg A\in \Delta$,  then $\cons(A \to B)\in \Delta$;
\item[] (vp14$'$) If $\cons B\in \Delta$ and $B\in \Delta$,  then $\cons(A \to B)\in \Delta$;
\item[] (vp15$'$) If $A\in \Delta$, $\cons B \in \Delta$ and $\neg B\in \Delta$,   then $\cons(A \to B)\in \Delta$;
\item[] (vp16$'$) If $\cons(A \to B) \in \Delta$, and either $A \notin \Delta$ or $B \in \Delta$,  then: \\
\mbox{\quad\quad\quad\quad} either $\cons A \in \Delta$ and  $\neg A \in \Delta$ or  $\cons B \in \Delta$ and $B \in \Delta$;
\item[] (vp17$'$) If $\cons(A \to B) \in \Delta$, $A \in \Delta$ and $\neg B \in \Delta$,  then $\cons B \in \Delta$.

\end{itemize}
\end{proposition}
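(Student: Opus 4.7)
The plan is as follows. The first assertion — that $\Delta$ is a closed theory and satisfies clauses (2)-(9) of Proposition~\ref{F-sat-LETK} — is inherited from the \letk-case: since \letkp\ extends \letk, every $F$-saturated set in \letkp\ is, \emph{a fortiori}, closed under all rules used in the proof of Proposition~\ref{F-sat-LETK}, so that proof transfers verbatim with $\vdash_{\letk}$ replaced by $\vdash_{\letkp}$. The real work is thus to verify the seventeen new clauses (vp1$'$)-(vp17$'$), each of which reflects precisely one rule or pair of rules added in Definition~\ref{propag-rules}.

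The ``direct'' clauses (vp1$'$)-(vp6$'$), (vp8$'$)-(vp10$'$), (vp13$'$)-(vp15$'$), and (vp17$'$) follow immediately from the corresponding introduction or elimination rules together with closure of $\Delta$ under $\vdash_{\letkp}$. For instance, (vp1$'$) is axiom $I\cons$; (vp2$'$) combines $I\neg\cons$ and $E\neg\cons$; for (vp3$'$) one has $A^T,B^T \in \Delta$ and applies $I\land T$ to obtain $(A\land B)^T\in \Delta$, whence $\cons(A\land B)\in\Delta$; (vp4$'$), (vp5$'$) use $I\land F$; (vp6$'$) reconstructs $A\land B\in \Delta$ via $\land I$ and then applies $E\land T$; (vp17$'$) reconstructs $\neg(A\to B)\in\Delta$ via $\neg\to I$ and applies $E\to F$; and analogously for the remaining $\lor$- and $\to$-clauses. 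No appeal to $F$-saturation beyond closure is required.

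The delicate clauses are the disjunctive ones (vp7$'$), (vp11$'$), (vp12$'$), (vp16$'$), which exploit the improper rules $E\land F$, $E\lor T$, $E\to T$ together with $F$-saturation in a uniform pattern. As a prototype, consider (vp11$'$): given $\cons(A\lor B), A \in \Delta$, one obtains $A\lor B\in\Delta$ by $\lor I$ and thence $(A\lor B)^T\in\Delta$ by $\land I$; were $A^T\notin\Delta$ and $B^T\notin\Delta$, saturation would give $\Delta, A^T\vdash_{\letkp} F$ and $\Delta, B^T\vdash_{\letkp} F$, and an application of $E\lor T$ to $(A\lor B)^T$ would yield $\Delta\vdash_{\letkp} F$, contradicting $\Delta\nvdash_{\letkp} F$. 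Cases (vp7$'$) and (vp16$'$) are entirely analogous using $E\land F$ and $E\to T$.

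Clause (vp12$'$) needs one extra preparatory step: from $A\notin\Delta$, $B\notin\Delta$ one must first extract $\neg A, \neg B\in\Delta$. This is achieved by applying $PEM^\cons$ to $\cons(A\lor B)$, obtaining $(A\lor B)\lor\neg(A\lor B)\in\Delta$; item (3) of Proposition~\ref{F-sat-LETK} together with the hypothesis forces $\neg(A\lor B)\in\Delta$, and $\neg\lor E$ supplies $\neg A,\neg B\in\Delta$, so that $(A\lor B)^F\in\Delta$ and $E\lor F$ deliver the conclusion. The main obstacle is not conceptual but bookkeeping: each of the seventeen clauses must be matched against its generating rule(s) and, in the disjunctive cases, the standard saturation trick must be instantiated with the correct elimination scheme. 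No new technique beyond those already used for \letk\ is needed.
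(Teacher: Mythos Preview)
Your proposal is correct and follows the same approach the paper intends: the paper's own proof is a one-line remark that closure and items (2)--(9) carry over from \letk, while (vp1$'$)--(vp17$'$) ``follow easily from the axiom and rules added to \letk\ in Definition~\ref{propag-rules}'', with details left to the reader. You have simply filled in those details, including the saturation-by-contradiction argument for the disjunctive clauses; the only minor remark is that for (vp16$'$) the preparatory step uses property~(4) of Proposition~\ref{F-sat-LETK} (rather than a direct introduction rule) to secure $A\to B\in\Delta$, but this is clearly available from the first part of the argument.
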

\begin{proof}
It is immediate to see that an $F$-saturated set is a closed theory in any Tarskian logic.
The proof of items (2)-(9) follows from \letk.  
The  proofs of items vp$1'$ to vp$17'$ follow easily from the axiom and rules added to \letk\ in Definition~\ref{propag-rules}. Details are left to the reader.
\end{proof}

\begin{corollary} \label{val-F-sat}
Let $\Delta$ be a set of formulas which is  $F$-saturated in \letkp. Let $\rho_\Delta:For(\Sigma) \to {\bf 2}$ be the characteristic function of $\Delta$, that is: for every formula $A$, $\rho_\Delta(A)=1$ iff $A \in \Delta$ (iff $\Delta \vdash_{\letkp} A$, by Proposition~\ref{F-sat-LETKP}). Then, $\rho_\Delta$ is a bivaluation for \letkp.
\begin{proof}
It is an immediate consequence of Proposition~\ref{F-sat-LETKP}.  
\end{proof}\end{corollary}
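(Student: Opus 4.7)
The plan is essentially a translation exercise, since Proposition~\ref{F-sat-LETKP} has already done all the heavy lifting. I would simply observe that by definition of the characteristic function $\rho_\Delta$, the statement ``$A \in \Delta$'' is equivalent to ``$\rho_\Delta(A) = 1$'', so every membership clause about $\Delta$ in Proposition~\ref{F-sat-LETKP} becomes a numerical clause about $\rho_\Delta$.

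More concretely, I would proceed in two blocks. First, I would check that $\rho_\Delta$ satisfies the eight clauses (v1)--(v8) of Definition~\ref{def-val-LETK}, so that $\rho_\Delta$ is a bivaluation for \letk: clauses (v1)--(v7) follow directly from items (2)--(8) of Proposition~\ref{F-sat-LETK} (inherited by \letkp\ via Proposition~\ref{F-sat-LETKP}), after rephrasing ``$X \in \Delta$'' as ``$\rho_\Delta(X)=1$''; clause (v8) is exactly item~(9), noting that the biconditional ``either $A \in \Delta$ or $\neg A \in \Delta$, but not both'' given $\cons A \in \Delta$ is literally ``if $\rho_\Delta(\cons A)=1$, then $\rho_\Delta(\neg A)=1$ iff $\rho_\Delta(A)=0$''.

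Second, I would verify the propagation clauses (vp1)--(vp17) of Definition~\ref{val-sem-propag} in a one-to-one fashion against items (vp$1'$)--(vp$17'$) of Proposition~\ref{F-sat-LETKP}. For instance, (vp1$'$) says $\cons\cons A \in \Delta$, which immediately gives $\rho_\Delta(\cons\cons A)=1$, i.e.\ (vp1); (vp2$'$) is the ``iff'' translating into $\rho_\Delta(\cons\neg A)=\rho_\Delta(\cons A)$, matching (vp2); and so on through (vp17$'$)/(vp17), which is the same statement modulo the syntactic relabelling.

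There is no genuine obstacle here: each propagation clause in Definition~\ref{val-sem-propag} was designed precisely to mirror the corresponding natural-deduction rule introduced in Definition~\ref{propag-rules}, and Proposition~\ref{F-sat-LETKP} has already verified that these rules force the corresponding membership closure conditions on any $F$-saturated set. The only small thing to be careful about is the direction of reading in clauses like (vp12), where ``$\rho(A)=\rho(B)=0$'' corresponds to ``$A \notin \Delta$ and $B \notin \Delta$'' (as in vp$12'$), which uses the ``only if'' direction of the characteristic function. Once this is noted, the corollary follows as an immediate consequence of Proposition~\ref{F-sat-LETKP}, as claimed.
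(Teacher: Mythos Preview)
Your proposal is correct and takes exactly the same approach as the paper, which simply states that the result is an immediate consequence of Proposition~\ref{F-sat-LETKP}. You have spelled out in detail the translation that the paper leaves implicit, and your remarks about the one-to-one correspondence between the membership clauses and the bivaluation clauses are precisely what ``immediate consequence'' means here.
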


\begin{theorem}  [Soundness and completeness of \letkp\ w.r.t. bivaluation semantics] \label{adeq-LETKP-bival}  \ \\
For every set of formulas $\Gamma \cup \{A\} \subseteq For(\Sigma)$: 
$\Gamma \vdash_{\letkp} A$ \ iff \  $\Gamma\models_{\letkp}^2 A$.
\end{theorem}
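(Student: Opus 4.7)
The plan is to follow the exact template of Theorem~\ref{adeq-LETK-bival}, with both directions reducing to verifications already prepared in Proposition~\ref{F-sat-LETKP} and Corollary~\ref{val-F-sat}. Since \letkp\ extends \letk\ by the single axiom $I\cons$ together with fifteen propagation rules, and every bivaluation for \letkp\ is by definition also a bivaluation for \letk, the proof splits cleanly into an inherited part and a residual part.

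For the soundness direction, I fix a bivaluation $\rho$ for \letkp\ and argue by induction on the length of derivations. The cases corresponding to axioms and rules inherited from \letk\ are discharged exactly as in the soundness part of Theorem~\ref{adeq-LETK-bival}. It then suffices to show, for the axiom $I\cons$ and each of the fifteen new rules of Definition~\ref{propag-rules}, that $\rho$ validates them. Unfolding the abbreviations $A^T \defi \cons A \land A$ and $A^F \defi \cons A \land \neg A$, every such obligation becomes one of the clauses (vp1)--(vp17) of Definition~\ref{val-sem-propag}: the axiom $I\cons$ corresponds to (vp1); the pair $I\neg\cons$, $E\neg\cons$ to (vp2); the four $\land$-rules to (vp3)--(vp7); and similarly for the $\lor$- and $\to$-rules. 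The improper rules such as $E\land F$, $E\lor T$ and $E\to T$ are treated exactly as $\lor E$ was in the soundness part of Theorem~\ref{adeq-LETK-bival}, applying the induction hypothesis under the extra assumption that the discharged hypothesis is satisfied by $\rho$.

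For completeness, I mirror the \letk\ argument. Assume $\Gamma \nvdash_{\letkp} A$. Since \letkp\ is Tarskian and finitary, being defined by a natural deduction system, the Lindenbaum--\L o\'s principle recalled in Remark~\ref{Lindenbaum-Los} yields an $A$-saturated set $\Delta \supseteq \Gamma$ in \letkp. By Corollary~\ref{val-F-sat}, the characteristic function $\rho_\Delta$ is then a bivaluation for \letkp\ which satisfies $\rho_\Delta(B) = 1$ for every $B \in \Gamma$, because $\Gamma \subseteq \Delta$, while $\rho_\Delta(A) = 0$, because $\Delta$ is a closed theory with $\Delta \nvdash_{\letkp} A$ and hence $A \notin \Delta$. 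This gives $\Gamma \not\models_{\letkp}^2 A$ by contraposition. The only genuine obstacle is bookkeeping in the soundness step, namely matching each of the fifteen new rules to the appropriate clause (vp1)--(vp17) after unfolding $A^T$ and $A^F$; the substantive combinatorics have already been absorbed into Proposition~\ref{F-sat-LETKP}.
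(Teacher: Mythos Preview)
Your proposal is correct and follows essentially the same approach as the paper: soundness by checking that the new axiom and propagation rules are validated by the clauses (vp1)--(vp17), and completeness via Lindenbaum--\L o\'s plus Corollary~\ref{val-F-sat}. Your write-up in fact spells out the rule-to-clause matching more explicitly than the paper does; the minor discrepancy in your count of the propagation rules is inconsequential.
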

\begin{proof} \ \\
{\em `Only if' part (soundness)}: By Theorem~\ref{adeq-LETK-bival}, it suffices proving that any bivaluation for \letkp\ satisfies the axiom and rules of Definition~\ref{propag-rules}. But this is immediate, taking into account the other properties inherited from bivaluations for \letk. The details are left to the reader.\\[1mm]
{\em `If' part (completeness)}: It is also an extension of the proof of completeness of \letk\ w.r.t. bivaluations. Thus, suppose that $\Gamma \nvdash_{\letkp} A$.  As observed in Remark~\ref{Lindenbaum-Los}, being \letkp\ a Tarskian and finitary logic, there exists a set $\Delta$ such that $\Gamma \subseteq \Delta$ and $\Delta$ is $A$-saturated in \letkp. By Corollary~\ref{val-F-sat}, the characteristic function $\rho_\Delta$ of  $\Delta$ is a bivaluation for \letkp\ such that $\rho_\Delta(B)=1$ for every $B \in \Gamma$, but $\rho_\Delta(A)=0$. This shows that $\Gamma \not\models_{\letkp}^2 A$.
\end{proof}

\subsection{A six-valued semantics for \letkp}  \label{sec.six.valued.letkp}

 Let us recall the Nmatrix for \letk\ introduced in Section~\ref{sec.six.valued.semantics.LETK.decid}. 
 The snapshots $z=(z_1,z_2,z_3)$ represent triples of the form
 $(\rho(A), \rho(\neg A),\rho(\cons A))$, for a bivaluation $\rho$ for \letk\ and a formula $A$. 
 With this  in mind, and taking into account the definition of the six snapshots of $\textsc{B}_{\letk}$, 
 in what follows we will see how the clauses of Definition~\ref{val-sem-propag} 
 impose  restrictions on the multioperators  of the Nmatrix of \letk, 
 which turn out to be deterministic in \letkp.  

\begin{proposition} \label{prop.reasoning.6.val}
Clauses (vp3)-(vp7) of Definition~\ref{val-sem-propag} 
 impose the following restrictions  to the multioperator $\tilde{\land}$ of the Nmatrix of \letk: 
\begin{itemize} \setl 
\item[] (vp3): $T \,\tilde{\land}\, T=T$; 
\item[] (vp4)-(vp5): $F \,\tilde{\land}\, z=z \,\tilde{\land}\, F=F$, for every $z$; 
\item[] (vp6):  $z \,\tilde{\land}\, w=T$ implies  that $z=w=T$. 
Hence,  $T \,\tilde{\land}\, T_0=T_0 \,\tilde{\land}\, T=T_0 \,\tilde{\land}\, T_0=T_0$; 
\item[] (vp7):  $z \,\tilde{\land}\, w=F$ implies  that $z=F$ or $w=F$. 
Hence,  $\bo \,\tilde{\land}\, \nei=\nei \,\tilde{\land}\, \bo=F_0 \,\tilde{\land}\, z=z \,\tilde{\land}\, F_0=F_0$, for $z \neq F$.
\end{itemize}
\noi This shows that  $\tilde{\land}$ is deterministic in  \letkp\ 
and can be defined as 
\begin{itemize}
\item[(i)] $(z_1,z_2,z_3)\,\tilde{\land}\,(w_1,w_2,w_3) =  (z_1\sqcap w_1,z_2\sqcup w_2,(z_1 \sqcap z_3 \sqcap w_1 \sqcap w_3) \sqcup (z_2 \sqcap z_3) \sqcup (w_2 \sqcap w_3))$. 
\end{itemize} 
   Concerning  $\tilde{\lor}$, clauses (vp8)-(v12) impose   the following restrictions: 
\begin{itemize} \setl  
\item[] (vp8)-(vp9): $T \,\tilde{\lor}\, z=z \,\tilde{\lor}\, T=T$, for every $z$; 
\item[]  (vp10): $F \,\tilde{\lor}\, F=F$; 
\item[]  (vp11):  $z \,\tilde{\lor}\, w=T$ implies  that $z=T$ or $w=T$. Hence,  $\bo \,\tilde{\lor}\, \nei=\nei \,\tilde{\lor}\, \bo=T_0 \,\tilde{\lor}\, z=z \,\tilde{\lor}\, T_0=T_0$, for $z \neq T$; 
\item[]  (vp12):  $z \,\tilde{\lor}\, w=F$ implies  that $z=w=F$. Hence,  $F_0 \,\tilde{\lor}\, F_0=F_0 \,\tilde{\lor}\, F=F \,\tilde{\lor}\, F_0=F_0$.
\end{itemize}
\noi Thus, $\tilde{\lor}$ turns out to be deterministic in  \letkp\   
 and can be defined as 
\begin{itemize}
\item[(ii)] $(z_1,z_2,z_3)\,\tilde{\lor}\,(w_1,w_2,w_3)  =  (z_1\sqcup w_1,z_2\sqcap w_2, (z_2 \sqcap z_3 \sqcap w_2 \sqcap w_3) \sqcup (z_1 \sqcap z_3) \sqcup (w_1 \sqcap w_3))$. 
\end{itemize}
   With respect to  $\tilde{\to}$, clauses (vp13)-(v17) impose  the following restrictions:
\begin{itemize} \setl  
\item[]   (vp13): $F \,\tilde{\to}\, z=T$, for every $z$;
\item[] (vp14): $z \,\tilde{\to}\, T=T$, for every $z$;
\item[] (vp15): $z \,\tilde{\to}\, F=F$, for $z \in \textrm{D}$. Hence,  $T \,\tilde{\to}\, F=T_0 \,\tilde{\to}\, F=\bo \,\tilde{\to}\, F=F$;
\item[] (vp16): $z \,\tilde{\to}\, w=T$ implies that $z=F$ or $w=T$.  Hence, $\nei \,\tilde{\to}\, w=F_0 \,\tilde{\to}\, w=T_0$, for $w \neq T$; and  $z\,\tilde{\to}\, T_0=T_0$, for $z \in \textrm{D}$; 
\item[] (vp17):  $z \,\tilde{\to}\, w=F$ implies  that $w=F$. Hence, $z \,\tilde{\to}\, F_0=F_0$, for $z \in \textrm{D}$.
\end{itemize}
 \mh Thus, $\tilde{\to}$ also turns out to be deterministic, defined as 
\begin{itemize}
\item[(iii)]  $(z_1,z_2,z_3)\,\tilde{\to}\,(w_1,w_2,w_3)  =  (z_1\Rightarrow w_1,z_1\sqcap w_2, (z_1 \sqcap w_2 \sqcap w_3) \sqcup (z_2 \sqcap z_3) \sqcup (w_1 \sqcap w_3))$. 
\end{itemize}
   Given the clause (vp2), 
the third coordinate of the snapshot will not 
 be changed by the operation $\tilde{\neg}$. 
 For this reason,   
 the negation $\tilde{\neg}$ in \letkp\  also turns out to be deterministic: 
 $\tilde{\neg} T = F$, $\tilde{\neg} F = T$, $\tilde{\neg} T_0 = F_0$, and $\tilde{\neg} F_0 = T_0$.   
That is, it is defined as  
 \begin{itemize}
\item[(iv)]   $\tilde{\neg}\,(z_1,z_2,z_3) = (z_2,z_1,z_3)$.
\end{itemize} 
   Finally, the classicality operator $\tilde{\circ}$ of \letkp, by virtue of (vp1), which fixes the value of the third coordinate, 
 also becomes deterministic: for $z = T$ or $z=F$, $\tilde{\con} z = F$;        
 otherwise $\tilde{\con} z = F$.  
 That is, it is defined as:  
 \begin{itemize}
 \item[(v)] $\tilde{\circ}\,(z_1,z_2,z_3) = (z_3,\sneg z_3,1)$.
 \end{itemize}
  \end{proposition}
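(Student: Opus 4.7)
The plan is to systematically translate each clause of Definition~\ref{val-sem-propag} into a constraint on the third coordinate of snapshot outputs, then verify that the conjunction of all such constraints reduces the non-deterministic multioperators of $\mathcal{M}_{\letk}$ (Definition~\ref{defNmatLETK}) to the deterministic ones (i)-(v). I would organize the proof by connective.

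First I would dispatch the unary operators, where the argument is immediate. For $\tilde{\neg}$, clause (vp2) is literally the equation $\rho(\cons \neg A) = \rho(\cons A)$, so the ``$\_$'' in the third coordinate of $\tilde{\neg}(z_1,z_2,z_3)$ is forced to equal $z_3$, yielding (iv). For $\tilde{\cons}$, clause (vp1) forces the third coordinate of $\tilde{\cons} z$ to be $1$. The second coordinate $u_2 = \rho(\neg \cons A)$ is then pinned down by the snapshot conditions $u_3 \leq u_1 \sqcup u_2$ and $u_1 \sqcap u_2 \sqcap u_3 = 0$: with $u_1 = z_3$ and $u_3 = 1$, these force $u_2 = \sneg z_3$, giving (v).

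For the binary operators I would proceed uniformly. The sufficient conditions (vp3)-(vp5), (vp8)-(vp10), and (vp13)-(vp15) for the third coordinate to be $1$ translate into elementary meets of the coordinates of the inputs: for $\tilde{\land}$, these are $z_1 \sqcap z_3 \sqcap w_1 \sqcap w_3$ (from (vp3)), $z_2 \sqcap z_3$ (from (vp4)), and $w_2 \sqcap w_3$ (from (vp5)); their join is the candidate value for the third coordinate appearing in (i). Analogous expressions arise for $\tilde{\lor}$ and $\tilde{\to}$. Next I would show that clauses (vp6)-(vp7), (vp11)-(vp12), and (vp16)-(vp17) are precisely the contrapositive \emph{necessary} conditions: for every pair $(z,w) \in \textsc{B}_{\letk}^2$ in which none of the sufficient conditions is satisfied, one of these clauses forces the third coordinate to be $0$. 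Combining these two directions yields determinism. The specific case analysis displayed in the statement (for example $T \tilde{\land} T_0 = T_0$, $\bo \tilde{\land} \nei = F_0$, $\nei \tilde{\to} w = T_0$ for $w \neq T$, etc.) is obtained by running through the $36$ combinations in $\textsc{B}_{\letk}^2$ for each binary connective and checking which of the dichotomy's sides applies.

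The only technical nontriviality is confirming that the candidate Boolean formulas for the third coordinate in (i)-(iii) always produce values compatible with the snapshot constraint on the output triple, i.e., $u_3 \leq u_1 \sqcup u_2$ and $u_1 \sqcap u_2 \sqcap u_3 = 0$, where $u_1, u_2$ are as in Definition~\ref{defNmatLETK}. I expect this to be the main obstacle, because it requires simultaneously exploiting the input snapshot constraints $z_3 \leq z_1 \sqcup z_2$, $z_1 \sqcap z_2 \sqcap z_3 = 0$ (and similarly for $w$) together with Boolean distributivity. In practice, since Boolean identities in $\mathcal{B}_2$ coincide with those of the full variety (Remark~\ref{val-Boole}(2)), it suffices to verify these compatibilities by a finite truth-table-style check over the six admissible snapshots on each side; the inadmissible triples $(0,0,1)$ and $(1,1,1)$ never appear among outputs precisely because of the constraints just mentioned. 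Once this verification is complete, the deterministic formulas (i)-(v) are established and the proposition follows.
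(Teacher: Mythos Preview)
Your proposal is correct and follows essentially the same approach as the paper: translating each semantic clause (vp$i$) into a constraint on the third coordinate of the output snapshot, using (vp3)--(vp5), (vp8)--(vp10), (vp13)--(vp15) as sufficient conditions and (vp6)--(vp7), (vp11)--(vp12), (vp16)--(vp17) as the contrapositive necessary conditions, then reading off the deterministic tables. The paper presents this reasoning inline within the proposition statement rather than as a separate proof, and it leaves the snapshot-compatibility check for the closed formulas (i)--(iii) implicit (it is verified later when these formulas reappear in Definition~\ref{twist-BA}), whereas you make this step explicit; but the underlying argument is the same.
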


\mmm

The reasoning above yields the following deterministic  six-valued matrix for \letkp, obtained 
by applying the corresponding restrictions to  the six-valued Nmatrix for \letk:

\begin{definition} \ \label{def.six.val.LETKP} Let  $\mathcal{M}_6$ be the six-valued logical matrix with domain $\textsc{B}_{\letk}$, the set of designated values $\textrm{D}=\{T,T_0,\bo\}$, and the operations given by the tables below.

\mm

\begin{center}
\begin{tabular}{|c|c|c|c|c|c|c|}
\hline
 $\tilde{\wedge}$ & $T$  & $T_0$   & $\bo$ & $\nei$ & $F_0$  & $F$\\[1mm]
 \hline \hline
    $T$    & $T$  & $T_0$ & $\bo$ & $\nei$ & $F_0$ & $F$   \\[1mm] \hline
     $T_0$    & $T_0$  & $T_0$ & $\bo$ & $\nei$ & $F_0$ & $F$  \\[1mm] \hline
     $\bo$    & $\bo$  & $\bo$ & $\bo$ & $F_0$ & $F_0$ & $F$  \\[1mm] \hline
     $\nei$    & $\nei$  & $\nei$ & $F_0$ & $\nei$ & $F_0$ & $F$  \\[1mm] \hline
     $F_0$    & $F_0$  & $F_0$ & $F_0$ & $F_0$ & $F_0$ & $F$  \\[1mm] \hline
     $F$    & $F$  & $F$ & $F$ & $F$ & $F$ & $F$  \\[1mm] \hline
\end{tabular}
\hspace{1cm}
\begin{tabular}{|c|c|c|c|c|c|c|}
\hline
 $\tilde{\vee}$ & $T$  & $T_0$  & $\bo$ & $\nei$ & $F_0$  & $F$ \\[1mm]
 \hline \hline
    $T$    & $T$  & $T$ & $T$ & $T$ & $T$ & $T$   \\[1mm] \hline
     $T_0$    & $T$  & $T_0$ & $T_0$ & $T_0$ & $T_0$ & $T_0$   \\[1mm] \hline
     $\bo$    & $T$  & $T_0$ & $\bo$ & $T_0$ & $\bo$ & $\bo$   \\[1mm] \hline
     $\nei$    & $T$  & $T_0$ & $T_0$ & $\nei$ & $\nei$ & $\nei$   \\[1mm] \hline
     $F_0$    & $T$  & $T_0$ & $\bo$ & $\nei$ & $F_0$ & $F_0$   \\[1mm] \hline
     $F$    & $T$  & $T_0$ & $\bo$ & $\nei$ & $F_0$ & $F$  \\[1mm] \hline
\end{tabular}

\

\

\begin{tabular}{|c|c|c|c|c|c|c|}
\hline
 $\tilde{\to}$ & $T$  & $T_0$  & $\bo$ & $\nei$ & $F_0$  & $F$ \\[1mm]
 \hline \hline
    $T$    & $T$  & $T_0$ & $\bo$ & $\nei$ & $F_0$ & $F$   \\[1mm] \hline
     $T_0$    & $T$  & $T_0$ & $\bo$ & $\nei$ & $F_0$ & $F$  \\[1mm] \hline
     $\bo$    & $T$  & $T_0$ & $\bo$ & $\nei$ & $F_0$ & $F$  \\[1mm] \hline
     $\nei$    & $T$  & $T_0$ & $T_0$ & $T_0$ & $T_0$ & $T_0$  \\[1mm] \hline
     $F_0$    & $T$  & $T_0$ & $T_0$ & $T_0$ & $T_0$ & $T_0$  \\[1mm] \hline
     $F$    & $T$  & $T$ & $T$ & $T$ & $T$ & $T$  \\[1mm] \hline
\end{tabular}
\hspace{1.5cm}
\begin{tabular}{|c||c|} \hline
$\quad$ & $\tilde{\neg}$ \\[1mm]
 \hline \hline
    $T$   & $F$    \\[1mm] \hline
     $T_0$   & $F_0$    \\[1mm] \hline
     $\bo$   &$\bo$    \\[1mm] \hline
     $\nei$   & $\nei$    \\[1mm] \hline
     $F_0$   & $T_0$    \\[1mm] \hline
     $F$   & $T$    \\[1mm] \hline
\end{tabular}
\hspace{1.5cm}
\begin{tabular}{|c||c|}
\hline
 $\quad$ & $\tilde{\circ}$ \\[1mm]
 \hline \hline
    $T$   & $T$    \\[1mm] \hline
     $T_0$   & $F$     \\[1mm] \hline
     $\bo$   & $F$     \\[1mm] \hline
     $\nei$   & $F$     \\[1mm] \hline
     $F_0$   & $F$     \\[1mm] \hline
     $F$   & $T$    \\[1mm] \hline
\end{tabular}
\end{center}
\end{definition}

\mm

\begin{remark} \label{twist-LETKP} \  \\[1mm]
(i) Let $\mathcal{A}_{\letk}$ be the six-valued multialgebra underlying the Nmatrix  $\mathcal{M}_{\letk}$ for \letk\ 
and $\mathcal{A}_6$  the six-valued algebra  underlying the matrix 
$\mathcal{M}_6$. 
Since every algebra is  a multialgebra in which 
each entry of each multioperator returns a singleton set, it is immediate
to see that  $\mathcal{A}_6$ is a submultialgebra of $\mathcal{A}_{\letk}$. 
Assume now that $\tilde{\#}_6$ and $\tilde{\#}_{\letk}$ denote, respectively, the interpretation of the connective $\#$
 in the multialgebras $\mathcal{A}_6$ and $\mathcal{A}_{\letk}$. 
Thus,    $\tilde{\#}_6 \, z \subseteq \tilde{\#}_{\letk}\,z$ and $z\,\tilde{\#'}_6\,w \subseteq z\,\tilde{\#'}_{\letk}\,w$ 
for every $z,w \in \textsc{B}_{\letk}$, $\# \in \{\neg,\cons\}$ and $\#' \in \{\land,\lor,\to\}$. 
Therefore, 
any valuation over the matrix $\mathcal{M}_6$ is a valuation over the Nmatrix $\mathcal{M}_{\letk}$.

\m\mh (ii)  Recall from Definition~\ref{propag-rules} that   $A^T \defi \con A \land A$ and $A^F \defi \con A \land \neg A$, for every formula $A$. 
Accordingly, let   $z^T \defi \tilde{\con} z \ \tilde{\land} \ z$ and  $z^F \defi \tilde{\con} z \ \tilde{\land} \ \tilde{\neg} z$, 
for every $z \in B_{\letk}$, where the operations correspond to  $\mathcal{M}_6$. 
 It is easy to see that, for every  $z \in B_{\letk}$: $z^T = T$ if $z=T$, and $z^T=F$ otherwise; 
 and  $z^F = T$ if $z=F$, and $z^F=F$ otherwise.  
Clearly, for every valuation $v$  over the logical matrix  $\mathcal{M}_6$, $v(A^T) = v(A)^T$ and $v(A^F) = v(A)^F$. 
\end{remark}

\subsection{Soundness and completeness of the six-valued semantics of \letkp}
The next task is to prove the soundness and completeness   of \letkp\ w.r.t. the semantics given by $\mathcal{M}_6$. 
The proof is obtained by adapting  to \letkp\ the proof of Theorems~\ref{sound-Nmat-LETK} and~\ref{comp-Nmat-LETK}. 
The definition of $F$-saturated sets in \letkp\ is analogous to the one for \letk\ (recall Definition~\ref{def-F-sat-LETK}).

\begin{proposition} \label{lemma-sound-LETKP}
For every valuation $v$  over the matrix $\mathcal{M}_{6}$  the mapping $\rho_v:For(\Sigma) \to {\bf2}$ given by $\rho_v(A)=v(A)_1$ is a bivaluation for \letkp\ such that: $\rho_v(A)=1$ \ iff \ $v(A) \in \textrm{D}$, for every formula $A$.
\end{proposition}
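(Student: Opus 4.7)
The plan is to reduce the statement to the verification of the seventeen propagation clauses (vp1)--(vp17) of Definition~\ref{val-sem-propag} and then read these clauses off the deterministic tables of $\mathcal{M}_6$. By Remark~\ref{twist-LETKP}(i), every valuation $v$ over $\mathcal{M}_6$ is automatically a legal valuation over the Nmatrix $\mathcal{M}_{\letk}$, because $\mathcal{A}_6$ is a submultialgebra of $\mathcal{A}_{\letk}$. Hence Proposition~\ref{lemma-sound-LETK} applies directly and gives that $\rho_v$ satisfies clauses (v1)--(v8) of Definition~\ref{def-val-LETK}, and also the biconditional $\rho_v(A)=1$ iff $v(A) \in \textrm{D}$, since $\textrm{D}=\{z \in \textsc{B}_{\letk} : z_1=1\}$ by Definition~\ref{defNmatLETK}. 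So only the propagation clauses remain.

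Clauses (vp1) and (vp2) I would treat first, as they are immediate from the explicit formulas $\tilde{\cons}(z_1,z_2,z_3) = (z_3, \sneg z_3, 1)$ and $\tilde{\neg}(z_1,z_2,z_3) = (z_2,z_1,z_3)$ in Proposition~\ref{prop.reasoning.6.val}(iv)--(v). Indeed, $\rho_v(\cons\cons A) = v(\cons\cons A)_1 = v(\cons A)_3 = 1$ since the third coordinate of any $\tilde{\cons}$ is always $1$; and $\rho_v(\cons\neg A) = v(\neg A)_3 = v(A)_3 = \rho_v(\cons A)$ since $\tilde{\neg}$ preserves the third coordinate.

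For clauses (vp3)--(vp17) I would use the third-coordinate Boolean formulas of Proposition~\ref{prop.reasoning.6.val}(i)--(iii). For instance, the third coordinate of $v(A)\,\tilde{\land}\,v(B)$ equals $(v(A)_1 \sqcap v(A)_3 \sqcap v(B)_1 \sqcap v(B)_3) \sqcup (v(A)_2 \sqcap v(A)_3) \sqcup (v(B)_2 \sqcap v(B)_3)$. The hypothesis of (vp3) forces the first disjunct to $1$, while those of (vp4) and (vp5) force the second and third disjuncts respectively, so in each case $\rho_v(\cons(A \land B))=1$. Conversely, in clauses (vp6) and (vp7) one starts from $v(A \land B)_3 = 1$ --- that is, from the fact that at least one of the three disjuncts equals $1$ --- and uses the additional side hypothesis (about $v(A)_1,v(B)_1$, respectively $v(A)_2,v(B)_2$) to identify which disjunct is active and to extract the asserted conclusion. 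The same pattern handles (vp8)--(vp12) for $\tilde{\lor}$ and (vp13)--(vp17) for $\tilde{\to}$.

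The main obstacle is thus not conceptual but merely bookkeeping: the seventeen clauses must each be matched against one of the three disjuncts in the corresponding third-coordinate formula. However, since the formulas of Proposition~\ref{prop.reasoning.6.val} were engineered precisely so as to encode the propagation clauses, no non-trivial Boolean identity is needed, and each of the remaining verifications reduces either to observing which disjunct is forced to $1$ by the hypothesis, or to a short three-way case split on which disjunct is active when the conclusion requires extracting information about $\rho_v(\cons A)$ or $\rho_v(\cons B)$.
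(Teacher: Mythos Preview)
Your proposal is correct and follows essentially the same plan as the paper: reduce to the \letk\ case via Remark~\ref{twist-LETKP}(i) and Proposition~\ref{lemma-sound-LETK}, then verify (vp1)--(vp17) one by one. The only presentational difference is that the paper argues case-by-case through the six truth-values (e.g., ``$v(A)=v(B)=T$ hence $T\,\tilde{\land}\,T=T$''), whereas you work directly with the Boolean third-coordinate formulas of Proposition~\ref{prop.reasoning.6.val}; these are of course equivalent, since those formulas encode the tables. One small point you gloss over: in the ``extraction'' clauses (vp6), (vp7), (vp11), (vp12), (vp16), (vp17), the side hypothesis alone does not single out the active disjunct---you must also invoke the snapshot constraints $z_1\sqcap z_2\sqcap z_3=0$ and $z_3\leq z_1\sqcup z_2$ on $v(A),v(B)\in\textsc{B}_{\letk}$ to rule out the incompatible disjuncts (e.g., in (vp6), if $v(A)_1=1$ and the second disjunct $v(A)_2\sqcap v(A)_3$ were $1$, then $v(A)\notin\textsc{B}_{\letk}$). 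This is routine, but worth making explicit.
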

\begin{proof}  As noted in Remark~\ref{twist-LETKP}(i), any valuation over the matrix $\mathcal{M}_{6}$ is a valuation over the Nmatrix $\mathcal{M}_{\letk}$. Thus, given a valuation $v$  over $\mathcal{M}_{6}$, the function $\rho_v$ defined as above is a bivaluation for \letk, by Proposition~\ref{lemma-sound-LETK}. It remains to prove that $\rho_v$ satisfies clauses (vp1)-(vp17) from Definition~\ref{val-sem-propag}. Thus, let $A,B \in For(\Sigma)$. Then $\rho_v(\cons\cons A)=v(\cons A)_3=(\tilde{\cons}\,v(A))_3=1$, by definition of $\mathcal{M}_{6}$. This shows that $\rho_v$ satisfies (vp1). Using again the definition of $\mathcal{M}_{6}$, $\rho_v(\cons\neg A)=v(\neg A)_3=(\tilde{\neg}\,v(A))_3=v(A)_3=\rho_v(\cons A)$. 
In order to prove that $\rho_v$ satisfies (vp3), suppose that $\rho_v(\cons A)=\rho_v(A)=1$ and  $\rho_v(\cons B)=\rho_v(B)=1$. Then $v(A)_3=v(A)_1=1$ and  $v(B)_3=v(B)_1=1$, that is, $v(A)=v(B)=T$. Hence, $v(A \land B)=v(A)\,\tilde{\land}\,v(B)=T\,\tilde{\land}\,T=T$. From this, $\rho_v(\cons(A \land B))=v(A \land B)_3=1$. For (vp4), suppose that $\rho_v(\cons A)=\rho_v(\neg A)=1$. Then $v(A)_3=v(A)_2=1$, that is, $v(A)=F$. Hence, $v(A \land B)=v(A)\,\tilde{\land}\,v(B)=F\,\tilde{\land}\,v(B)=F$, for every $B$. From this, $\rho_v(\cons(A \land B))=v(A \land B)_3=1$. Analogously it is proved that $\rho_v$ satisfies (vp5). For (vp7), suppose that $\rho_v(\cons(A \land B))=1$, and either $\rho_v(\neg A)=1$ or $\rho_v(\neg B)=1$. Then, $v(A \land B)_3=1$ and $v(A)_2=1$ or $v(B)_2=1$, hence $v(A \land B)_2=1$. Then, $v(A \land B)=F$ and so either $v(A)=F$ or $v(B)=F$. That is, either $v(A)_3=v(A)_2=1$ or $v(B)_3=v(B)_2=1$. Hence,  $\rho_v(\cons A) = \rho_v(\neg A)=1$ or  $\rho_v(\cons B)=\rho_v(\neg B)=1$ and so $\rho_v$ satisfies (vp7).  For (vp8), suppose that $\rho_v(\cons A)=\rho_v(A)=1$. Then, $v(A)_3=v(A)_1=1$, that is, $v(A)=T$. Then, $v(A \vee B)=T$ and so $v(A \lor B)_3=1$, for any $B$. That is,  $\rho_v(\cons(A \lor B))=1$, for any $B$, and so $\rho_v$ satisfies (vp8). Clause (vp9) is proved analogously. For (vp16), assume that $\rho_v(\cons(A \to B))=1$, and either $\rho_v(A)=0$ or $\rho_v(B)=1$. Then, $v(A \to B)_3=1$, and either $v(A)_1=0$ or $v(B)_1=1$, that is, $v(A \to B)_1=1$. This means that $v(A \to B)=T$, and so either $v(A)=F$ or $v(B)=T$. That is, either $v(A)_3 =v(A)_2=1$ or  $v(B)_3=v(B)_1=1$. This means that either $\rho_v(\cons A) =\rho_v(\neg A)=1$ or  $\rho_v(\cons B)=\rho_v(B)=1$. Hence, $\rho_v$ satisfies clause (vp16). 
The rest of the clauses are proved by similar arguments. 
This shows that $\rho_v$ is a bivaluation for \letkp\ such that, by definition,  $\rho_v(A)=1$ \ iff \ $v(A) \in \textrm{D}$, for every formula $A$.
\end{proof}

\begin{theorem}  [Soundness of \letkp\ w.r.t. the six-valued logical matrix $\mathcal{M}_6$] \label{sound-LETKP-matrix}  \ \\
For every set of formulas $\Gamma \cup \{A\} \subseteq For(\Sigma)$: 
$\Gamma \vdash_{\letkp} A$ \ implies that \  $\Gamma\models_{\mathcal{M}_6} A$.
\end{theorem}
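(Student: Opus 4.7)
The plan is to mirror the structure of the soundness proof for \letk\ with respect to its Nmatrix (Theorem~\ref{sound-Nmat-LETK}), bridging the gap between the matrix semantics and the bivaluation semantics via Proposition~\ref{lemma-sound-LETKP}. The two key ingredients are already in place: soundness and completeness of \letkp\ with respect to the bivaluation semantics (Theorem~\ref{adeq-LETKP-bival}), and the fact that every valuation $v$ over $\mathcal{M}_6$ induces a bivaluation $\rho_v$ for \letkp\ via $\rho_v(A) = v(A)_1$, with $\rho_v(A)=1$ iff $v(A) \in \textrm{D}$.

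First, I would assume $\Gamma \vdash_{\letkp} A$. Applying Theorem~\ref{adeq-LETKP-bival}, I obtain $\Gamma \models_{\letkp}^2 A$, i.e., every bivaluation for \letkp\ satisfying all of $\Gamma$ also satisfies $A$. Second, I would take an arbitrary valuation $v$ over $\mathcal{M}_6$ with $v(B) \in \textrm{D}$ for every $B \in \Gamma$, and form the associated bivaluation $\rho_v$ as in Proposition~\ref{lemma-sound-LETKP}. By that proposition, $\rho_v$ is a bivaluation for \letkp\ and satisfies $\rho_v(B)=1$ for every $B \in \Gamma$. Third, combining these two facts, $\rho_v(A) = 1$, whence $v(A) \in \textrm{D}$ by the equivalence in Proposition~\ref{lemma-sound-LETKP}. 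Since $v$ was arbitrary, this yields $\Gamma \models_{\mathcal{M}_6} A$.

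There is no substantial obstacle here, because all the difficult verifications were carried out earlier: the rule-by-rule checking against bivaluations is subsumed in Theorem~\ref{adeq-LETKP-bival}, and the compatibility between snapshots in $\textsc{B}_{\letk}$ and bivaluation clauses (vp1)--(vp17) was verified in Proposition~\ref{lemma-sound-LETKP}. The only conceptual point worth flagging is that $\mathcal{M}_6$ is a genuine (deterministic) matrix obtained from $\mathcal{M}_{\letk}$ by restricting its multioperators, so any valuation over $\mathcal{M}_6$ is automatically a valuation over $\mathcal{M}_{\letk}$ (cf.\ Remark~\ref{twist-LETKP}(i)); this justifies invoking Proposition~\ref{lemma-sound-LETKP} at all. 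The argument is therefore a three-line routing through already-established results, with the real content living in the preceding proposition.
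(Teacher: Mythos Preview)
Your proposal is correct and follows essentially the same approach as the paper's own proof: assume $\Gamma \vdash_{\letkp} A$, invoke Theorem~\ref{adeq-LETKP-bival} to obtain $\Gamma \models_{\letkp}^2 A$, then for an arbitrary valuation $v$ over $\mathcal{M}_6$ satisfying $\Gamma$ pass to the induced bivaluation $\rho_v$ from Proposition~\ref{lemma-sound-LETKP} and conclude $v(A)\in\textrm{D}$. The additional remark you make about $\mathcal{M}_6$ being a submultialgebra of $\mathcal{M}_{\letk}$ is accurate but not needed here, since Proposition~\ref{lemma-sound-LETKP} is already stated directly for $\mathcal{M}_6$.
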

\begin{proof} 
Assume that $\Gamma \vdash_{\letkp} A$. By Theorem~\ref{adeq-LETKP-bival},  $\Gamma\models_{\letkp}^2 A$. Now, let $v$ be a valuation  over the matrix $\mathcal{M}_{6}$ such that $v(B)  \in \textrm{D}$ for every $B \in \Gamma$, and let $\rho_v$ be the bivaluation for \letkp\ defined from $v$ as in Proposition~\ref{lemma-sound-LETKP}. Hence $\rho_v(B)=1$ for every $B \in \Gamma$ and so $\rho_v(A)=1$, since $\Gamma\models_{\letkp}^2 A$. From this it follows that $v(A) \in \textrm{D}$. Therefore, $\Gamma\models_{\mathcal{M}_{6}} A$.
\end{proof}

\begin{remark} \label{clause-refor}
It is worth noting that clauses (vp6), (vp7), (vp11), (vp12), (vp16) and (vp17) are equivalent, by contraposition, to the following ones:\\[1mm]
$\begin{array}{ll}
\mbox{(vp6)'} & \mbox{If $\rho(A)=\rho(B)=1$ and  either $\rho(\cons A) = 0$ or  $\rho(\cons B)=0$, then  $\rho(\cons(A \land B))=0$};\\[1mm]
\mbox{(vp7)'} & \mbox{If either $\rho(\neg A)=1$ or $\rho(\neg B)=1$; either $\rho(\cons A) = 0$ or $\rho(\neg A)=0$;}\\[1mm]
& \mbox{and either  $\rho(\cons B) = 0$ or $\rho(\neg B)=0$, then $\rho(\cons(A \land B))=0$};\\[1mm]
\mbox{(vp11)'} & \mbox{If either $\rho(A)=1$ or $\rho(B)=1$; either $\rho(\cons A) = 0$ or $\rho(A)=0$;}\\[1mm]
& \mbox{and either  $\rho(\cons B) = 0$ or $\rho(B)=0$, then $\rho(\cons(A \lor B))=0$};\\[1mm]
\mbox{(vp12)'} & \mbox{If $\rho(A)=\rho(B)=0$ and  either $\rho(\cons A) = 0$ or  $\rho(\cons B)=0$, then  $\rho(\cons(A \lor B))=0$};\\[1mm]
\mbox{(vp16)'} & \mbox{If either $\rho(A)=0$ or $\rho(B)=1$; either $\rho(\cons A) = 0$ or $\rho(\neg A)=0$;}\\[1mm]
& \mbox{and either  $\rho(\cons B) = 0$ or $\rho(B)=0$, then $\rho(\cons(A \to B))=0$};\\[1mm]
\mbox{(vp17)'} & \mbox{If $\rho(A)=\rho(\neg B)=1$ and $\rho(\cons B)=0$, then  $\rho(\cons(A \to B))=0$}.
\end{array}$\\[1mm]
This reformulation of the above-mentioned clauses will be useful for the  proof of completeness of \letkp\ w.r.t. $\mathcal{M}_6$. Moreover, in Proposition~\ref{charact-bival-LETKP} a more compact characterization of bivaluations for \letkp\ will be given.
\end{remark}

\begin{proposition} \label{lemma-comple-LETKP}
For every bivaluation $\rho$ for \letkp\  the mapping $v_{\rho}:For(\Sigma) \to \textsc{B}_{\letk}$ given by $v_\rho(A)=(\rho(A),\rho(\neg A),\rho(\circ A))$ is a valuation over the matrix $\mathcal{M}_6$ such that: $v_\rho(A) \in \textrm{D}$ \ iff \ $\rho(A)=1$, for every formula $A$.
\end{proposition}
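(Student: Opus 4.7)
The plan is to verify three things in turn: that $v_\rho$ lands in $\textsc{B}_{\letk}$, that it commutes with every connective of the deterministic matrix $\mathcal{M}_6$, and that designation is preserved. The first two coordinates of every required equation will come for free from the \letk-bivaluation clauses (v1)--(v8), so the real content lies in checking the third coordinate connective by connective, using the propagation clauses (vp1)--(vp17).

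First, I would observe that the triple $v_\rho(A)=(\rho(A),\rho(\neg A),\rho(\cons A))$ lies in $\textsc{B}_{\letk}$ for every $A$, since clause (v8) (in its Boolean-algebraic form (v8)$'$) is literally the conjunction of the two defining inequalities $z_3\leq z_1\sqcup z_2$ and $z_1\sqcap z_2\sqcap z_3=0$ of $\textsc{B}_{\letk}$. Then I would dispatch the two unary connectives: for $\tilde\neg$, clauses (v4) and (vp2) give
\[
v_\rho(\neg A)=(\rho(\neg A),\rho(A),\rho(\cons A))=\tilde\neg\, v_\rho(A);
\]
for $\tilde\cons$, clause (vp1) yields $\rho(\cons\cons A)=1$, and instantiating (v8) at $\cons A$ then forces $\rho(\neg\cons A)=\sneg\rho(\cons A)$, so that $v_\rho(\cons A)=(\rho(\cons A),\sneg\rho(\cons A),1)=\tilde\cons\, v_\rho(A)$.

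The main work is the three binary connectives, and I expect this to be the principal obstacle. For $\tilde\land$, the first two coordinates of $v_\rho(A\land B)$ match the definition of $\tilde\land$ directly via (v1) and (v5). For the third coordinate, I must show that $\rho(\cons(A\land B))$ coincides with the Boolean expression
\[
(\rho(A)\sqcap\rho(\cons A)\sqcap\rho(B)\sqcap\rho(\cons B))\sqcup(\rho(\neg A)\sqcap\rho(\cons A))\sqcup(\rho(\neg B)\sqcap\rho(\cons B))
\]
appearing in Proposition~\ref{prop.reasoning.6.val}(i). The implication from right to left is covered exactly by clauses (vp3), (vp4), (vp5). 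For the converse, I would assume $\rho(\cons(A\land B))=1$ and split on whether $\rho(A\land B)=1$ or $\rho(A\land B)=0$: in the first case clause (vp6) delivers the first disjunct; in the second case $\rho(\neg(A\land B))=1$ by (v8), so $\rho(\neg A)=1$ or $\rho(\neg B)=1$, and (vp7) delivers the second or third disjunct. The analogous verifications for $\tilde\lor$ and $\tilde\to$ follow the same template, this time appealing to (vp8)--(vp12) and (vp13)--(vp17) respectively, with the contrapositive reformulations collected in Remark~\ref{clause-refor} making the backward direction systematic.

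Finally, the equivalence $v_\rho(A)\in\textrm{D}$ iff $\rho(A)=1$ is immediate from the definition $\textrm{D}=\{z\in\textsc{B}_{\letk}:z_1=1\}$ together with $v_\rho(A)_1=\rho(A)$. The only delicate step is thus the case analysis for the third coordinate of $\tilde\land$, $\tilde\lor$, $\tilde\to$; but the propagation clauses have been designed precisely to make the forward and backward implications match up with the Boolean expressions of Proposition~\ref{prop.reasoning.6.val}, so the verification is essentially bookkeeping rather than genuine logical content.
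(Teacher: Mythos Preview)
Your proposal is correct and takes a genuinely different route from the paper's own proof. The paper proceeds by an exhaustive case analysis on the six snapshot values: for each connective $\#\in\{\land,\lor,\to\}$ it runs through all relevant combinations $(v_\rho(A),v_\rho(B))\in\{T,T_0,\bo,\nei,F_0,F\}^2$ and verifies in each case, using the bivaluation clauses, that $v_\rho(A\,\#\,B)$ equals the table entry $v_\rho(A)\,\tilde{\#}\,v_\rho(B)$. Your argument instead works directly with the Boolean expression for the third coordinate given in Proposition~\ref{prop.reasoning.6.val}, establishing the equality $\rho(\cons(A\,\#\,B))=\textrm{(Boolean term)}$ by a two-way implication driven by the propagation clauses (vp3)--(vp17) and a single split on $\rho(A\,\#\,B)\in\{0,1\}$. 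This is precisely the content that the paper packages separately, \emph{after} the present proposition, as Proposition~\ref{charact-bival-LETKP} (clauses~(v9)--(v11)); you have in effect proved that characterization inline. Your approach is shorter and more conceptual, and makes transparent \emph{why} the propagation clauses suffice; the paper's approach is more pedestrian but has the virtue of being entirely self-contained at the level of the six truth-values, without needing to manipulate Boolean terms.
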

\begin{proof} It is clear that $v_\rho(A) \in \textsc{B}_{\letk}$, hence the function is well-defined. Let us prove now that $v_\rho$ is a valuation over $\mathcal{M}_6$. Thus, let $A,B \in For(\Sigma)$ (in what follows, recall clauses (v1)-(v8) and (vp1)-(vp17) from Definitions~\ref{def-val-LETK} and~\ref{val-sem-propag}), as well as clauses (vp6)', (vp7)', (vp11)', (vp12)', (vp16)' and (vp17)' from Remark~\ref{clause-refor}).

\m\mh {\em Conjunction:} Suppose that $v_\rho(A)=v_\rho(B)=T$. Then, $\rho(\cons A)=\rho(A)=1$ and $\rho(\cons B)=\rho(B)=1$ and so, by (vp3) and (v1), $\rho(\cons(A \land B))=\rho(A \land B)=1$. Hence, $v_\rho(A\land B)=T=v_\rho(A) \,\tilde{\land}\, v_\rho(B)$. Now, suppose that $v_\rho(A)=T_0$ and $v_\rho(B) \in \{T,T_0\}$. Then, $\rho(A)=1$, $\rho(\neg A)=\rho(\cons A)=0$, $\rho(B)=1$ and $\rho(\neg B)=0$. By (v1), (v5) and (vp6)', $\rho(A \land B)=1$, $\rho(\neg(A \land B))=0$ and $\rho(\cons(A \land B))=0$. That is, $v_\rho(A\land B)=T_0=v_\rho(A) \,\tilde{\land}\, v_\rho(B)$. Analogously we prove that, if $v_\rho(B)=T_0$ and $v_\rho(A) \in \{T,T_0\}$ then $v_\rho(A\land B)=T_0=v_\rho(A) \,\tilde{\land}\, v_\rho(B)$. 
Assume now that $v_\rho(A)=\bo$ and $v_\rho(B) \in \textrm{D}$. Then, $\rho(A)=\rho(\neg A)=1$, $\rho(\cons A)=0$ and $\rho(B)=1$. By (v1), (v5) and (v8), $\rho(A \land B)=\rho(\neg(A \land B))=1$ and $\rho(\cons(A \land B))=0$. That is, $v_\rho(A\land B)=\bo=v_\rho(A) \,\tilde{\land}\, v_\rho(B)$. Analogously we prove that, if $v_\rho(B)=\bo$ and $v_\rho(A) \in \textrm{D}$ then $v_\rho(A\land B)=\bo=v_\rho(A) \,\tilde{\land}\, v_\rho(B)$. 
Now, assume that $v_\rho(A)=\nei$ and $v_\rho(B) \in \{T,T_0,\nei\}$. Then, $\rho(A)=\rho(\neg A)=\rho(\cons A)=0$ and $\rho(\neg B)=0$. Hence, by (v1) and (v5) and (v8), $\rho(A \land B)=\rho(\neg(A \land B))=\rho(\cons(A \land B))=0$. That is, $v_\rho(A\land B)=\nei=v_\rho(A) \,\tilde{\land}\, v_\rho(B)$. Analogously we prove that, if $v_\rho(B)=\nei$ and $v_\rho(A) \in \{T,T_0\}$ then $v_\rho(A\land B)=\nei=v_\rho(A) \,\tilde{\land}\, v_\rho(B)$. Suppose now that $v_\rho(A)=F_0$ and $v_\rho(B)  \neq F$. Then $\rho(A)=0$, $\rho(\neg A)=1$, $\rho(\cons A)=0$ and: either $\rho(\neg B)=0$ or $\rho(\cons B)=0$. Hence, by (v1), (v5) and (vp7)', $\rho(A \land B)=0$, $\rho(\neg(A \land B))=1$  and $\rho(\cons(A \land B))=0$. That is,  $v_\rho(A\land B)=F_0=v_\rho(A) \,\tilde{\land}\, v_\rho(B)$. Analogously we prove that, if $v_\rho(B)=F_0$ and $v_\rho(A) \neq F$ then $v_\rho(A\land B)=F_0=v_\rho(A) \,\tilde{\land}\, v_\rho(B)$.  Assume now that $v_\rho(A)=\bo$ and $v_\rho(B)=\nei$. Then $\rho(A)=\rho(\neg A)=1$, $\rho(\cons A)=0$ and $\rho(B)=\rho(\neg B)=\rho(\cons B)=0$. Thus, by (v1), (v5) and (vp7)', $\rho(A \land B)=0$, $\rho(\neg(A \land B))=1$  and $\rho(\cons(A \land B))=0$. That is,  $v_\rho(A\land B)=F_0=v_\rho(A) \,\tilde{\land}\, v_\rho(B)$. Analogously we prove that, if $v_\rho(B)=\nei$ and $v_\rho(A)=\bo$ then $v_\rho(A\land B)=F_0=v_\rho(A) \,\tilde{\land}\, v_\rho(B)$. Finally, suppose that either $v_\rho(A)=F$ or $v_\rho(B)=F$. Then, either  $\rho(\cons A)=\rho(\neg A)=1$ or $\rho(\cons B)=\rho(\neg B)=1$ and so, by (vp4), (vp5) and (v5), $\rho(\cons(A \land B))=\rho(\neg(A \land B))=1$. Therefore $v_\rho(A\land B)=F=v_\rho(A) \,\tilde{\land}\, v_\rho(B)$.  

\m\mh{\em Disjunction:} Suppose that  either $v_\rho(A)=T$ or $v_\rho(B)=T$. Then, either  $\rho(\cons A)=\rho(A)=1$ or $\rho(\cons B)=\rho(B)=1$ and so, by (vp8), (vp9) and (v2), $\rho(\cons(A \lor B))=\rho(A \lor B)=1$. Therefore $v_\rho(A\lor B)=T=v_\rho(A) \,\tilde{\lor}\, v_\rho(B)$. Suppose now that $v_\rho(A)=T_0$ and $v_\rho(B)  \neq T$. Then $\rho(A)=1$, $\rho(\neg A)=0$, $\rho(\cons A)=0$ and: either $\rho(B)=0$ or  $\rho(\cons B)=0$. By (v2),(v6) and  (vp11)' it follows that  $\rho(A \lor B)=1$, $\rho(\neg(A \lor B))=0$ and $\rho(\cons(A \lor B))=0$. That is,  $v_\rho(A\lor B)=T_0=v_\rho(A) \,\tilde{\lor}\, v_\rho(B)$. Analogously we prove that, if $v_\rho(B)=T_0$ and $v_\rho(A) \neq T$ then $v_\rho(A\lor B)=T_0=v_\rho(A) \,\tilde{\lor}\, v_\rho(B)$. Now, assume that $v_\rho(A)=\bo$ and $v_\rho(B) \in \{\bo,F_0,F\}$. Then, $\rho(A)=\rho(\neg A)=1$, $\rho(\cons A)=0$ and $\rho(\neg B)=1$. Hence, by (v2), (v6) and (v8), $\rho(A \lor B)=\rho(\neg(A \lor B))=1$ and $\rho(\cons(A \lor B))=0$. That is, $v_\rho(A\lor B)=\bo=v_\rho(A) \,\tilde{\lor}\, v_\rho(B)$. Analogously we prove that, if $v_\rho(B)=\bo$ and $v_\rho(A) \in \{F,F_0\}$ then $v_\rho(A\lor B)=\bo=v_\rho(A) \,\tilde{\lor}\, v_\rho(B)$. Suppose now that $v_\rho(A)=\bo$ and $v_\rho(B)=\nei$. Then $\rho(A)=\rho(\neg A)=1$, $\rho(\cons A)=0$ and $\rho(B)=\rho(\neg B)=\rho(\cons B)=0$. Then, by (v2), (v6) and (vp11)', $\rho(A \lor B)=1$, $\rho(\neg(A \lor B))=0$  and $\rho(\cons(A \lor B))=0$. That is,  $v_\rho(A\lor B)=T_0=v_\rho(A) \,\tilde{\lor}\, v_\rho(B)$. Analogously we prove that, if $v_\rho(B)=\nei$ and $v_\rho(A)=\bo$ then $v_\rho(A\lor B)=T_0=v_\rho(A) \,\tilde{\lor}\, v_\rho(B)$. 
Assume now that $v_\rho(A)=\nei$ and $v_\rho(B) \in \textrm{ND}$. Then, $\rho(A)=\rho(\neg A)=\rho(\cons A)=0$ and $\rho(B)=0$. By (v2), (v6) and (v8), $\rho(A \lor B)=\rho(\neg(A \lor B))=\rho(\cons(A \lor B))=0$. That is, $v_\rho(A\lor B)=\nei=v_\rho(A) \,\tilde{\lor}\, v_\rho(B)$. Analogously we prove that, if $v_\rho(B)=\nei$ and $v_\rho(A) \in \textrm{ND}$ then $v_\rho(A\lor B)=\nei=v_\rho(A) \,\tilde{\lor}\, v_\rho(B)$. Now, suppose that  $v_\rho(A)=F_0$ and $v_\rho(B) \in \{F,F_0\}$. Then, $\rho(A)=0$, $\rho(\neg A)=1$, $\rho(\cons A)=0$, $\rho(B)=0$ and $\rho(\neg B)=1$. By (v2), (v6) and (vp12)', $\rho(A \lor B)=1$, $\rho(\neg(A \lor B))=0$ and $\rho(\cons(A \lor B))=0$. That is, $v_\rho(A\lor B)=F_0=v_\rho(A) \,\tilde{\lor}\, v_\rho(B)$. Analogously we prove that, if $v_\rho(B)=F_0$ and $v_\rho(A) \in \{F,F_0\}$ then $v_\rho(A\lor B)=F_0=v_\rho(A) \,\tilde{\lor}\, v_\rho(B)$. Finally, suppose that $v_\rho(A)=v_\rho(B)=F$. Then, $\rho(A)=0$, $\rho(\cons A)=\rho(\neg A)=1$, $\rho(B)=0$ and $\rho(\cons B)=\rho(\neg B)=1$ and so, by (vp10) and (v2), $\rho(\cons(A \lor B))=1$ and $\rho(A \lor B)=1$. Hence, $v_\rho(A\lor B)=F=v_\rho(A) \,\tilde{\lor}\, v_\rho(B)$. 

\m\mh{\em Implication:} Suppose that $v_\rho(A)=F$. Then, $\rho(A)=0$, $\rho(\neg A)=1$ and $\rho(\cons A)=1$. Using (v3), (v7) and (vp13) it follows that, for any $B$, $\rho(A \to B)=1$, $\rho(\neg(A \to B))=0$ and $\rho(\cons(A \to B))=1$. That is, $v_\rho(A \to B)=T=v_\rho(A) \,\tilde{\to}\, v_\rho(B)$. Analogously (but now by using (vp14)) it is proven that, if  $v_\rho(B)=T$, $v_\rho(A \to B)=T=v_\rho(A) \,\tilde{\to}\, v_\rho(B)$ for any $A$.  Suppose now that $v_\rho(A)=F_0$ and $v_\rho(B)  \neq T$. Then $\rho(A)=0$, $\rho(\neg A)=1$, $\rho(\cons A)=0$ and: either $\rho(B)=0$ or  $\rho(\cons B)=0$. By (v3),(v7) and  (vp16)' it follows that  $\rho(A \to B)=1$, $\rho(\neg(A \to B))=0$ and $\rho(\cons(A \to B))=0$. That is,  $v_\rho(A\to B)=T_0=v_\rho(A) \,\tilde{\to}\, v_\rho(B)$. Analogously we prove that, if $v_\rho(B)=T_0$ and $v_\rho(A) \neq F$ then $v_\rho(A\to B)=T_0=v_\rho(A) \,\tilde{\to}\, v_\rho(B)$.
Assume now that $v_\rho(B)=\bo$ and $v_\rho(A) \in \textrm{D}$. Then, $\rho(B)=\rho(\neg B)=1$, $\rho(\cons B)=0$ and $\rho(A)=1$. By (v3), (v7) and (v8), $\rho(A \to B)=\rho(\neg(A \to B))=1$ and $\rho(\cons(A \to B))=0$. That is, $v_\rho(A\to B)=\bo=v_\rho(A) \,\tilde{\to}\, v_\rho(B)$. Analogously we prove that, if $v_\rho(B)=\nei$ and $v_\rho(A) \in \textrm{D}$ then $v_\rho(A\to B)=\nei=v_\rho(A) \,\tilde{\to}\, v_\rho(B)$. 
Now, assume that $v_\rho(B)=F_0$ and $v_\rho(A) \in \textrm{D}$. Then, $\rho(B)=0$, $\rho(\neg B)=1$, $\rho(\cons B)=0$ and $\rho(A)=1$. By (v3), (v7) and (vp17)', $\rho(A \to B)=0$, $\rho(\neg(A \to B))=1$ and $\rho(\cons(A \to B))=0$. That is, $v_\rho(A\to B)=F_0=v_\rho(A) \,\tilde{\to}\, v_\rho(B)$. Suppose now that $v_\rho(B)=F$ and $v_\rho(A) \in \textrm{D}$.  Then, $\rho(B)=0$, $\rho(\neg B)=1$, $\rho(\cons B)=1$ and $\rho(A)=1$. By (v3), (v7) and (vp15), $\rho(A \to B)=0$, $\rho(\neg(A \to B))=1$ and $\rho(\cons(A \to B))=1$. That is, $v_\rho(A\to B)=F=v_\rho(A) \,\tilde{\to}\, v_\rho(B)$.
Finally, suppose that $v_\rho(A)=\nei$ and $v_\rho(B) \in \{\bo,\nei,F_0,F\}$. Then, $\rho(A)=\rho(\neg A)=\rho(\cons A)=0$ and: either $\rho(\cons B)=0$ or $\rho(B)=0$. By (v3), (v7) and (vp16)', $\rho(A \to B)=0$, $\rho(\neg(A \to B))=1$ and $\rho(\cons(A \to B))=0$. That is, $v_\rho(A\to B)=T_0=v_\rho(A) \,\tilde{\to}\, v_\rho(B)$.

\m\mh{\em Negation:}
Let $v_\rho(A)=(\rho(A),\rho(\neg A),\rho(\cons A))$. Then, 
by (v4) and (vp2): 
	$$\begin{array}{lll}
	v_\rho(\neg A) &=&  (\rho(\neg A),\rho(\neg\neg A),\rho(\cons\neg A))\\
	&=& (\rho(\neg A),\rho(A),\rho(\cons A))= \tilde{\neg}\, v_\rho(A).
	\end{array}$$

\m\mh{\em Classicality:}
Let $v_\rho(A)=(\rho(A),\rho(\neg A),\rho(\cons A))$. 
Then, by (vp1):
	$$\begin{array}{lll}
	v_\rho(\cons A) &=&  (\rho(\cons A),\rho(\neg \cons A),\rho(\cons\cons A))\\
	&=& (\rho(\cons A),\sneg \rho(\cons A),1)= \tilde{\cons}\, v_\rho(A).
	\end{array}$$ \ 
	
\m \mh This shows that $v_{\rho}$ is a valuation over the matrix $\mathcal{M}_{6}$ such that, 
for every formula $A$, $v_\rho(A) \in \textrm{D}$ \ iff \ $\rho(A)=1$.
\end{proof}

\mm

\begin{theorem}  [Completeness of \letkp\ w.r.t. the six-valued logical matrix $\mathcal{M}_6$] \label{comple-LETKP-matrix}  \ \\
For every set of formulas $\Gamma \cup \{A\} \subseteq For(\Sigma)$: 
$\Gamma\models_{\mathcal{M}_6} A$ \ implies that \ $\Gamma \vdash_{\letkp} A$.
\begin{proof} 
Assume that $\Gamma\models_{\mathcal{M}_{6}} A$, and let $\rho$ be a bivaluation for \letkp\ such that $\rho(B)=1$ for every $B \in \Gamma$. Let $v_{\rho}$ be defined as in Proposition~\ref{lemma-comple-LETKP}. Then, $v_{\rho}$ is a valuation over $\mathcal{M}_{6}$ such that $v_{\rho}(B) \in \textrm{D}$, for every $B \in \Gamma$. By hypothesis, $v_{\rho}(A) \in \textrm{D}$, whence $\rho(A)=1$. This shows that  $\Gamma\models_{\letkp}^2 A$. By Theorem~\ref{adeq-LETKP-bival}, $\Gamma \vdash_{\letkp} A$.
\end{proof}\end{theorem}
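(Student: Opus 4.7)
The plan is to reduce consequence in the six-valued matrix $\mathcal{M}_6$ to the bivaluation consequence $\models_{\letkp}^2$, and then invoke the already established soundness/completeness Theorem~\ref{adeq-LETKP-bival} to obtain derivability in \letkp. This is the same two-step scheme used for \letk\ in Theorem~\ref{comp-Nmat-LETK}, now applied to the deterministic six-valued matrix rather than to the Nmatrix.

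Concretely, I would assume $\Gamma \models_{\mathcal{M}_6} A$ and pick an arbitrary bivaluation $\rho$ for \letkp\ satisfying $\rho(B) = 1$ for every $B \in \Gamma$. Define $v_\rho \colon For(\Sigma) \to \textsc{B}_{\letk}$ by $v_\rho(C) = (\rho(C), \rho(\neg C), \rho(\cons C))$, exactly as in Proposition~\ref{lemma-comple-LETKP}. That proposition simultaneously guarantees that $v_\rho$ is a legal valuation over the logical matrix $\mathcal{M}_6$, and that $v_\rho(C) \in \textrm{D}$ iff $\rho(C) = 1$ for every $C$. Combining the second clause with the hypothesis on $\rho$ gives $v_\rho(B) \in \textrm{D}$ for all $B \in \Gamma$, so the assumption $\Gamma \models_{\mathcal{M}_6} A$ yields $v_\rho(A) \in \textrm{D}$, i.e.\ $\rho(A) = 1$. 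Since $\rho$ was arbitrary, this shows $\Gamma \models_{\letkp}^2 A$, and Theorem~\ref{adeq-LETKP-bival} finally delivers $\Gamma \vdash_{\letkp} A$.

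The entire content of the argument therefore lives inside Proposition~\ref{lemma-comple-LETKP}: the only nontrivial step is checking, connective by connective and case by case, that $v_\rho(\#(C_1, \ldots, C_n))$ coincides with $\tilde{\#}(v_\rho(C_1), \ldots, v_\rho(C_n))$ for $\# \in \{\land, \lor, \to, \neg, \cons\}$. That is where the clauses (vp1)--(vp17) of Definition~\ref{val-sem-propag}, together with their contrapositive reformulations in Remark~\ref{clause-refor}, are designed to carve out exactly the deterministic entries of the tables in Definition~\ref{def.six.val.LETKP}. Granted that proposition, the completeness theorem itself is immediate; the main obstacle, already absorbed upstream, is ensuring that Definition~\ref{val-sem-propag} contains enough clauses (and in the right directionality, via Remark~\ref{clause-refor}) for the case analysis in Proposition~\ref{lemma-comple-LETKP} to close on every snapshot in $\textsc{B}_{\letk}$.
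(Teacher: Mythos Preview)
Your proposal is correct and follows essentially the same route as the paper's own proof: reduce $\models_{\mathcal{M}_6}$ to $\models_{\letkp}^2$ via the map $v_\rho(C)=(\rho(C),\rho(\neg C),\rho(\cons C))$ of Proposition~\ref{lemma-comple-LETKP}, then apply Theorem~\ref{adeq-LETKP-bival}. Your additional remarks correctly identify that all the substantive work is concentrated in Proposition~\ref{lemma-comple-LETKP} and that the completeness theorem itself is just bookkeeping once that proposition is in hand.
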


\noindent As announced before, the definition of bivaluations for \letkp\ can be drastically simplified in terms of Boolean operators:
 
\begin{proposition} \label{charact-bival-LETKP} Let $\rho$ be a bivaluation for \letk. Then, $\rho$ is a bivaluation for \letkp\ iff it satisfies, in addition,  (vp1) and (vp2) (from Definition~\ref{val-sem-propag}) plus the following properties, expressed in the language of Boolean algebras:\\[1mm]
$\begin{array}{ll}
\mbox{(v9)} & \rho(\cons(A \land B))= a \sqcup b \sqcup c, \mbox{ where}\\[1mm]
& a=\rho(A) \sqcap \rho(\cons A) \sqcap \rho(B) \sqcap \rho(\cons B), \  b=\rho(\neg A) \sqcap \rho(\cons A), \ c=\rho(\neg B) \sqcap \rho(\cons B);\\[1mm]
\mbox{(v10)} &  \rho(\cons(A \lor B))= a' \sqcup b' \sqcup c', \mbox{ where}\\[1mm]
& a'=\rho(\neg A) \sqcap \rho(\cons A) \sqcap \rho(\neg B) \sqcap \rho(\cons B), \  b'=\rho(A) \sqcap \rho(\cons A), \ c'=\rho(B) \sqcap \rho(\cons B);\\[1mm]
\mbox{(v11)} &  \rho(\cons(A \to B))= a'' \sqcup b'' \sqcup c'', \mbox{ where}\\[1mm]
& a''=\rho(A) \sqcap \rho(\neg B) \sqcap \rho(\cons B), \ b''=\rho(\neg A) \sqcap \rho(\cons A), \  c''=\rho(B) \sqcap \rho(\cons B).
\end{array}$
\begin{proof} It follows by a tedious but straightforward verification.
\end{proof}\end{proposition}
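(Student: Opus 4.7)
The plan is to establish the two directions of the biconditional by systematically converting between the conditional clauses (vp3)--(vp17) and the Boolean equations (v9)--(v11). The essential observation is that each equation is a compact encoding of a block of conditional clauses: (v9) collects (vp3)--(vp7) for $\land$, (v10) collects (vp8)--(vp12) for $\lor$, and (v11) collects (vp13)--(vp17) for $\to$. I will only sketch the argument for (v9); the cases of (v10) and (v11) are proved by the same pattern with the analogous clauses.

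For the forward direction, assume $\rho$ is a bivaluation for \letkp. Clauses (vp1) and (vp2) are already listed among the hypotheses. To establish the inequality $\rho(\cons(A \land B)) \geq a \sqcup b \sqcup c$, I will handle each of the three disjuncts: if $a=1$ then (vp3) gives $\rho(\cons(A \land B))=1$; if $b=1$ then (vp4) gives it; if $c=1$ then (vp5) does. For the reverse inequality, I will argue contrapositively and show that if $a=b=c=0$ then $\rho(\cons(A \land B))=0$. Split into cases according to the values of $\rho(A)$ and $\rho(B)$: when $\rho(A)=\rho(B)=1$, the condition $a=0$ forces $\rho(\cons A)=0$ or $\rho(\cons B)=0$, and (vp6)$'$ applies; otherwise $\rho(A \land B)=0$ by (v1), and either $\rho(\neg(A \land B))=0$ (so $\rho(\cons(A \land B))=0$ by (v8)), or $\rho(\neg(A \land B))=1$ which together with $b=c=0$ lets (vp7)$'$ conclude $\rho(\cons(A \land B))=0$.

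For the converse direction, assume $\rho$ is a bivaluation for \letk\ satisfying (vp1), (vp2), (v9), (v10), (v11). Clauses (vp3), (vp4), (vp5) are immediate from (v9): the respective hypotheses make $a=1$, $b=1$, $c=1$ respectively, hence $\rho(\cons(A \land B)) = a \sqcup b \sqcup c = 1$. For (vp6), the key move is using (v8) to eliminate alternatives: under $\rho(A)=\rho(B)=\rho(\cons(A \land B))=1$, the equation $a \sqcup b \sqcup c = 1$ must be realized by $a$, because $b=1$ would imply $\rho(A) \sqcap \rho(\neg A) \sqcap \rho(\cons A) = 1$ violating (v8), and similarly $c=1$ is excluded; from $a=1$ the conclusion follows. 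For (vp7), under $\rho(\cons(A \land B))=1$ with (say) $\rho(\neg A)=1$, clause (v8) now rules out $a$, forcing $b=1$ or $c=1$, which are exactly the two conclusions of (vp7).

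The main obstacle is purely bookkeeping: one must verify that every one of the fifteen conditional clauses (vp3)--(vp17) is correctly encoded by, and recoverable from, the three equations, in each direction. The single technical device that makes the case analyses go through is repeated use of clause (v8) from Definition~\ref{def-val-LETK}, in the equational form $\rho(A) \sqcap \rho(\neg A) \sqcap \rho(\cons A) = 0$, which is precisely what excludes the ``unwanted'' summands among $a,b,c$ (and their analogues in (v10) and (v11)) under each particular hypothesis. Once this pattern is internalized, the proofs of (v10) versus (vp8)--(vp12) and (v11) versus (vp13)--(vp17) are direct translations of the argument for (v9).
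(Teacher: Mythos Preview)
Your proposal is correct and is precisely the ``tedious but straightforward verification'' that the paper omits: you split each Boolean equation into the inequality $\geq$ (handled by the introduction clauses (vp3)--(vp5), (vp8)--(vp10), (vp13)--(vp15)) and the inequality $\leq$ (handled contrapositively via the primed elimination clauses (vp6)$'$, (vp7)$'$, etc.\ together with (v8)), and in the converse direction you recover each conditional clause from the equation using (v8) to exclude spurious disjuncts. This is the intended argument; there is nothing to add.
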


\subsection{\letkp\ is Blok-Pigozzi algebraizable} \label{sect-BPalg}

In this section we show that \letkp\ has enough expressive power  to be algebraizable in the general sense proposed by 
 Blok and  Pigozzi in \cite{blok:pig:89} (see also  \cite{Font:book}). 

First, note that  in \letkp\  a bi-implication $A \toot B $ defined as $ (A \to B) \land (B \to A)$ does not preserve logical equivalence through the connectives: for instance, $T_0 \toot \bo$ gets the designated value $\bo$, but $\neg T_0 \toot \neg \bo= F_0 \toot \bo=F_0$, which is non-designated. In \nel\ and $\nel^\bot$ an `equivalence' operator which preserves logical  equivalence through the connectives (that is, defines a logical congruence) is defined as follows: $A \Leftrightarrow B \defi (A \toot B) \land (\neg A \toot \neg B)$. In terms of its (two-dimensional) twist structures, this means that two pairs are equal when the respective coordinates coincide. But in \letkp\ we are dealing with three-dimensional twist structures, that is, the snapshots are triples instead of pairs. Since we have twist operators to `read' each coordinate of the snapshots ($\tilde{\neg}$ `reads' the second coordinate, while $\tilde{\cons}$ `reads' the third one) an appropriate  notion of  `equivalence' (representing identity between triples) in \letkp\ should be the following:
$$A \equiv B \defi (A \toot B) \land (\neg A \toot \neg B) \land (\cons A \toot \cons B).$$
The  table of the interpretation $\tilde{\equiv}$ of this  connective in $\mathcal{M}_6$ is as follows: 

\begin{center}
\begin{tabular}{|c|c|c|c|c|c|c|}
\hline
 $\tilde{\equiv}$ & $T$  & $T_0$  & $\bo$ & $\nei$ & $F_0$  & $F$ \\[1mm]
 \hline \hline
    $T$    & $T$  & $F$ & $F$ & $F$ & $F$ & $F$   \\[1mm] \hline
     $T_0$    & $F$  & $T_0$ & $F_0$ & $\nei$ & $F_0$ & $F$  \\[1mm] \hline
     $\bo$    & $F$  & $F_0$ & $\bo$ & $\nei$ & $F_0$ & $F$  \\[1mm] \hline
     $\nei$    & $F$  & $\nei$ & $\nei$ & $T_0$ & $\nei$ & $F$  \\[1mm] \hline
     $F_0$    & $F$  & $F_0$ & $F_0$ & $\nei$ & $T_0$ & $F$  \\[1mm] \hline
     $F$    & $F$  & $F$ & $F$ & $F$ & $F$ & $T$  \\[1mm] \hline
\end{tabular}
\end{center}

\

\noindent
From the  table above, it is immediate to prove the following relevant properties of $\equiv$:

\begin{proposition} \label{prop-equivBP}
The following properties hold in  $\mathcal{M}_6$:\\[1mm]
(1) \ For every $z,w \in \textsc{B}_{\letk}$, $(z \,\tilde{\equiv}\, w) \in \textrm{D}$ \ iff \ $z=w$.\\[1mm]
(2) \ For every formulas $A$ and $B$, and for every valuation $v$ over $\mathcal{M}_6$: 
$v(A \equiv B)  \in \textrm{D} \ \mbox{ iff } \ v(A)=v(B).$ \\[1mm] 
(3) \ $\models_{\mathcal{M}_6} (A \equiv A)$ \ for every formula $A$.\\[1mm]
(4) \ $(A \equiv B) \models_{\mathcal{M}_6} (B \equiv A)$ for every formulas $A$ and $B$. \\[1mm]
(5) \ $(A \equiv B), (B \equiv C) \models_{\mathcal{M}_6} (A \equiv C)$ for every formulas $A$, $B$ and $C$.\\[1mm]
(6) \ $(A \equiv B) \models_{\mathcal{M}_6} (\#\, A \equiv \#\, B)$ for every formulas $A$ and $B$ and $\# \in \{\neg,\cons\}$. \\[1mm]
(7) \ $(A \equiv B), (C \equiv D) \models_{\mathcal{M}_6} (A\,\#\,C \equiv B\,\#\,D)$ for every formulas $A$, $B$, $C$ and $D$ and $\# \in \{\land,\lor,\to\}$.\\[1mm]
(8) \ $(A \equiv (A \to A))  \models_{\mathcal{M}_6} A$, and $A  \models_{\mathcal{M}_6} (A \equiv (A \to A))$  \ for every formula $A$.
\begin{proof}
Item (1) is immediate from the truth-table for $\tilde{\equiv}$ displayed above. An analytical proof can be done as follows: For every $z,w \in \textsc{B}_{\letk}$ let  $z \,\tilde{\leftrightarrow}\, w \defin (z \,\tilde{\to}\, w) \,\tilde{\land}\, (w \,\tilde{\to}\, z)$. Then, $(z \,\tilde{\leftrightarrow}\, w)_1=(z_1 \Rightarrow w_1) \sqcap (w_1 \Rightarrow z_1)$ and so $z \,\tilde{\leftrightarrow}\, w \in \textrm{D}$ iff $z_1=w_1$.  From this, 
$\tilde{\neg}\,z \,\tilde{\leftrightarrow}\, \tilde{\neg}\,w \in \textrm{D}$ iff $z_2=w_2$, and $\tilde{\cons}\,z \,\tilde{\leftrightarrow}\, \tilde{\cons}\,w \in \textrm{D}$ iff $z_3=w_3$. This means that $(z \,\tilde{\equiv}\, w) \in \textrm{D}$ \ iff $z_i=w_i$ for $i=1,2,3$, iff $z=w$. Item~(2) follows from~(1) and from the fact that $v(A \equiv B)= (v(A) \,\tilde{\equiv}\, v(B))$ for every formulas $A$ and $B$, and every valuation $v$  over $\mathcal{M}_6$. Items (3)-(7) are immediate from~(2), taking into account (for items~(6) and~(7)) that the operations in $\mathcal{M}_6$ are functional, that is, deterministic. For (8), let $v$ be a valuation over  $\mathcal{M}_6$ such that $v(A \equiv (A \to A)) \in \textrm{D}$. By~(2), $v(A)=v(A \to A)=v(A) \,\tilde{\to}\, v(A)$. But $z \,\tilde{\to}\, z\in \textrm{D}$ for every $z \in \textsc{B}_{\letk}$. Hence, $v(A) \in \textrm{D}$, showing that $(A \equiv (A \to A))  \models_{\mathcal{M}_6} A$. Conversely, let $v$  be a valuation over  $\mathcal{M}_6$ such that $v(A) \in \textrm{D}$. Hence, $v(A \to A)=v(A) \,\tilde{\to}\, v(A)=v(A)$, by definition of $\tilde{\to}$ (since  $z \,\tilde{\to}\, z=z$ for every $z\in \textrm{D}$). By~(2), $v(A \equiv (A \to A)) \in \textrm{D}$. This shows that $A  \models_{\mathcal{M}_6} (A \equiv (A \to A))$.
\end{proof}\end{proposition}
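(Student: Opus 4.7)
The plan is to establish item (1) first, since every other item follows from it by a short, uniform argument that relies on the determinism of the operations of $\mathcal{M}_6$. For (1), I would unpack $\equiv$ one ``coordinate-reader'' at a time. Writing $z \,\tilde{\leftrightarrow}\, w$ for $(z \,\tilde{\to}\, w) \,\tilde{\land}\, (w \,\tilde{\to}\, z)$, the definition of $\tilde{\to}$ and $\tilde{\land}$ in $\mathcal{M}_6$ gives that the first coordinate of $z \,\tilde{\leftrightarrow}\, w$ is $(z_1 \Rightarrow w_1)\sqcap(w_1 \Rightarrow z_1)$, which equals $1$ iff $z_1=w_1$. Since designation in $\mathcal{M}_6$ depends only on the first coordinate and the first coordinate of a conjunction is the meet of first coordinates, the three-fold conjunction $z \,\tilde{\equiv}\, w \defi (z \,\tilde{\leftrightarrow}\, w) \,\tilde{\land}\, (\tilde{\neg}z \,\tilde{\leftrightarrow}\, \tilde{\neg}w) \,\tilde{\land}\, (\tilde{\cons}z \,\tilde{\leftrightarrow}\, \tilde{\cons}w)$ is designated iff all three bi-implications are. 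Using $(\tilde{\neg}z)_1=z_2$ and $(\tilde{\cons}z)_1=z_3$, this reads out as $z_1=w_1$, $z_2=w_2$, and $z_3=w_3$, i.e.~$z=w$.

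Item (2) is immediate from (1) once one notices that every operator in $\mathcal{M}_6$ is deterministic, so $v(A \equiv B) = v(A) \,\tilde{\equiv}\, v(B)$ for any valuation $v$. Items (3)--(5) are then the reflexivity, symmetry, and transitivity of equality pulled back through (2). For (6) and (7), determinism again ensures $v(\#\,A)=\tilde{\#}\,v(A)$ and $v(A\,\#\,C)=v(A)\,\tilde{\#}\,v(C)$, so $v(A)=v(B)$ (and $v(C)=v(D)$ when relevant) forces equality of the images, hence designation of the corresponding $\equiv$-formula by (2).

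For (8), I would inspect the diagonal of the table of $\tilde{\to}$, which shows $z \,\tilde{\to}\, z$ is always designated and, more precisely, $z \,\tilde{\to}\, z = z$ exactly when $z \in \textrm{D}=\{T,T_0,\bo\}$, while for $z \in \textrm{ND}$ one gets $z \,\tilde{\to}\, z \neq z$. Combined with (2), the equivalence $v(A \equiv (A \to A)) \in \textrm{D}$ $\Leftrightarrow$ $v(A)=v(A \to A)$ $\Leftrightarrow$ $v(A)\in\textrm{D}$ delivers both directions of (8).

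The only place where genuine care is needed is the bookkeeping in (1): one must track which coordinate of a snapshot each of the three ``readers'' $\tilde{\leftrightarrow}$, $\tilde{\neg}$, and $\tilde{\cons}$ exposes in the first coordinate of the resulting snapshot. Once that is set up cleanly, everything else is a routine consequence of (1), (2), and the determinism of $\mathcal{M}_6$, with no need to appeal to Proposition~\ref{charact-bival-LETKP} or to the completeness theorem.
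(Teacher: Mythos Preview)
Your proposal is correct and follows essentially the same route as the paper's own proof: the coordinate-by-coordinate analysis of $\tilde{\leftrightarrow}$ for item~(1), the reduction of (2)--(7) to (1) plus determinism, and the inspection of the diagonal of $\tilde{\to}$ for item~(8) are exactly what the paper does. Your treatment of (8) as a single biconditional chain is marginally tidier than the paper's two-direction argument, but the content is identical.
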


\begin{theorem} 
The logic \letkp\ is algebraizable in the sense of Blok-Pigozzi.
\begin{proof}
Let $p_1$ and $p_2$ be two different propositional variables, and consider the sets 
$$\Delta(p_1,p_2)=\{(p_1 \equiv p_2)\} \ \mbox{  and } \ E(p_1)=\{\langle p_1, (p_1 \to p_1)\rangle\}.$$
By items~(3)-(8) of Proposition~\ref{prop-equivBP}, the sets $\Delta(p_1,p_2)$ and $E(p_1)$ show that the logic \letkp, presented by means of $\mathcal{M}_6$, is algebraizable in the sense of Blok-Pigozzi. Indeed, conditions (3)-(8) of Proposition~\ref{prop-equivBP} are exactly the requirements for $\Delta(p_1,p_2)$ and $E(p_1)$ stated in Theorem~4.7 of~\cite{blok:pig:89} for a given logic being algebraizable.\footnote{Observe that conditions~(6) and~(7) of Proposition~\ref{prop-equivBP} depend on the signature of the given logic. In the terminology of~\cite{blok:pig:89}, it can be said that $\Delta(p_1,p_2)$ is a system of equivalence formulas, while $E(p_1)$ (written as $p_1 \approx (p_1 \to p_1)$) is a system of defining equations for the deductive system generated by $\mathcal{M}_6$, that is, \letkp.}
\end{proof}\end{theorem}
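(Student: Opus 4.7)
The plan is to invoke Theorem 4.7 of Blok--Pigozzi \cite{blok:pig:89} by exhibiting the required system of equivalence formulas and defining equations. I would take
$$\Delta(p_1,p_2) = \{\,p_1 \equiv p_2\,\} \quad \text{and} \quad E(p_1) = \{\,\langle p_1,\, p_1 \to p_1\rangle\,\},$$
where $A \equiv B$ is the abbreviation $(A \toot B) \land (\neg A \toot \neg B) \land (\cons A \toot \cons B)$. With these choices, Blok--Pigozzi algebraizability of \letkp\ reduces to verifying that $\Delta$ behaves as a congruence modulo $\vdash_{\letkp}$ and that $E$ links derivability with the corresponding equation.

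Concretely, I would verify the seven Blok--Pigozzi conditions in turn: (a) $\vdash_{\letkp} A \equiv A$; (b) $A \equiv B \vdash_{\letkp} B \equiv A$; (c) $A \equiv B,\, B \equiv C \vdash_{\letkp} A \equiv C$; (d) $A \equiv B \vdash_{\letkp} \#\,A \equiv \#\,B$ for $\# \in \{\neg,\cons\}$; (e) $A \equiv B,\, C \equiv D \vdash_{\letkp} (A \,\#\, C) \equiv (B \,\#\, D)$ for $\# \in \{\land,\lor,\to\}$; and (f) $A \vdash_{\letkp} (A \equiv (A \to A))$ together with $(A \equiv (A \to A)) \vdash_{\letkp} A$. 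These are exactly items (3)--(8) of Proposition \ref{prop-equivBP}, stated semantically in terms of $\models_{\mathcal{M}_6}$. Since \letkp\ is sound and complete with respect to $\mathcal{M}_6$ by Theorems \ref{sound-LETKP-matrix} and \ref{comple-LETKP-matrix}, each semantic clause translates at once into the required syntactic derivability.

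The key observation that makes everything run is item (1) of Proposition \ref{prop-equivBP}: $z \,\tilde{\equiv}\, w$ is designated iff $z = w$ as snapshots. This holds because $\tilde{\neg}$ swaps the first two coordinates while $\tilde{\cons}$ projects onto the third, so the conjunction of the three $\toot$-clauses forces coordinate-wise agreement. Determinism of $\mathcal{M}_6$ then makes the congruence clauses (d)--(e) follow immediately; by contrast, the analogous connective $\equiv$ in \letk\ would fail this property because of the non-determinism of $\tilde{\cons}$, so no genuine algebraizability is available in the non-propagating system. Consequently I do not expect a real obstacle: the whole argument is a short bookkeeping step that assembles items (3)--(8) of Proposition \ref{prop-equivBP}, transferred via soundness and completeness, and then feeds them into the Blok--Pigozzi criterion.
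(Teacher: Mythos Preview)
Your proposal is correct and follows essentially the same route as the paper: you choose the same $\Delta(p_1,p_2)=\{p_1 \equiv p_2\}$ and $E(p_1)=\{\langle p_1, p_1 \to p_1\rangle\}$, and then feed items (3)--(8) of Proposition~\ref{prop-equivBP} into Blok--Pigozzi's Theorem~4.7. The only difference is cosmetic: you make explicit the transfer from $\models_{\mathcal{M}_6}$ to $\vdash_{\letkp}$ via Theorems~\ref{sound-LETKP-matrix} and~\ref{comple-LETKP-matrix}, whereas the paper leaves this implicit by speaking of ``\letkp, presented by means of $\mathcal{M}_6$''.
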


\mh This result, combined with the family of twist models for \letkp\ and  the relationship of \letkp\ with involutive Stone algebras, to be studied in the following two sections, opens interesting possibilities for future research of \letkp\ from the perspective of abstract algebraic logic.

\section{Twist models for \letkp\ } \label{sect-lattice}

In Section~\ref{sec.six.valued.letkp} we have seen how the restrictions imposed on the Nmatrix of \letk\ by the rules of propagation 
of classicality  yield  a (deterministic) six-valued semantics for \letkp, the matrix $\mathcal{M}_6$. 
  $\mathcal{A}_{\letk}$ is  the six-valued multialgebra underlying the Nmatrix  $\mathcal{M}_{\letk}$,  
which was presented as a  swap structure in Definition~\ref{defNmatLETK}.   
 In order to comply with the  axiom and rules of propagation of classicality of  \letkp, 
 $\mathcal{A}_{\letk}$   
becomes the algebra $\mathcal{A}_6$ for \letkp. The latter  is the underlying algebra of the  logical matrix $\mathcal{M}_6$.

Twist structures  are special cases of swap structures: while the latter can be multialgebras, 
the former are algebras,  based on 
operations instead of multioperations, and so the respective semantics are deterministic.\footnote{For a more detailed discussion 
of swap and twist structures, see Coniglio et al. \cite{con.fig.gol.2018},  in particular sections 9.3 and 9.4.} 
In what follows, the algebra $\mathcal{A}_6$ will be presented  as a 
\textit{three-dimensional twist structure}. 
We will show  how  $\mathcal{A}_6$   
can be  generalized to twist algebras generated by arbitrary Boolean algebras. 
This will produce a class of twist-valued models for \letkp, one for each Boolean algebra, 
which characterizes \letkp. 
Moreover, it will be proved that these models are, indeed, bounded lattices in which 
 suprema and infima are respectively given by the operators $\tilde{\land}$ and $\tilde{\lor}$ and 
  the top and bottom elements are given by $T$ and $F$.

\begin{definition} [Twist structures for \letkp]  \label{twist-BA} \ 

\m\mh 
Let $\mathcal{B}= \langle {\bf B}, \sqcap,\sqcup,\Rightarrow,\sneg,0,1 \rangle$ be a Boolean algebra. The {\em twist structure for \letkp\ induced by $\mathcal{B}$} is the algebra $\mathcal{T}_{\mathcal{B}}=\langle \textsc{B}_{\letk}^{\mathcal{B}}, \tilde{\land},\tilde{\lor},\tilde{\to},\tilde{\neg},\tilde{\cons}\rangle$ over $\Sigma$ such that
$$\textsc{B}_{\letk}^{\mathcal{B}}=\{z \in {\bf B}^3 \ : \ z_3 \leq z_1 \sqcup z_2 \ \mbox{ and } \ z_1 \sqcap z_2 \sqcap z_3=0  \}$$
and the operations are defined as follows:  
\begin{itemize}\setl 
\item[(i)] $(z_1,z_2,z_3)\,\tilde{\land}\,(w_1,w_2,w_3) =  (z_1\sqcap w_1,z_2\sqcup w_2,(z_1 \sqcap z_3 \sqcap w_1 \sqcap w_3) \sqcup (z_2 \sqcap z_3) \sqcup (w_2 \sqcap w_3))$,
\item[(ii)] $(z_1,z_2,z_3)\,\tilde{\lor}\,(w_1,w_2,w_3)  =  (z_1\sqcup w_1,z_2\sqcap w_2, (z_2 \sqcap z_3 \sqcap w_2 \sqcap w_3) \sqcup (z_1 \sqcap z_3) \sqcup (w_1 \sqcap w_3))$,
\item[(iii)] $(z_1,z_2,z_3)\,\tilde{\to}\,(w_1,w_2,w_3)  =  (z_1\Rightarrow w_1,z_1\sqcap w_2, (z_1 \sqcap w_2 \sqcap w_3) \sqcup (z_2 \sqcap z_3) \sqcup (w_1 \sqcap w_3))$,
\item[(iv)] $\tilde{\neg}\,(z_1,z_2,z_3)  =  (z_2,z_1,z_3)$, 
\item[(v)] $\tilde{\circ}\,(z_1,z_2,z_3)  =  (z_3,\sneg z_3,1)$.

\end{itemize}
\end{definition}

\m\mh Note  that  $\mathcal{T}_{\mathcal{B}_2}$ is exactly $\mathcal{A}_6$. Each twist structure $\mathcal{T}_{\mathcal{B}}$ for \letkp\ naturally induces a logical matrix $\mathcal{M}(\mathcal{B}) = \langle \mathcal{T}_{\mathcal{B}}, \textrm{D}_{\mathcal{B}}\rangle$ where $\textrm{D}_{\mathcal{B}}=\{z \in \textsc{B}_{\letk}^{\mathcal{B}} \ : \ z_1 =1 \}$. Let $Mat(\letkp)$ be the class of logical matrices of the form $\mathcal{M}(\mathcal{B})$, and let $\models_{Mat(\letkp)}$ be the associated consequence relation. Notice that $\mathcal{M}(\mathcal{B}_2)=\mathcal{M}_6$.

In Definition~\ref{defNmatLETK} the multioperations of the multialgebra $\mathcal{A}_{\letk}$ were presented 
 in the framework of a  non-deterministic swap structure in which the third coordinate of each snapshot of \letk\ 
 \textit{is not} functionally 
 determined from the input(s). 
 On the other hand,  the tables of \letkp\ presented in Definition~\ref{def.six.val.LETKP}   
 make it clear that in  
 $\mathcal{A}_6$, the third coordinate of snapshots, obtained by applying the respective operation, is functionally determined  
 from the input(s). Note, in addition, that a snapshot is `classical' -- that is, it belongs to $\{T,F\}$ --  
 exactly when the third coordinate is 1. 
 Thus, in the case of $\tilde{\land}$, the output $z \,\tilde{\land}\, w$ is `classical' exactly when: (i)  $z=w=T$, or (ii) $z=F$, or $w=F$. This yields  the item (i) above. 
 By   analogous reasoning for  $\tilde{\lor}$ and $\tilde{\to}$, we obtain  items (ii) and (iii) 
above.  
Given the equivalence between $\con A$ and  $\con \neg A$,  
the negation $\tilde{\neg}$ in \letkp\  is deterministic, and defined by item (iv). 
Finally, the classicality operator $\tilde{\circ}$ of \letkp,  given clause (vp1) 
of Definition~\ref{val-sem-propag},  
also becomes  deterministic, and so defined by item (v).   
Indeed, by (vp1), $\rho(\cons\cons A)=1$. This, together with Proposition~\ref{der-rules},  
 implies that $\rho(\neg \con A)=\sneg \rho(\con A)$. Hence, by (vp1), $\rho(\neg A)=\sneg \rho(A)$ and $\rho(\cons\cons A)=1$.

\begin{remark} [Decidability in \letkp\ is reduced to decidability in \cpl] \label{formulas-terms} \ 

\m\mh 
Let  $A(p_1,\ldots,p_k)$ be a formula over the signature of \letkp\ depending at most on the propositional variables $p_1,\ldots,p_k$, and let $v$ be a valuation over a matrix $\mathcal{M}(\mathcal{B})$ for \letkp. Since $v(p_i)$ is in ${\bf B}^3$, its value can be represented by 3 new propositional variables $p_i^j$, each one representing $v(p_i)_j$, the $jth$-projection of $v(p_i)$ for $1 \leq j \leq 3$. Let $\mathcal{V}_3^k=\{p_i^j \ : \ 1 \leq i \leq k$ and $1 \leq j \leq 3\}$ be the set of such new propositional variables. Hence, the formal expression for $v(A)_1$ (that is, the first coordinate of $v(A)$) can be represented by a term $\tau_A$ in $For_3^k(\Sigma_{BA})$, the language generated by  the set of variables $\mathcal{V}_3^k$ over the signature $\Sigma_{BA}=\{\sqcap,\sqcup,\Rightarrow, \sneg,\bot,\top\}$ of Boolean algebras.  For instance, given $A=(p_1 \land \neg p_2) \vee \cons p_1$ and $B=\neg ((p_1 \land \neg p_2) \vee \cons p_1)$ then $v(A)_1$ and $v(B)_1$ are represented, respectively, by the terms $\tau_A=(p_1^1 \sqcap p_2^2) \sqcup p_1^3$ and  $\tau_B= (p_1^2 \sqcup p_2^1) \sqcap \sneg p_1^3$ in $For_3^k(\Sigma_{BA})$, given that $v((p_1 \land \neg p_2) \vee \cons p_1)_1=(v(p_1)_1 \sqcap v(\neg p_2)_1) \sqcup v(\cons p_1)_1 = (v(p_1)_1 \sqcap v(p_2)_2) \sqcup v(p_1)_3$ and $v(\neg ((p_1 \land \neg p_2) \vee \cons p_1))_1=v((p_1 \land \neg p_2) \vee \cons p_1)_2 = (v(p_1)_2 \sqcup v(\neg p_2)_2) \sqcap v(\cons p_1)_2 = (v(p_1)_2 \sqcup v(p_2)_1) \sqcap \sneg v(p_1)_3$.
The fact that every $v(p_i)$ is a snapshot instead of an arbitrary  triple in ${\bf B}^3$ is represented by the term $\bar{\tau}_k=\tau_1 \sqcap \ldots \sqcap \tau_k$ such that $\tau_i=(p_i^3 \Rightarrow (p_i^1 \sqcup p_i^2)) \sqcap \sneg(p_i^1 \sqcap p_i^2 \sqcap p_i^3)$ for $1 \leq i \leq k$.

Now,  suppose that $\Gamma \models_{\mathcal{M}_6} A$ such that $\Gamma = \{A_1, \ldots A_n\}$ is non-empty and all these formulas depend on $p_1,\ldots, p_k$. Let $B=A_1 \land \ldots \land A_n$, and let $v$ be a valuation over $\mathcal{M}_6$ (that is, $v:\mathcal{V} \to \textsc{B}_{\letk}$). Then, $v(B) \in \textrm{D}$ implies that $v(A) \in \textrm{D}$ or, equivalently, $v(B)_1=1$ implies that $v(A)_1=1$.
This means that, for every  homomorphism $h:For_3^k(\Sigma_{BA}) \to \mathcal{B}_2$ such that $h(p_i^j)=v(p_i)_j$ for $1 \leq i\leq k$ and $1 \leq j\leq 3$ for a valuation $v$,  $h(\tau_B)=1$ implies that $h(\tau_A)=1$ or, equivalently, $h(\tau_B \Rightarrow \tau_A)=1$. If $h$ is defined as above from a function $v:\mathcal{V} \to {\bf 2}^3$ then  $v(p_i) \in \textsc{B}_{\letk}$ for $1 \leq i \leq k$ iff  $h(\bar{\tau}_k)=1$. Then, for every  homomorphism $h:For_3^k(\Sigma_{BA}) \to \mathcal{B}_2$, $h(\bar{\tau}_k)=1$ (i.e., $(h(p_i^1),h(p_i^2) ,h(p_i^3)) \in \textsc{B}_{\letk}$ for $1 \leq i \leq k$) implies that $h(\tau_B \Rightarrow \tau_A)=1$. Equivalently, $h(\bar{\tau}_k \Rightarrow (\tau_B \Rightarrow \tau_A))=1$ for every $h$. In other words, $\Gamma \models_{\mathcal{M}_6} A$ if and only if $\mathcal{B}_2$ validates the equation $(\bar{\tau}_k \Rightarrow (\tau_B \Rightarrow \tau_A))\approx \top$ in the language of Boolean algebras.  The later is equivalent to saying that the formula $\bar{\tau}_k \Rightarrow (\tau_B \Rightarrow \tau_A)$ is a tautology in \cpl\ (expressed in the signature $\Sigma_{BA}$).
\end{remark}

\begin{theorem} [Soundness and completeness of \letkp\ w.r.t. $Mat(\letkp)$] \label{sound-compLETKPswap} For every set of formulas $\Gamma \cup \{A\}$ over $\Sigma$: $\Gamma \vdash_{\letkp} A$ \ iff \  $\Gamma \models_{Mat(\letkp)} A$.
\begin{proof} 
(Left to right - Soundness): Suppose that $\Gamma \vdash_{\letkp} A$, and assume that $\Gamma = \{A_1, \ldots A_n\}$ is non-empty (the proof for the case $\Gamma=\emptyset$ is analogous but easier). Assume that every formula in $\Gamma \cup \{A\}$ depends at most on the propositional variables $p_1,\ldots,p_k$. By Theorem~\ref{sound-LETKP-matrix},   $\Gamma\models_{\mathcal{M}_6} A$. By Remark~\ref{formulas-terms} (and using the notation established therein) if follows that $\mathcal{B}_2$ validates the equation $(\bar{\tau}_k \Rightarrow (\tau_B \Rightarrow \tau_A))\approx \top$ in the language of Boolean algebras, where $B=A_1 \land \ldots \land A_n$. By Remark~\ref{val-Boole}(2), any Boolean algebra $\mathcal{B}$ validates the equation $(\bar{\tau}_k \Rightarrow (\tau_B \Rightarrow \tau_A))\approx \top$. That is, for every homomorphism $h:For_3^k(\Sigma_{BA}) \to \mathcal{B}$, $h(\bar{\tau}_k \Rightarrow (\tau_B \Rightarrow \tau_A)))=1$ or, equivalently, $h(\bar{\tau}_k) \leq h(\tau_B \Rightarrow \tau_A)$. Now, let  $\mathcal{B}$ be a Boolean algebra and let $v$ be a valuation over the matrix $\mathcal{M}(\mathcal{B})$ such that $v(B) \in \textrm{D}_{\mathcal{B}}$. Let  $h:For_3^k(\Sigma_{BA}) \to \mathcal{B}$ be a homomorphism such that $h(p_i^j)=v(p_i)_j$ for $1 \leq i\leq k$ and $1 \leq j\leq 3$. Then  $h(\bar{\tau}_k)=1$, since $v(p_i) \in \textsc{B}_{\letk}^{\mathcal{B}}$ for $1 \leq i \leq k$. From this, $h(\tau_B \Rightarrow \tau_A)=1$, that is,   $h(\tau_B) \leq h(\tau_A)$. But $h(\tau_B)=1$, given that  $v(B) \in \textrm{D}_{\mathcal{B}}$ (which means that $v(B)_1=1$). From this we conclude that  $h(\tau_A)=1$. This means that $v(A)_1=1$, i.e. $v(A) \in \textrm{D}_{\mathcal{B}}$. This shows that $\Gamma \models_{\mathcal{M}(\mathcal{B})} A$ for every $\mathcal{B}$, hence  $\Gamma \models_{Mat(\letkp)} A$.\\[1mm]
(Right to left - Completeness): Suppose that $\Gamma  \models_{Mat(\letkp)} A$. Then, in particular,  $\Gamma \models_{\mathcal{M}(\mathcal{B}_2)} A$. But $\mathcal{M}(\mathcal{B}_2)$ is $\mathcal{M}_6$, hence $\Gamma \models_{\mathcal{M}_6} A$. By Theorem~ \ref{comple-LETKP-matrix}, $\Gamma \vdash_{\letkp} A$.
\end{proof}\end{theorem}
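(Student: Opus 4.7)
The plan is to prove each direction separately, exploiting the fact that $\mathcal{M}_6 = \mathcal{M}(\mathcal{B}_2)$ is a particular member of $Mat(\letkp)$, so that completeness follows immediately from Theorem~\ref{comple-LETKP-matrix}, while soundness requires lifting the soundness theorem for $\mathcal{M}_6$ (Theorem~\ref{sound-LETKP-matrix}) to every Boolean algebra. The key observation is that the variety of Boolean algebras is generated by $\mathcal{B}_2$ (Remark~\ref{val-Boole}(2)), so any Boolean equation valid in $\mathcal{B}_2$ is valid in every Boolean algebra.

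For the completeness direction, suppose $\Gamma \models_{Mat(\letkp)} A$. Since $\mathcal{M}(\mathcal{B}_2) = \mathcal{M}_6$ belongs to $Mat(\letkp)$, specializing gives $\Gamma \models_{\mathcal{M}_6} A$, and Theorem~\ref{comple-LETKP-matrix} yields $\Gamma \vdash_{\letkp} A$. That is the one-line half.

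For soundness, assume $\Gamma \vdash_{\letkp} A$ where (the interesting case) $\Gamma = \{A_1,\ldots,A_n\}$ is finite and all formulas involved depend at most on variables $p_1,\ldots,p_k$; set $B = A_1 \land \cdots \land A_n$. First, Theorem~\ref{sound-LETKP-matrix} gives $\Gamma \models_{\mathcal{M}_6} A$. Next, I would invoke the encoding of Remark~\ref{formulas-terms}: introduce, for each $p_i$, three fresh Boolean variables $p_i^1, p_i^2, p_i^3$ representing the three coordinates of $v(p_i)$, build the Boolean terms $\tau_A$, $\tau_B$, and the snapshot-constraint term $\bar{\tau}_k$, and translate the statement ``$v(B)\in\textrm{D}$ implies $v(A)\in\textrm{D}$ for every valuation over $\mathcal{M}_6$'' into the assertion that the Boolean equation $\bar{\tau}_k \Rightarrow (\tau_B \Rightarrow \tau_A) \approx \top$ is valid in $\mathcal{B}_2$.

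Now the main step: by Remark~\ref{val-Boole}(2), this equation is valid in \emph{every} Boolean algebra $\mathcal{B}$. So let $\mathcal{B}$ be arbitrary and $v$ a valuation over $\mathcal{M}(\mathcal{B})$ with $v(B) \in \textrm{D}_{\mathcal{B}}$. Define a homomorphism $h: For_3^k(\Sigma_{BA}) \to \mathcal{B}$ by $h(p_i^j) = v(p_i)_j$. Because $v(p_i)\in \textsc{B}_{\letk}^{\mathcal{B}}$, the snapshot constraints hold, i.e.\ $h(\bar{\tau}_k) = 1$; hence $h(\tau_B \Rightarrow \tau_A) = 1$, i.e.\ $h(\tau_B) \leq h(\tau_A)$. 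Since $v(B)_1 = h(\tau_B) = 1$, we obtain $h(\tau_A) = v(A)_1 = 1$, so $v(A) \in \textrm{D}_{\mathcal{B}}$. This proves $\Gamma \models_{\mathcal{M}(\mathcal{B})} A$ for every $\mathcal{B}$, i.e.\ $\Gamma \models_{Mat(\letkp)} A$.

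The main obstacle, and essentially the whole content of the argument, is the translation from entailment in the three-dimensional twist structures to equational validity in Boolean algebras via Remark~\ref{formulas-terms}. Once that translation is in hand, the generation of the variety of Boolean algebras by $\mathcal{B}_2$ does the heavy lifting for free. The case $\Gamma = \emptyset$ is handled analogously by dropping $\tau_B$ and verifying $\bar{\tau}_k \Rightarrow \tau_A \approx \top$ in $\mathcal{B}_2$, hence in every $\mathcal{B}$.
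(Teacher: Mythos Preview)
Your proposal is correct and follows essentially the same approach as the paper's proof: completeness by specializing to $\mathcal{M}(\mathcal{B}_2)=\mathcal{M}_6$ and invoking Theorem~\ref{comple-LETKP-matrix}, and soundness by translating $\mathcal{M}_6$-validity into a Boolean equation via Remark~\ref{formulas-terms}, lifting it to all Boolean algebras via Remark~\ref{val-Boole}(2), and then evaluating along the homomorphism $h(p_i^j)=v(p_i)_j$. The structure, the key lemmas invoked, and even the notation match the paper almost verbatim.
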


\begin{remark} \label{lattice}
Recall that, in addition to the usual order-theoretic definition,   a  {\em lattice} can be equivalently defined as an algebra $\langle L, \sqcap,\sqcup\rangle$ such that: 
\begin{itemize} 
\item[] (1)~$a \sqcap a= a = a \sqcup a$; 
\item[] (2)~$a \sqcap b=b \sqcap a$ and $a \sqcup b=b \sqcup a$; 
\item[] (3)~$a \sqcap(b \sqcap c)=(a \sqcap b) \sqcap c$ and $a \sqcup(b \sqcup c)=(a \sqcup b) \sqcup c$; and
\item[] (4) $a \sqcap(a \sqcup b)=a= a \sqcup(a \sqcap b)$, for every $a,b,c \in L$
\end{itemize}
\noi 
and so  the partial order is defined as: $a \leq b$ iff  $a=a \sqcap b$ (iff $b= a \sqcup b$). 
\end{remark}

\begin{theorem}  \label{twist-lattice}
For every Boolean algebra $\mathcal{B}$ the twist structure $\mathcal{T}_{\mathcal{B}}$ is a bounded lattice in which the infimum and supremum are given by $\tilde{\land}$ and $\tilde{\lor}$, respectively, and  $T=(1,0,1)$ and $F=(0,1,1)$ are the top and bottom elements (where $1$ and $0$ are the top and bottom elements of $\mathcal{B}$).
\begin{proof} It will be shown that, for every Boolean algebra $\mathcal{B}$, the algebra $\mathcal{T}_{\mathcal{B}}$ is such that $\tilde{\land}$ and $\tilde{\lor}$ satisfy conditions (1)-(4) of Remark~\ref{lattice}.

\m\mh  
(1) Let $\#\in \{\land,\lor\}$. Given that $\mathcal{B}$ satisfies condition~(1), it is clear that $(z \,\tilde{\#}\, z)_i=z_i$ for $i=1,2$. On the other hand,  $(z \,\tilde{\#}\, z)_3=(z_1 \sqcap z_3) \sqcup (z_2 \sqcap z_3)= z_3 \sqcap (z_1 \sqcup z_2)=z_3$. Hence, $z \,\tilde{\#}\, z=z$ for every $z$ and $\#\in \{\land,\lor\}$.

\m\mh 
(2) Clearly $z \,\tilde{\#}\, w=w\,\tilde{\#}\, z$ for every $z,w$ and  $\#\in \{\land,\lor\}$, by the very definitions and by the 
fact  that $\mathcal{B}$ satisfies condition~(2). 

\m\mh 
(3) Let us first prove that $z \,\tilde{\land}\,(w \,\tilde{\land}\, u)=(z \,\tilde{\land}\, w) \,\tilde{\land}\, u$. Observe that $(z \,\tilde{\land}\,(w \,\tilde{\land}\, u))_i=((z \,\tilde{\land}\, w) \,\tilde{\land}\, u)_i$ for $i=1,2$, by definition of $\tilde{\land}$ and the fact that $\mathcal{B}$ satisfies condition~(3). Now,  let $a \defi (w \,\tilde{\land}\, u)_3=(w_1 \sqcap w_3 \sqcap u_1 \sqcap u_3) \sqcup (w_2 \sqcap w_3) \sqcup (u_2 \sqcap u_3)$ and $b\defi (z \,\tilde{\land}\, w)_3= (z_1 \sqcap z_3 \sqcap w_1 \sqcap w_3) \sqcup (z_2 \sqcap z_3) \sqcup (w_2 \sqcap w_3)$. Hence,

\begin{itemize}\setl 
\item[] $c\defi (z \,\tilde{\land}\,(w \,\tilde{\land}\, u))_3=(z_1 \sqcap z_3 \sqcap (w_1 \sqcap u_1) \sqcap a) \sqcup (z_2 \sqcap z_3) \sqcup ((w_2 \sqcup u_2) \sqcap a)$, and
\item[] $d\defi ((z \,\tilde{\land}\, w) \,\tilde{\land}\, u)_3=((z_1 \sqcap w_1) \sqcap b \sqcap u_1 \sqcap u_3) \sqcup ((z_2 \sqcup w_2) \sqcap b) \sqcup (u_2 \sqcap u_3)$.
\end{itemize}

\noi By using an automatic prover for  tautologies in \cpl\ it is immediate to check that the formula $(A_c \Rightarrow A_d) \sqcap (A_d \Rightarrow A_c)$ is a tautology in \cpl\  (expressed in the signature $\Sigma_{BA}$), where $A_c$ and $A_d$ are the propositional formulas in $For_3^3(\Sigma_{BA})$, respectively obtained from the terms $c$ and $d$ by replacing $z_j, w_j,u_j$ by the propositional variables $p_1^j,p_2^j,p_3^j$, for $1 \leq j \leq 3$ (recalling Remark~\ref{formulas-terms} and the notation established therein). This means that the equation $A_c\approx A_d$ holds in $\mathcal{B}_2$ and so it holds in every Boolean algebra $\mathcal{B}$, as observed in Remark~\ref{val-Boole}(2). That is, $(z \,\tilde{\land}\,(w \,\tilde{\land}\, u))_3=((z \,\tilde{\land}\, w) \,\tilde{\land}\, u)_3$ for every $\mathcal{B}$ and every $z,w,u$ in $\textsc{B}_{\letk}^{\mathcal{B}}$. Therefore, $z \,\tilde{\land}\,(w \,\tilde{\land}\, u)=(z \,\tilde{\land}\, w) \,\tilde{\land}\, u$  for every $\mathcal{B}$ and every $z,w,u \in \textsc{B}_{\letk}^{\mathcal{B}}$. The proof of associativity of $\tilde{\lor}$ is obtained by similar arguments.

\m\mh
(4) Observe that, by the definition of $\tilde{\land}$ and $\tilde{\lor}$, $z_i=(z \,\tilde{\lor}\,(z \,\tilde{\land}\, w))_i$ for every $z,w$ and $i=1,2$, given that $\mathcal{B}$ has property~(4). Now, let  $a\defi (z \,\tilde{\land}\, w)_3= (z_1 \sqcap z_3 \sqcap w_1 \sqcap w_3) \sqcup (z_2 \sqcap z_3) \sqcup (w_2 \sqcap w_3)$ and $b \defi (z \,\tilde{\lor}\,(z \,\tilde{\land}\, w))_3=(z_2 \sqcap z_3 \sqcap(z_2 \sqcup w_2) \sqcap a) \sqcup (z_1 \sqcap z_3) \sqcup(z_1 \sqcap w_1 \sqcap a)$. As we have done in item~(3), let $A_b$ be the propositional formula  in $For_3^2(\Sigma_{BA})$ obtained by replacing $z_j$ and $w_j$ in the expression $b$ by the propositional variables $p_1^j$ and $p_2^j$, for $1 \leq j \leq 3$. By using an automatic prover for  tautologies in \cpl\ it can be checked that, in this case, the formula $A \defi (p_1^3 \Rightarrow A_b) \sqcap (A_b \Rightarrow p_1^3)$ {\em is not} a tautology in \cpl\ (expressed in the signature $\Sigma_{BA}$). However, the  only rows in which $A$ gets the value $0$ is when the triple $p_1=(p_1^1,p_1^2,p_1^3)$ gets the value $(0,0,1)$ or when the triple $p_2=(p_2^1,p_2^2,p_2^3)$ gets the value $(1,1,1)$, and these triples do not correspond to snapshots in $\textsc{B}_{\letk}$. Consider then the formula $B\defi \bar{\tau}_2 \Rightarrow A$ (where $\bar{\tau}_2$ is defined as in Remark~\ref{formulas-terms} by taking $k=2$). By the previous considerations, it follows that $B$ is a tautology. That is, the equation $(\bar{\tau}_2 \Rightarrow A)\approx \top$ in the language of Boolean algebras
holds in $\mathcal{B}_2$. By Remark~\ref{val-Boole}(2), that equation holds in any Boolean algebra $\mathcal{B}$. That is, for every homomorphism $h:For_3^2(\Sigma_{BA}) \to \mathcal{B}$, $h(\bar{\tau}_2 \Rightarrow A)=1$ or, equivalently, $h(\bar{\tau}_2) \leq h(A)$. Given  a Boolean algebra $\mathcal{B}$, let  $z=(z_1,z_2,z_3)$ and $w=(w_1,w_2,w_3)$ in $\textsc{B}_{\letk}^{\mathcal{B}}$, and let $h:For_3^2(\Sigma_{BA}) \to \mathcal{B}$ be a homomorphism such that $h(p_1^j)=z_j$ and  $h(p_2^j)=w_j$ for $1 \leq j \leq 3$. Then, $h(\bar{\tau}_2)=1$ and so $h(A)=1$.  This means that $z_3=b=(z \,\tilde{\lor}\,(z \,\tilde{\land}\, w))_3$, hence $z=z \,\tilde{\lor}\,(z \,\tilde{\land}\, w)$ for every $z,w$  in $\textsc{B}_{\letk}^{\mathcal{B}}$. In a similar way it can be proved that $z=z \,\tilde{\land}\,(z \,\tilde{\lor}\, w)$ for every $z,w$  in $\textsc{B}_{\letk}^{\mathcal{B}}$. 

\m\mh This shows that the twist structure $\mathcal{T}_{\mathcal{B}}$ is a lattice, for every Boolean algebra $\mathcal{B}$. Clearly, $z \,\tilde{\land}\, T= z = z \,\tilde{\lor}\, F$, for every $z \in \textsc{B}_{\letk}^{\mathcal{B}}$. Therefore  $\mathcal{T}_{\mathcal{B}}$ is a bounded lattice with top and bottom elements given by $T$ and $F$, respectively.
\end{proof}\end{theorem}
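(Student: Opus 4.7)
My plan is to verify directly the four lattice axioms (1)--(4) of Remark~\ref{lattice} for $\tilde{\land}$ and $\tilde{\lor}$, and then check that $T=(1,0,1)$ and $F=(0,1,1)$ are absorbed as top and bottom. The strategy throughout is to exploit the reduction observed in Remark~\ref{formulas-terms}: any candidate equation between terms in the three coordinates reduces to an equational question about the underlying Boolean algebra, which by Remark~\ref{val-Boole}(2) can be checked in $\mathcal{B}_2$ alone, i.e.\ by a \cpl-tautology check, possibly relativised by the snapshot constraints $\bar{\tau}_k$.

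Here is the order of work. First, for idempotence of $\tilde{\land}$ and $\tilde{\lor}$ the first two coordinates are immediate since $\mathcal{B}$ is a lattice, and the third coordinate of $z\,\tilde{\land}\,z$ simplifies by absorption in $\mathcal{B}$ to $z_3 \sqcap (z_1 \sqcup z_2)$; since $z$ is a snapshot we have $z_3 \leq z_1 \sqcup z_2$, so this equals $z_3$. The same argument handles $\tilde{\lor}$. Commutativity is transparent from the symmetric form of the defining formulas for $\tilde{\land}$ and $\tilde{\lor}$ in all three coordinates. For the top/bottom claim, $z\,\tilde{\land}\,T$ and $z\,\tilde{\lor}\,F$ reduce termwise using the snapshot condition in the same way.

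Second, for associativity of $\tilde{\land}$ (and symmetrically for $\tilde{\lor}$): the first two coordinates of $z\,\tilde{\land}\,(w\,\tilde{\land}\,u)$ and $(z\,\tilde{\land}\,w)\,\tilde{\land}\,u$ agree by associativity of $\sqcap$ and $\sqcup$ in $\mathcal{B}$. For the third coordinate, I substitute the defining expressions to obtain two Boolean terms $c$ and $d$ on the six coordinate-variables. Rather than simplifying by hand, I translate the identity $c=d$ into a classical propositional formula over $\Sigma_{BA}$ (using the encoding of Remark~\ref{formulas-terms}) and check that it is a \cpl-tautology; if so, by Remark~\ref{val-Boole}(2) the equation $c \approx d$ holds in every Boolean algebra, and thus in $\mathcal{B}$.

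Third, for absorption $z = z\,\tilde{\lor}\,(z\,\tilde{\land}\,w) = z\,\tilde{\land}\,(z\,\tilde{\lor}\,w)$: again coordinates 1 and 2 follow from absorption in $\mathcal{B}$. The third coordinate is the delicate part, and I expect it to be the main obstacle. The bare Boolean identity between the third coordinates is \emph{not} a \cpl-tautology on its own, as one can verify that it fails when the formal triples for $z$ or $w$ are set to $(0,0,1)$ or $(1,1,1)$. However, these assignments precisely violate the snapshot constraints, so I add the hypothesis $\bar{\tau}_2$ (with $k=2$, as in Remark~\ref{formulas-terms}) and check that $\bar{\tau}_2 \Rightarrow A$ is a tautology, where $A$ encodes the desired equality of third coordinates. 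Since $\bar{\tau}_2$ is automatically satisfied for snapshots $z,w \in \textsc{B}_{\letk}^{\mathcal{B}}$, the equation then lifts from $\mathcal{B}_2$ to every Boolean algebra via Remark~\ref{val-Boole}(2), finishing the verification. The second absorption identity is handled symmetrically.

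The main obstacle is the bookkeeping of the third coordinate under associativity and absorption: it combines three pairwise-incompatible disjuncts whose simplification genuinely uses the snapshot constraint $z_1 \sqcap z_2 \sqcap z_3=0$ and $z_3 \leq z_1 \sqcup z_2$. The uniform way to manage this, rather than a symbolic reduction by hand, is to offload the finite check to \cpl\ (with $\bar{\tau}_k$ as a premise when needed) and invoke Remark~\ref{val-Boole}(2) to obtain the equation over an arbitrary~$\mathcal{B}$.
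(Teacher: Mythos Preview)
Your proposal is correct and follows essentially the same route as the paper's proof: coordinate-wise verification of the lattice axioms, with the first two coordinates handled directly by the lattice laws of $\mathcal{B}$ and the third coordinate reduced to a \cpl-tautology check (under the snapshot hypothesis $\bar{\tau}_k$ when needed), then lifted to arbitrary Boolean algebras via Remark~\ref{val-Boole}(2). You even anticipate the key subtlety the paper highlights, namely that absorption in the third coordinate fails as a bare Boolean identity precisely at the non-snapshot triples $(0,0,1)$ and $(1,1,1)$, so that $\bar{\tau}_2$ must be added as a premise; one minor slip is that the associativity check for the third coordinate involves nine coordinate variables (three per triple), not six.
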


\begin{proposition} \label{def-order-twist}
The order  in the lattice $\mathcal{T}_{\mathcal{B}}$ is given as follows:
\begin{itemize}
\item[] $(z_1,z_2,z_3) \leq (w_1,w_2,w_3)  $  if and only if:   

\ \quad \quad \quad  $ z_1 \leq w_1, \ z_2 \geq w_2, \ z_2 \sqcap z_3 \geq w_2 \sqcap w_3,   
\mbox{ and } z_3 \leq (z_1 \sqcap w_3) \sqcup z_2$.
\end{itemize}
\begin{proof}
By definition of the order $\leq$ in $\textsc{B}_{\letk}^{\mathcal{B}}$ induced by the algebraic lattice structure of $\mathcal{T}_{\mathcal{B}}$,  and  according to Theorem~\ref{twist-lattice}, it follows that 
$(z_1,z_2,z_3) \leq (w_1,w_2,w_3)$ \ iff  \ $z_1 \leq w_1$, $z_2 \geq w_2$,  \  and \\[1mm]
\indent $(\ast) \ \ \ z_3=(z_1 \sqcap z_3 \sqcap w_3) \sqcup (z_2 \sqcap z_3) \sqcup (w_2 \sqcap w_3)$.\\[1mm] 
By taking infimum w.r.t. $z_2$ in both sides of $(\ast)$ we get that $z_2 \sqcap z_3 = (z_2 \sqcap z_1 \sqcap z_3 \sqcap w_3) \sqcup (z_2 \sqcap z_3) \sqcup (z_2 \sqcap w_2 \sqcap w_3)$. Given that $z_2 \sqcap z_1 \sqcap z_3=0$ and $z_2 \sqcap w_2=w_2$ (since $z_2 \geq w_2$), this implies that $z_2 \sqcap z_3 = (z_2 \sqcap z_3) \sqcup (w_2 \sqcap w_3)$. That is,  $z_2 \sqcap z_3 \geq w_2 \sqcap w_3$. Moreover, by considering the latter relation in equation $(\ast)$ we get that $z_3 = (z_1 \sqcap z_3 \sqcap w_3) \sqcup (z_2 \sqcap z_3) = z_3 \sqcap ((z_1 \sqcap w_3) \sqcup z_2)$.  This means that $ z_3 \leq (z_1 \sqcap w_3) \sqcup z_2$. The proof of the converse is analogous (but a little easier).
\end{proof}\end{proposition}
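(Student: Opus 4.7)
The plan is to use the algebraic characterization of the lattice order coming from Theorem~\ref{twist-lattice}, namely $z \leq w$ iff $z = z \,\tilde{\land}\, w$, and unpack the resulting equations coordinate by coordinate, then simplify using the defining constraint $z_1 \sqcap z_2 \sqcap z_3 = 0$ of the snapshots in $\textsc{B}_{\letk}^{\mathcal{B}}$.

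First I would apply the definition of $\tilde{\land}$ from Definition~\ref{twist-BA}(i). The equation $z = z \,\tilde{\land}\, w$ splits into three equations: $z_1 = z_1 \sqcap w_1$, $z_2 = z_2 \sqcup w_2$, and the third-coordinate equation
\[
z_3 \;=\; (z_1 \sqcap z_3 \sqcap w_1 \sqcap w_3) \sqcup (z_2 \sqcap z_3) \sqcup (w_2 \sqcap w_3).
\]
The first two are exactly $z_1 \leq w_1$ and $z_2 \geq w_2$. Using $z_1 \leq w_1$, the first disjunct in the third equation simplifies to $z_1 \sqcap z_3 \sqcap w_3$, so we obtain the cleaner form
\[
(\ast) \quad z_3 \;=\; (z_1 \sqcap z_3 \sqcap w_3) \sqcup (z_2 \sqcap z_3) \sqcup (w_2 \sqcap w_3).
\]

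Next, to extract the two remaining conditions from $(\ast)$, I would take the meet of both sides with $z_2$. The middle term absorbs to $z_2 \sqcap z_3$; the first term vanishes because $z_1 \sqcap z_2 \sqcap z_3 = 0$ (the defining snapshot constraint); and the last term becomes $w_2 \sqcap w_3$ since $z_2 \geq w_2$. This yields $z_2 \sqcap z_3 = (z_2 \sqcap z_3) \sqcup (w_2 \sqcap w_3)$, i.e. $z_2 \sqcap z_3 \geq w_2 \sqcap w_3$. Feeding this inequality back into $(\ast)$, the third disjunct is absorbed by the second, leaving $z_3 = (z_1 \sqcap z_3 \sqcap w_3) \sqcup (z_2 \sqcap z_3) = z_3 \sqcap \bigl((z_1 \sqcap w_3) \sqcup z_2\bigr)$, which is equivalent to $z_3 \leq (z_1 \sqcap w_3) \sqcup z_2$.

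For the converse, assume the four listed conditions. The first two give $z_1 = z_1 \sqcap w_1$ and $z_2 = z_2 \sqcup w_2$. For the third equation, starting from $z_3 \leq (z_1 \sqcap w_3) \sqcup z_2$ I would meet with $z_3$ to obtain $z_3 = (z_1 \sqcap z_3 \sqcap w_3) \sqcup (z_2 \sqcap z_3)$, and then adjoin the disjunct $w_2 \sqcap w_3$, which is absorbed by $z_2 \sqcap z_3$ thanks to $w_2 \sqcap w_3 \leq z_2 \sqcap z_3$. This recovers $(\ast)$ and hence $z = z \,\tilde{\land}\, w$.

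The main obstacle is the forward direction, specifically the step of teasing the two independent inequalities out of the single opaque identity $(\ast)$; the trick is to meet with $z_2$ and exploit the snapshot identity $z_1 \sqcap z_2 \sqcap z_3 = 0$ to kill one of the disjuncts. The converse direction is a routine absorption argument in a Boolean algebra.
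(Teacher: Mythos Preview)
Your proposal is correct and follows essentially the same route as the paper: unpack $z = z\,\tilde{\land}\,w$ coordinatewise, simplify the third-coordinate identity to $(\ast)$, meet both sides with $z_2$ and use the snapshot constraint $z_1 \sqcap z_2 \sqcap z_3 = 0$ together with $z_2 \geq w_2$ to extract $z_2 \sqcap z_3 \geq w_2 \sqcap w_3$, then feed this back into $(\ast)$ to obtain $z_3 \leq (z_1 \sqcap w_3) \sqcup z_2$. Your write-up is in fact slightly more detailed than the paper's, which leaves the converse direction to the reader.
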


\subsection{On the lattice structure of \letkp} \label{SectSix}

In Belnap  \citep{belnap.1977.how} we find two lattice-orderings defined by the four semantic values of \fde\ (and 
 by the four values of \fdeto\ as well), called 
\textbf{A4} and \textbf{L4}. 
 The lattice \textbf{A4} has   \textsf{n} at the bottom and \textsf{b} at the top: 

$$\xymatrix{
&{\bo} \ar@{-}[dl]\ar@{-}[dr] &&\\
T \ar@{-}[dr]   & & F \ar@{-}[dl]  \\
& \nei &
}$$ 
 
 \m\mh
The partial order  $a \leq b$ is read as `$a$ approximates the information in $b$'.   
 The underlying idea is that 
 the amount of information     grows from  bottom to top, 
 in the sense that \textit{T} and \textit{F} convey  more information that \textsf{n}, 
 and \textsf{b} conveys more information than both  \textit{T} and \textit{F}. In Belnap's words,

\begin{quote}
None is at the bottom because it gives no information at all; and Both is at the top because it gives too much (inconsistent) information 
\cite[p.~39]{belnap.1977.how}.  
\end{quote}

\mh  This order  is  clear if we think of the values \textsf{n}, \textit{F}, \textit{T}, and 
 \textsf{b}  as subsets of $\{ 0,1\}$,   respectively, $\emptyset$, $\{ 0\}$, $\{1 \}$, and $\{0,1 \}$, 
 and $\leq$ as the relation of inclusion $\subseteq$.  
Note that:  (i) positive and negative information, represented by the values \textit{T} and \textit{F} assigned to a sentence $A$, 
are on a par in this order, and (ii)  it is    assumed that a contradiction   
 $A\land\neg A$ not only does contain information but also contains the highest amount of information on $A$. 
 This is in line with 
  the already mentioned notion  of information as meaningful data, which considers false information as information 
  (see e.g.      \cite{dunn2008,fetzer2004a}), and is the notion of information that underlies the  
 interpretation of \fde\ worked out by Belnap and Dunn, as well as  the intended interpretation of \lets\ in terms of information.   
 
The \textit{logical lattice} \textbf{L4}, on the other hand,    
 has   \textit{F} at the bottom and \textit{T} at the top. 
 The join operation is given by $\tilde{\lor}$ and the meet by $\tilde{\land}$. 
 It is represented by the following diagram: 

$$\xymatrix{
&{T} \ar@{-}[dl]\ar@{-}[dr] &&\\
\nei \ar@{-}[dr]   & & \bo \ar@{-}[dl]  \\
& F &
}$$

\mh The partial order of \textbf{L4} can be defined as follows. Think  of the values $T, F, \bo$, and $\nei$ as pairs $(a_1,a_2)$, 
where $a_1$ and $a_2$ represent, respectively, the values of  formulas $A$ and $\neg A$ in a given bivaluation. 
Thus, $T, F, \bo$, and $\nei$ are rep\-re\-sent\-ed, respectively, by the pairs $(1,0), (0,1), (1,1),$ and $(0,0)$. 
Now, the order is given as follows: $(a_1,a_2) \leq ( b_1,b_2)$ if and only if $a_1 \leq b_1 $ and $ a_2 \geq b_2$. 
This order 
is informally explained by Belnap as follows: 
\begin{quote}
[T]he worst thing is to be told something
is false, simpliciter. You are better off (it is one of our hopes) in either being told nothing
about it, or in being told both that it is true and also that it is false; while of course best
of all is to be told it is true, with no muddying of the waters 
\cite[p.~42]{belnap.1977.how}.
\end{quote}
 
Now, let us call \textbf{L6} the lattice obtained by extending 
 \textbf{L4}  to six values. 
The values \textit{T} and \textit{F} of \fde\ become $T_0$  and $F_0$, and we add the values \textit{T} and \textit{F} of \letkp\ 
as, respectively,  a new top  and a new bottom. 
The order of \textbf{L6} has been given in Proposition~\ref {def-order-twist}. The lattice structure of \textbf{L6} can be displayed  as follows:

$$
\xymatrix{
&T\ar@{-}[d]&\\
& T_0\ar@{-}[dl]\ar@{-}[dr] &\\
\nei \ar@{-}[dr] & & \bo\ar@{-}[dl]\\
& F_0\ar@{-}[d]\\
&F&
}$$

It should be observed that the six-valued lattice above is exactly the lattice $\mathcal{T}_{\mathcal{B}_2}$
with the order defined as in Proposition~\ref{def-order-twist}.

\begin{remark} \label{crystal}
The lattice structure of  \textbf{L6}, together with its negation (and expanded by a suitable implication), 
has already appeared in the context of relevance logic in Routley  (later Sylvan) \cite[p.~224]{rout:79}. 
 According to this author, this structure (also called {\em crystal lattice}) was first proposed by   Meyer.\footnote{In the context of order theory this lattice was introduced, possibly for the first time in the literature by means of a Haase diagram, in~\cite{klein:35}, fig. 11, p. 613. We thank Rodolfo Ertola for pointing out this fact to us.} As mentioned by  Brady in~\cite[pp. 65-66]{brady.1984}, 
  this six-valued lattice structure together with its negation and implication characterizes, as a logical matrix in which every element other than $F$ is designated, the finitely axiomatizable relevance system {\bf CL} (see Sylvan et al. \cite[p. 114]{sylvan:2003}).
The implication $\to_c$ of the crystal lattice  is such that $A \to_c (B \to_c A)$ is not a valid schema, hence the logic {\bf CL} is different from the $\circ$-free fragment of \letkp.
 For more results about the crystal lattice see~\cite{kramer:2020}.

\end{remark}

It is worth noting that, in the diagram of \textbf{L6} above (including its negation),  \textbf{L4} corresponds to  the inner diamond.
The order of \textbf{L6} can be explained by modifying  the quotation above from Belnap:   
the worst thing is to be told something
has been \textit{conclusively} established as false, which is the value \textit{F};   
not too bad is to be false but not conclusively false, the value $F_0$. 
To be told something is true, although not conclusively, the value $T_0$,  
is better, but the best of all is to be told it is conclusively true, which is the value \textit{T}.

Finally, it is worth mentioning that  if we lay on its side the \textbf{L6} lattice, with \textsf{n} at the bottom,   
we obtain a  meet-semilattice --  call it \textbf{A6} --   that fits the idea of the approximation lattice \textbf{A4}: 
  the amount of information     grows from  bottom to top, but 
  from the nodes $T_0$ and  $F_0$, the new information can be that the information already available is reliable, and in this case
  we get the values $T$ and $F$ respectively, or that conflicting information is obtained, and in this case $T_0$ and  $F_0$ 
   collapse    in the value $\bo$. 
  
  The order of \textbf{A6} can be represented by the relation of inclusion $\subseteq$. Consider the set $\{ 0, 1, \mathsf{c} \}$, 
  where 0 and 1 mean, respectively, negative and positive information, and $\mathsf{c} $, together with 0 or 1, means that the 
  respective information is reliable. The six values \textsf{n}, $T_0$, $F_0$, \textit{T}, \textit{F}, \textsf{b}  correspond, 
  respectively, to the following sets:  $\emptyset$, $\{ 1\}$, $\{ 0 \}$, $\{ 1, \mathsf{c} \}$, $\{ 0, \mathsf{c} \}$, 
   $\{ 1,0 \}$. Note that the sets $\{ 0,1,\mathsf{c} \}$ and $\{ \mathsf{c} \}$ have been dropped from the powerset of 
   $\{ 0, 1, \mathsf{c} \}$: the former because it cannot be that positive and negative information together are reliable, 
   and the latter because $\mathsf{c}$ only makes sense together with either positive or negative information. 
The meets are  given by $\cap$  and the (existing) joins by $\cup$, both operations      
   restricted to the given domain of six subsets of $\{0,1,\mathsf{c}\}$. 
   Taking this into account, $\{ 0, \mathsf{c} \} \cap  \{ 1,\mathsf{c} \} $   is the empty set, rather than $\{ \mathsf{c} \}$, 
   since the latter  does not belong to the domain of the meet-semilattice.
  Analogously, note  that the join of  \textit{T} and \textit{F}, as well as the joins of   \textit{T} and \textsf{b}, and 
  also \textsf{b} and  
  \textit{F},  do  not exist in the meet-semilattice.   
   Then, the least element is  \textsf{n}, but there is no greatest element.

$$\xymatrix{
T \ar@{-}[dr]   &{}&{\textsf{b}} \ar@{-}[dl]\ar@{-}[dr] & & F\ar@{-}[dl]&\\
&T_0 \ar@{-}[dr]   & & F_0 \ar@{-}[dl]  \\
&& \textsf{n}   &
}$$

\m\mh Note that the diagram above can  be interpreted bottom-up as stages of a database with respect  to a sentence $A$. 
In the bottom there is no information about $A$, neither positive nor negative, which corresponds to the semantic value 
\textsf{n} assigned to $A$.  
From this stage, there are two alternatives: either positive information $A$ or negative information $A$ (i.e., $\neg A$) 
is obtained, that is, $A$ is assigned, respectively, $T_0$ and $F_0$. Now, in each case, two alternatives are possible: 
either the information that  $A$ ($\neg A$) is reliable is obtained, yielding the value $T$ ($F$), or contradictory 
information is obtained, and so $\textsf{b}$ is assigned to $A$ (and to $\neg A$ as well).

\section{Extending a minimal $LET$: the logic \letfp}   \label{sec.letfp}

We have  already mentioned the logic  \letfm, a  minimal logic of evidence and truth that extends \fde\ with the classicality operator  
\con\  and the  rules $EXP^{\circ}$  and $PEM^{\circ}$. 
The logic  \letfp\  is  the extension of \letfm\  with the axiom $\con\con A$ and the rules of propagation of 
classicality for \disj, \conj, and \wneg\ taken from Definition~\ref{propag-rules}.  
 It can also be  defined as the \imp-free fragment of \letkp.

In this brief section we start by \letfm, which admits a valuation semantics and  a (non-deterministic) sound and complete 
six-valued semantics based on swap-structures.  We then move to \letfp, which, like \letkp, is semantically characterized by a six-valued logical matrix, as well as by a class of logical matrices based on twist structures.

Thus, consider the propositional signature  $\Sigma_1=\{\land,\lor, \neg,\cons\}$.
A natural deduction system for  \letfm\   
is obtained    by dropping  rules ${\to} I$, ${\to} E$, $\to_{CL}$, $\neg{\to} I$, and $\neg{\to} E$ 
from Definition~\ref{def.ND.letk}. 
A  bivalued semantics  for \letfm\ is obtained by dropping clauses (v3) and (v7) from Definition~\ref{def-val-LETK}.   
 An Nmatrix  ${\mathcal{M}_{\letfm}}$ for \letfm\ is obtained 
 by dropping clause (iii) of Definition~\ref{defNmatLETK}, which produces the six-valued (non-deterministic)  truth-tables obtained just by dropping  the table for implication of \letk\ displayed in Subsection~\ref{tablesLETKP}.
 As expected,  the syntactic consequence ($ \vdash_{\letfm}  $) and the semantic consequence, defined by 
either the bivalued semantics   ($\models_{\letfm}^2$) or by the six-valued semantics ($ \models_{\mathcal{M}_{\letfm}}  $),  
are equivalent:  
\begin{theorem}  \label{prop.sound.comp.letfm}
$\Gamma\vdash_{\letfm} A$ \ if and only if \ $\Gamma\models_{\letfm}^2 A$ \ if and only if \ $\Gamma\models_{\mathcal{M}_{\letfm}} A$.
\end{theorem}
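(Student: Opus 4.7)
The plan is to mirror, step by step, the proofs of Theorems~\ref{adeq-LETK-bival}, \ref{sound-Nmat-LETK} and~\ref{comp-Nmat-LETK}, exploiting the fact that \letfm\ is obtained from \letk\ by deleting only the rules and the semantic clauses that mention implication. Since every remaining piece of the syntactic/semantic apparatus is defined independently of $\to$, the arguments carry over essentially verbatim.

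For the equivalence $\Gamma\vdash_{\letfm} A \ \Leftrightarrow \ \Gamma\models_{\letfm}^2 A$, the soundness direction is a routine induction on derivations: for each axiom or rule of \letfm\ (all of which are also rules of \letk) one checks that bivaluations as modified for \letfm\ satisfy the corresponding semantic clause. This is exactly the verification used in the proof of Theorem~\ref{adeq-LETK-bival}, restricted to clauses (v1), (v2), (v4), (v5), (v6), (v8). For completeness, one notes that \letfm\ is Tarskian and finitary, so the general Lindenbaum--\L o\'s result in Remark~\ref{Lindenbaum-Los} applies: from $\Gamma \nvdash_{\letfm} A$ one extracts an $A$-saturated set $\Delta \supseteq \Gamma$. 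The analogue of Proposition~\ref{F-sat-LETK} for \letfm\ is obtained by deleting items (4) and (7) from its statement, the remaining items being proved exactly as before. The characteristic function $\rho_\Delta$ then yields a bivaluation for \letfm\ separating $\Gamma$ from $A$.

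For the equivalence $\Gamma\models_{\letfm}^2 A \ \Leftrightarrow \ \Gamma\models_{\mathcal{M}_{\letfm}} A$, I would transcribe Propositions~\ref{lemma-sound-LETK} and~\ref{lemma-comple-LETK} in the reduced signature. Given a valuation $v$ over $\mathcal{M}_{\letfm}$, the function $\rho_v(A) := v(A)_1$ satisfies clauses (v1), (v2), (v4)--(v6), (v8) by the same projection computations used in Proposition~\ref{lemma-sound-LETK}, using that the multioperations on the first and second coordinates for $\land$, $\lor$, $\neg$ are literally the Boolean operations, while the restriction $z_3 \leq z_1 \sqcup z_2$ and $z_1 \sqcap z_2 \sqcap z_3=0$ on elements of $\textsc{B}_{\letk}$ guarantees (v8). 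Conversely, for a bivaluation $\rho$ for \letfm, one defines $v_\rho(A) := (\rho(A), \rho(\neg A), \rho(\cons A))$, which lies in $\textsc{B}_{\letk}$ by (v8), and checks that it respects the multioperations of $\mathcal{M}_{\letfm}$ for $\land$, $\lor$, $\neg$, $\cons$ exactly as in Proposition~\ref{lemma-comple-LETK}, since those verifications never invoked the semantic clauses for implication. Combining these two facts with the first equivalence, the theorem follows by two applications of the bridge valuation/bivaluation, mimicking Theorems~\ref{sound-Nmat-LETK} and~\ref{comp-Nmat-LETK}.

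No part of the plan presents a genuine obstacle; the only point that deserves mild care is bookkeeping, namely confirming that the proofs recycled from the \letk\ setting never implicitly invoked the presence of $\to$. Inspection of Propositions~\ref{F-sat-LETK}, \ref{lemma-sound-LETK} and~\ref{lemma-comple-LETK} shows that the clauses for $\land$, $\lor$, $\neg$, $\cons$ are handled in isolation from those for $\to$, so truncating the signature to $\Sigma_1$ neither invalidates the Lindenbaum-style construction nor breaks the coordinate-wise verification of (non-)deterministic behaviour.
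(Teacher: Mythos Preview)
Your proposal is correct and follows essentially the same approach as the paper, which explicitly states that the proof of Theorem~\ref{prop.sound.comp.letfm} ``can be easily adapted from the proofs of Theorems~\ref{adeq-LETK-bival}, \ref{sound-Nmat-LETK}, and~\ref{comp-Nmat-LETK}.'' Your added remark that the recycled arguments never implicitly invoke the implication clauses is the only point requiring verification, and it is exactly the kind of bookkeeping the paper leaves to the reader.
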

\mh The  proof of  Theorem~\ref{prop.sound.comp.letfm}  can be easily adapted 
from the proofs  of Theorems \ref{adeq-LETK-bival}, \ref{sound-Nmat-LETK}, and \ref{comp-Nmat-LETK}.   
    The Nmatrix $\mathcal{M}_{\letfm}$, of course, provides a decision procedure for \letfm. 

  \m 
  
We now turn to the logic \letfp. Recall from Remark~\ref{twist-LETKP}  that  $\mathcal{A}_6$ denotes the six-valued algebra  underlying the matrix $\mathcal{M}_6$ of \letkp. Let $\mathcal{A}_6^1$ be the six-valued algebra  underlying the matrix $\mathcal{M}_6^1$ obtained from $\mathcal{M}_6$ by removing the implication operator $\to$. Consider the valuation semantics for \letfp\ obtained from the one for \letkp\ by removing the clauses for implication $\to$. It is easy to see, by adapting the corresponding proofs for \letkp, that
\begin{theorem}  \label{prop.sound.comp.letfp}
$\Gamma\vdash_{\letfp} A$ \ if and only if \ $\Gamma\models_{\letfp}^2 A$ \ if and only if \ $\Gamma\models_{\mathcal{M}_6^1} A$.
\end{theorem}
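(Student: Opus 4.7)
The plan is to establish the two equivalences in the same three-step pattern already used for \letkp, but restricted to the $\to$-free signature $\Sigma_1$. Concretely, I would first prove the equivalence $\Gamma\vdash_{\letfp} A$ iff $\Gamma\models_{\letfp}^2 A$ by adapting Theorem~\ref{adeq-LETKP-bival}, and then prove the equivalence $\Gamma\models_{\letfp}^2 A$ iff $\Gamma\models_{\mathcal{M}_6^1} A$ by adapting Propositions~\ref{lemma-sound-LETKP} and~\ref{lemma-comple-LETKP}.

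For the first equivalence, note that a bivaluation for \letfp\ is a bivaluation for \letfm\ (that is, $\rho:For(\Sigma_1)\to\{0,1\}$ satisfying clauses (v1), (v2), (v4), (v5), (v6), (v8) of Definition~\ref{def-val-LETK}) plus the $\to$-free clauses (vp1)--(vp12) from Definition~\ref{val-sem-propag}. Soundness follows by checking, just as in Theorem~\ref{adeq-LETKP-bival}, that every instance of an axiom and every application of a rule of the $\to$-free natural deduction system for \letfp\ preserves truth under such bivaluations; none of the dropped rules or clauses is ever invoked. For completeness, I would use the Lindenbaum-\L o\'s construction (Remark~\ref{Lindenbaum-Los}) to extend any $\Gamma$ with $\Gamma\nvdash_{\letfp}A$ to an $A$-saturated set $\Delta$ in \letfp, and then restate and prove the $\to$-free analogue of Proposition~\ref{F-sat-LETKP} (dropping items (4), (7), (vp13$'$)--(vp17$'$)). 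The characteristic function $\rho_\Delta$ is then a bivaluation for \letfp\ separating $\Gamma$ from $A$.

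For the second equivalence, I would verify that for every valuation $v$ over $\mathcal{M}_6^1$, the mapping $\rho_v(B)=v(B)_1$ is a bivaluation for \letfp\ (mimicking Proposition~\ref{lemma-sound-LETKP} but only using the restricted multioperations $\tilde\land,\tilde\lor,\tilde\neg,\tilde\cons$), which gives soundness w.r.t.\ $\mathcal{M}_6^1$. Conversely, for every bivaluation $\rho$ for \letfp, the assignment $v_\rho(B)=(\rho(B),\rho(\neg B),\rho(\cons B))$ lands in $\textsc{B}_{\letk}$ by clause (v8) and is a valuation over $\mathcal{M}_6^1$; the verification follows Proposition~\ref{lemma-comple-LETKP} case by case for $\land$, $\lor$, $\neg$ and $\cons$, using the reformulated clauses (vp6)$'$, (vp7)$'$, (vp11)$'$, (vp12)$'$ of Remark~\ref{clause-refor}. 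Since $v_\rho(B)\in\textrm{D}$ iff $\rho(B)=1$, combining the two directions yields the equivalence with $\models_{\mathcal{M}_6^1}$.

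The only mildly delicate point is checking that the deterministic behaviour of $\tilde\land$, $\tilde\lor$, $\tilde\neg$, $\tilde\cons$ on $\mathcal{M}_6^1$ (as recorded in Proposition~\ref{prop.reasoning.6.val}) is faithfully mirrored by the $\to$-free fragment of the bivalued clauses — which it is, precisely because the derivation of these multioperations in Proposition~\ref{prop.reasoning.6.val} never uses clauses (vp13)--(vp17). No genuine obstacle is expected; the theorem is essentially a restriction of the \letkp\ case to a sub-signature in which the dropped connective plays no role, and the same Lindenbaum-\L o\'s argument plus case analysis over the four remaining connectives goes through verbatim.
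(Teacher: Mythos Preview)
Your proposal is correct and matches the paper's approach exactly: the paper simply states that the result follows ``by adapting the corresponding proofs for \letkp'', and your proposal spells out precisely this adaptation---dropping the $\to$-related rules, bivaluation clauses, and cases from Theorems~\ref{adeq-LETKP-bival}, \ref{sound-LETKP-matrix}, \ref{comple-LETKP-matrix} and Propositions~\ref{lemma-sound-LETKP}, \ref{lemma-comple-LETKP}. There is nothing to add.
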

\mh To generalize: given a twist structure $\mathcal{T}_{\mathcal{B}}$ for \letkp\ induced by a Boolean algebra  $\mathcal{B}$ (recall Definition~\ref{twist-BA}),  let $\mathcal{T}_{\mathcal{B}}^1$ be its implication-free reduct to $\Sigma_1$. Let $\mathcal{M}^1(\mathcal{B})$ be the induced Nmatrix as in the case of \letkp. Clearly,  $\mathcal{T}_{\mathcal{B}_2}^1$ is exactly $\mathcal{A}_6^1$, while $\mathcal{M}^1(\mathcal{B}_2)=\mathcal{M}_6^1$.  Let $Mat(\letfp)$ be the class of logical matrices of the form $\mathcal{M}^1(\mathcal{B})$, and let $\models_{Mat(\letfp)}$ be the associated consequence relation. Then, 
\begin{theorem}  \label{prop.sound.comp.swap.letfp} 
$\Gamma\vdash_{\letfp} A$ \ if and only if \ $\Gamma\models_{Mat(\letfp)} A$. 
\end{theorem}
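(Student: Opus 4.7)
The plan is to mirror the proof of Theorem~\ref{sound-compLETKPswap} for \letkp, now taking as our base case Theorem~\ref{prop.sound.comp.letfp} instead of the combination of Theorems~\ref{adeq-LETKP-bival} and~\ref{comple-LETKP-matrix}. The completeness direction is essentially free: since $\mathcal{M}^1(\mathcal{B}_2)=\mathcal{M}_6^1$, the matrix $\mathcal{M}_6^1$ belongs to $Mat(\letfp)$, so $\Gamma\models_{Mat(\letfp)} A$ immediately gives $\Gamma\models_{\mathcal{M}_6^1} A$, and Theorem~\ref{prop.sound.comp.letfp} yields $\Gamma\vdash_{\letfp} A$.

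For soundness, suppose $\Gamma\vdash_{\letfp} A$, with $\Gamma=\{A_1,\ldots,A_n\}$ finite (using finitarity of \letfp; the empty case is routine). By Theorem~\ref{prop.sound.comp.letfp}, $\Gamma\models_{\mathcal{M}_6^1} A$. The next step is to invoke the same Boolean-algebraic reduction as in Remark~\ref{formulas-terms}, restricted to $\Sigma_1$. To any $\Sigma_1$-formula $C(p_1,\ldots,p_k)$ and any valuation $v$ over $\mathcal{M}^1(\mathcal{B})$ we associate a Boolean term $\tau_C\in For_3^k(\Sigma_{BA})$ representing the first coordinate of $v(C)$, built only from the Boolean operations corresponding to $\tilde{\land},\tilde{\lor},\tilde{\neg},\tilde{\cons}$; crucially, this construction does not require the implication clause of Definition~\ref{twist-BA}, so the term $\tau_C$ is well-defined in exactly the same way.

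Setting $B=A_1\land\cdots\land A_n$ and recalling the snapshot-defining term $\bar{\tau}_k$, the relation $\Gamma\models_{\mathcal{M}_6^1} A$ translates into the fact that the equation $(\bar{\tau}_k \Rightarrow (\tau_B \Rightarrow \tau_A))\approx \top$ holds in $\mathcal{B}_2$. Since $\mathcal{B}_2$ generates the variety of Boolean algebras (Remark~\ref{val-Boole}(2)), this equation holds in every Boolean algebra $\mathcal{B}$. Given an arbitrary $\mathcal{B}$ and a valuation $v$ over $\mathcal{M}^1(\mathcal{B})$ with $v(B)\in\textrm{D}_{\mathcal{B}}$, the homomorphism $h:For_3^k(\Sigma_{BA})\to\mathcal{B}$ determined by $h(p_i^j)=v(p_i)_j$ satisfies $h(\bar{\tau}_k)=1$ (because each $v(p_i)$ is a snapshot) and $h(\tau_B)=1$, whence $h(\tau_A)=v(A)_1=1$, i.e.\ $v(A)\in \textrm{D}_{\mathcal{B}}$. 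This gives $\Gamma\models_{\mathcal{M}^1(\mathcal{B})} A$ for every $\mathcal{B}$, so $\Gamma\models_{Mat(\letfp)} A$.

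No step looks to be a real obstacle: the structural work was done in Theorem~\ref{sound-compLETKPswap}, and removing $\to$ only shrinks the set of Boolean terms involved without affecting the generator argument. The one point to check carefully is that the construction of $\tau_C$ in Remark~\ref{formulas-terms} goes through unchanged when restricted to $\Sigma_1$—which it does, because the clauses (i), (ii), (iv), (v) of Definition~\ref{twist-BA} coincide for $\mathcal{T}_{\mathcal{B}}$ and $\mathcal{T}_{\mathcal{B}}^1$.
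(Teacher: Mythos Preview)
Your proposal is correct and follows exactly the approach the paper intends: the paper states Theorem~\ref{prop.sound.comp.swap.letfp} without an explicit proof, immediately after introducing $\mathcal{T}_{\mathcal{B}}^1$ and $\mathcal{M}^1(\mathcal{B})$ as the $\to$-free reducts, so the implied argument is precisely the restriction of the proof of Theorem~\ref{sound-compLETKPswap} to $\Sigma_1$ that you carry out. Your observation that Remark~\ref{formulas-terms} goes through unchanged for $\Sigma_1$-formulas (since only clauses (i), (ii), (iv), (v) of Definition~\ref{twist-BA} are needed) is the one point worth making explicit, and you do so.
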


\subsection{\letfp\ and involutive Stone Algebras} \label{sectISAs}

By convenience, in this subsection we will consider that \letkp\ and \letfp\ are defined over a signature containing the constants $\top$ and $\bot$.

As mentioned in Remark~\ref{crystal}, the lattice structure {\bf L6} expanded by negation (that is, the De Morgan $\{\land,\lor,\neg\}$-reduct of $\mathcal{A}_6$) presented in Subsection~\ref{SectSix} coincides with (the De Morgan reduct of) the so-called Meyer's crystal lattice.

In this subsection it will be shown that the crystal lattice also appears in a different algebraic context. Indeed, a curious and unexpected close relationship between \letfp\ and a variety of algebras known as {\em Involutive Stone Algebras}  (ISAs, for short) can be established. Involutive Stone algebras are De Morgan algebras with an additional unary operator $\nabla$ satisfying some specific equations. 
That is, ISAs are algebras defined over the signature $\{\land,\lor,\neg,\nabla, \bot,\top\}$. The variety of ISAs was introduced by  Cignoli and  de Galego\footnote{Later, M. de Galego changed her name to M. Sagastume.} in~\cite{cig:gal:81} in the context of \L ukasiewicz-Moisil algebras.  
In~\cite{cig:gal:83} they prove that the variety of ISAs is generated by $\mathbb{S}_6$, a 6-element ISA  whose $\{\land,\lor,\neg\}$-reduct coincides with the lattice {\bf L6} (plus negation) of $\mathcal{A}_6$ displayed in Subsection~\ref{SectSix}; that is, (the De Morgan reduct of) the crystal lattice. In  $\mathbb{S}_6$ the $\nabla$ operator is given by $\nabla(a)=T$ if $a \neq F$, and $\nabla(F)=F$. Observe that, in the implication-free reduct $\mathcal{A}_6^1$ of  $\mathcal{A}_6$, $\nabla$ can be defined by means of the formula $\nabla A \defi A \vee  \neg \cons A$. On the other hand, it is clear that the formula $\cons A \defi \neg \nabla A \vee \neg \nabla \neg A$ defines in $\mathbb{S}_6$ the operator $\tilde{\cons}$. This shows that the six-valued algebra $\mathbb{S}_6$ is  equivalent in expressive power to the algebra $\mathcal{A}_6^1$ for \letfp.

 Cant\'u and   M. Figallo studied in~\cite{cant:fig:18} the logic-preserving degrees of truth of the variety of ISAs, which is defined as follows (taking into account that $\mathbb{S}_6$ generates the variety): $\Gamma \models_{\mathbb{S}_6}^\leq A$ iff either $v(A)=1$, for every valuation $v$ over $\mathbb{S}_6$, or there exist $A_1,\ldots,A_n \in \Gamma$ such that $v(A_1) \sqcap \ldots \sqcap v(A_n) \leq v(A)$, for every valuation $v$ over $\mathbb{S}_6$.   
They  prove in their Theorem~5.5 that the logic {\bf Six}  generated by the 4 matrices over $\mathbb{S}_6$ with the 
set of designated  values $\{\nei,T_0,T\}$,  $\{F_0,\nei,T_0,T\}$, $\{T_0,T\}$ and $\{T\}$ coincides with the logic-preserving degrees of truth of the variety of ISAs. On the other hand,  in~\cite{marc:riv:22}, Proposition~4.1, it was shown that  {\bf Six} can be characterized by the logical matrix over $\mathbb{S}_6$ with set of designated  values $\{\bo,T_0,T\}$. Based on this, in~\cite{cant:fig:22} Cant\'u and  M. Figallo apply a general method introduced by Avron and his collaborators to give a cut-free Gentzen system for {\bf Six}, which constitutes a (syntactic) decision procedure for this logic. It is worth noting that, as a consequence of the characterization of {\bf Six} in terms of the matrix induced by $\mathbb{S}_6$ with set of designated  values  $\{\bo,T_0,T\}$,  the following result is obtained straightforwardly:

\begin{proposition} \label{logi-ISA}
Let $\mathbb{I}\mathbb{S}\mathbb{A}$ be the variety of involutive Stone algebras presented in the signature $\{\land,\lor,\neg,\cons, \bot,\top\}$. Then, \letfp\ coincides with the logic-preserving degrees of truth of $\mathbb{I}\mathbb{S}\mathbb{A}$.\footnote{As observed above, \letfp\ was expanded with the  definable constants $\bot$ and $\top$.}
\end{proposition}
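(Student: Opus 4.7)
The plan is to chain together the characterizations of \letfp\ and of the degree-preserving logic of $\mathbb{I}\mathbb{S}\mathbb{A}$ via the common six-element matrix. First, I would make explicit the identification between the algebra $\mathcal{A}_6^1$ underlying $\mathcal{M}_6^1$ and the generator $\mathbb{S}_6$ of the variety $\mathbb{I}\mathbb{S}\mathbb{A}$. As noted in the excerpt, the $\{\land,\lor,\neg\}$-reduct of $\mathcal{A}_6^1$ coincides with the De Morgan reduct of $\mathbb{S}_6$ (both are the crystal lattice \textbf{L6} with its involutive negation). The interdefinability $\nabla A \defi A \vee \neg\cons A$ and $\cons A \defi \neg\nabla A \vee \neg\nabla\neg A$ then shows that $\mathcal{A}_6^1$ and $\mathbb{S}_6$ are term-equivalent as algebras; in particular, after interpreting $\cons$ in $\mathbb{S}_6$ via the above formula, they become literally the same algebra in the signature $\{\land,\lor,\neg,\cons,\bot,\top\}$.

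Next I would align the sets of designated elements. By Theorem~\ref{prop.sound.comp.letfp}, $\Gamma \vdash_{\letfp} A$ iff $\Gamma \models_{\mathcal{M}_6^1} A$, where the designated set of $\mathcal{M}_6^1$ is $\textrm{D} = \{T,T_0,\bo\}$. On the other hand, the cited result from~\cite{marc:riv:22}, Proposition~4.1, characterizes the logic \textbf{Six} as the consequence relation of the matrix over $\mathbb{S}_6$ whose designated set is exactly $\{\bo,T_0,T\}$. Under the identification of the previous paragraph, these two matrices coincide, so $\Gamma \vdash_{\letfp} A$ iff $\Gamma \models_{\textbf{Six}} A$.

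Finally, I would invoke Theorem~5.5 of~\cite{cant:fig:18}, according to which \textbf{Six} is precisely the degree-preserving logic of the variety of involutive Stone algebras. Combining the two equalities yields $\vdash_{\letfp}\, =\, \models_{\mathbb{I}\mathbb{S}\mathbb{A}}^\leq$, which is the claim.

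The main obstacle, and really the only non-bookkeeping step, is verifying the term-equivalence between $\mathcal{A}_6^1$ and $\mathbb{S}_6$: one must check that the definitions $\nabla A \defi A \vee \neg\cons A$ and $\cons A \defi \neg\nabla A \vee \neg\nabla\neg A$ are mutually inverse on the six elements, i.e., that applying one definition and then the other reproduces the original operator on each of the values $T, T_0, \bo, \nei, F_0, F$. This is a finite, routine computation using the tables in Definition~\ref{def.six.val.LETKP} and the description of $\nabla$ in $\mathbb{S}_6$, but it is the one point where something genuinely has to be verified rather than merely quoted.
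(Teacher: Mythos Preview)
Your proposal is correct and follows exactly the route the paper takes: the paper also derives the result ``straightforwardly'' by combining the term-equivalence of $\mathcal{A}_6^1$ and $\mathbb{S}_6$ (via the interdefinability of $\nabla$ and $\cons$), the characterization of \letfp\ by $\mathcal{M}_6^1$ (Theorem~\ref{prop.sound.comp.letfp}), Proposition~4.1 of~\cite{marc:riv:22} identifying {\bf Six} with the matrix $\langle \mathbb{S}_6, \{\bo,T_0,T\}\rangle$, and Theorem~5.5 of~\cite{cant:fig:18} identifying {\bf Six} with the degree-preserving logic of $\mathbb{I}\mathbb{S}\mathbb{A}$. Your explicit mention of the mutual-inverse check for the two term definitions is the one verification the paper leaves implicit, and it is indeed a routine finite computation on the six values.
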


\begin{remark} \label{Six}  \ \\ 
(1) It is worth noting that \letkp\ {\em is not} the degree-preserving  expansion of the logic {\bf Six} 
obtained by adding an implication. Indeed, the logic of \letkp\ is not the logic-preserving degrees of truth of  $\mathcal{A}_6$ (the expansion of $\mathbb{S}_6$ with the implication $\tilde{\to}$) given that, for instance, $\neg(A \to B) \vdash_{\letkp} A$, but    $\neg(A \to B) \not\models_{\mathcal{A}_6}^\leq A$, where $\Gamma \models_{\mathcal{A}_6}^\leq A$ iff, either $v(A)=1$, for every valuation $v$ over $\mathcal{A}_6$, or there exist $A_1,\ldots,A_n \in \Gamma$ such that $v(A_1) \sqcap \ldots \sqcap v(A_n) \leq v(A)$, for every valuation $v$ over $\mathcal{A}_6$. In fact, it is enough to consider a valuation $v$ such that $v(A)=\bo$ and $v(B)=\nei$ (which is perfectly possible when $A$ and $B$ are two propositional variables). In this case, $v(\neg(A \to B))=v(A \to B)=\bo \,\tilde{\to}\, \nei=\nei \not\leq \bo= v(A)$, hence   $\neg(A \to B) \not\models_{\mathcal{A}_6}^\leq A$.\\[1mm]
(2) The algebrization of \letkp\ obtained  in Section~\ref{sect-BPalg} shows that there is an additional difference with its implication-free fragment \letfp. Indeed, as shown in~\cite[Proposition~4.2]{marc:riv:22},  the logic  {\bf Six} 
 is not algebraizable, although it is selfextensional. By Proposition~\ref{logi-ISA}, the same holds for \letfp. On the other hand \letkp, despite being algebraizable, it is not selfextensional: indeed, $\neg(A \to B)$ is equivalent to $A \land \neg B$ but $\neg\neg(A \to B)$ is not equivalent to $\neg(A \land \neg B)$. In fact, while the former is equivalent to $A \to B$, the latter is equivalent to $\neg A \vee B$, and clearly these formulas are inequivalent in \letkp.
Despite these differences, it would be interesting to analyze \letkp\ and \letfp\ with relation to  involutive Stone algebras.
\\[1mm]
(3) In~\cite{gomes:etal:2022}  was proposed the study of  expansions of De Morgan algebras by means of an operator $\circ$ which, among several properties, is able to simultaneously recover   explosion  and excluded middle w.r.t. the De Morgan negation (that is, a classicality operator). They show that the degree-preserving logic associated to these algebras coincides, up to language, with {\bf Six} (and so with \letfp, by Proposition~\ref{logi-ISA}). This means that the algebraic conditions required in~\cite{gomes:etal:2022} for the operator $\circ$ turn  out to be equivalent to the propagation conditions required for $\circ$ in \letfp\ (it is worth noting that no implication connective was considered in~\cite{gomes:etal:2022}). Observe that the purpose of~\cite{gomes:etal:2022} is closely related to that of  \lets~\cite{kolkata,letj,letf}, that is, to define logics based on \fde, the logical counterpart of De Morgan algebras, expanded  with a classicality operator; however, in~\cite{gomes:etal:2022} this is done from an algebraic perspective.
\end{remark}

 We close the paper by pointing out that, by Remark~\ref{formulas-terms}, decidability in \letkp\ can be reduced 
 to checking validity in classical propositional  logic \cpl; in particular, the same holds for \letfp. The latter means that the degree-preserving logic of Involutive Stone Algebras can also be decided by means of standard 2-valued truth-tables.

\

\subsection*{Acknowledgements} The authors acknowledge support from the National Council for Scientific and Technological Development (CNPq,~Brazil), research grants 306530/2019-8 and 310037/2021-2.
The second author  also acknowledges support from Minas Gerais State Agency for Research and Development (FAPEMIG, Brazil), research grant APQ-02093-21.

\

\end{document}